\documentclass[leqno,12pt]{article} 
\usepackage{color}
\usepackage{amsmath,amsfonts,amssymb,amsthm,amscd}
\usepackage{hyperref}
\usepackage{wrapfig}
\font\cmssl=cmss10 at 12 pt

\usepackage{amsmath}
\usepackage{amssymb}
\usepackage{amsthm}
\usepackage{pb-diagram}
\usepackage{ascmac}
\usepackage{ulem}

\newtheorem{theorem}{Theorem}[section]
\newtheorem{proposition}[theorem]{Proposition}
\newtheorem{lemma}[theorem]{Lemma}
\newtheorem{corollary}[theorem]{Corollary}
\newtheorem{remark}[theorem]{Remark}
\newtheorem{example}[theorem]{Example}
\newtheorem{definition}[theorem]{Definition}

\begin{document}
\title{
Quaternionic complex manifolds and \\ fixed-point sets of $S^{1}$-actions
}
\author{%
Kazuyuki Hasegawa
}

\maketitle
\begin{abstract}
In this paper, we study fixed-point sets of $S^{1}$-actions and compatible complex structures 
on quaternionic manifolds.
We obtain an equation involving the first Chern classes of the fixed-point set and of
a quaternionically flat manifold with compatible complex structure of closed type. 
In addition, if the first Chern class of the fixed-point set is not trivial, then 
the quaternionic manifold 
does not admit hypercomplex structures containing given compatible complex structure
on any open set containing the fixed-point set. 
Moreover, we determine the connected components of the fixed-point set arising 
from quaternionic $S^{1}$-actions on the quaternionic projective space.
Consequently, 
we generalize Pontecorvo's example 
$\mathrm{SO}^{\ast}(2n+2)/\mathrm{SO}^{\ast}(2n) \times \mathrm{SO}^{\ast}(2)$. 
\\

\noindent
2020 Mathematics Subject Classification: Primary 53C10, Secondary 53C26\\
Keywords: compatible complex structure, twistor function, quaternionic complex manifold, transversally complex submanifold. \\

\end{abstract}

\tableofcontents

\section{Introduction}
\setcounter{equation}{0}

A quaternionic manifold $M$ is a smooth manifold equipped 
with a subbundle $Q$ of the endmorphism bundle of $TM$ 
which is locally generated by three almost complex structures satisfying 
the quaternionic relations and there exists a torsion-free affine connection 
preserving $Q$. Such a connection is called a quaternionic connection. 
Note that a quaternionic connection is not unique for $Q$. 
Several papers study compatible (almost) complex structures with a given $Q$ and  
quaternionic manifolds admitting such complex structures (see \cite{AMP, Bor} for example). 
The fundamental questions are 
\begin{enumerate}
  \item Are there compatible (globally defined) complex structures on $(M,Q)$?,  
  \item If there exists a compatible complex structure $I$,  
  is it possible to extend $I$ to a  (locally) hypercomplex structure $(I_{1},I_{2},I_{3})$ such that $I_{1}=I$? 
\end{enumerate}
In particular, Joyce \cite{J} introduces a quaternionic complex manifold, defined 
as a quaternionic manifold equipped with a nowhere vanishing twistor function. 
The twistor function is defined as a section lying in the kernel of the twistor operator. 
In this setting, there exists a unique quaternionic connection 
preserving both a distinguished compatible complex structure and a volume form.
Consequently, the holonomy group of this connection is contained in
$\mathrm{SL}(n,\mathbb{H}) \cdot \mathrm{U}(1)$, where $4n=\dim M$. 
This group appears in the Berger's list of possible holonomies of torsion-free connections 
(see \cite{Berger,MS}). Hitchin \cite{Hit} gives examples generated from 
quaternionic K\"ahler or hyperK\"ahler manifolds with an $S^{1}$-action.

Related to the problems as above, 
Pontecorvo \cite{P} shows that 
\[ \mathrm{SO}^{\ast}(2n+2)/\mathrm{SO}^{\ast}(2n) \times \mathrm{SO}^{\ast}(2) 
(\subset \mathbb{H}P^{n}) \]
is a manifold with the compatible complex structure which is not locally hypercomplex  
by using the twistor space. Note that there exists no almost complex structure on $\mathbb{H}P^{n}$. 
Hitchin \cite{Hit} points out that this manifold is given by the twistor(moment) function on $\mathbb{H}P^{n}$ 
with the $S^{1}$-action. 
The fixed-point set of this $S^{1}$-action on $\mathbb{H}P^{n}$ is $\mathbb{C}P^{n}$, which is contained in 
$\mathrm{SO}^{\ast}(2n+2)/\mathrm{SO}^{\ast}(2n) \times \mathrm{SO}^{\ast}(2)$. 
Hence, this motivates us to study fixed-point sets of a quaternionic $S^{1}$-action. 
In the quaternionic Feix-Kaledin construction \cite{BC}, 
the c-projective structure of the complex manifold,  
which arises as one of the connected components of the fixed-point set 
of the constructed quaternionic manifold under the $S^{1}$-action,  
plays an essential role. 
From this perspective, the study of the fixed-point sets of 
$S^{1}$-actions on quaternionic manifolds is also of interest.

We show that a connected component of the fixed-point set 
of a quaternionic $S^{1}$-action on a quaternionic manifold is a 
quaternionic or complex submanifold (Theorem \ref{thm_fixpts}). Note that a quaternionic submanifold
is totally geodesic with respect to any quaternionic connection and a complex submanifold which is the  
fixed-point set is totally geodesic with respect to a specific quaternionic connection. 
In particular, we prove that a connected component of the fixed-point set 
of a quaternionic $S^{1}$-action on $\mathbb{H}P^{n}$ 
is $\mathbb{H}P^{k}$ ($k \leq n$) or  
$\mathbb{C}P^{l}$ ($l \leq n$) (Theorem \ref{fix_hp_n}). 
The proof is done geometrically, and topologically, 
by \cite[Theorem 5.2 in Chapter VII]{Bre}, the fixed-point sets 
of a continuous $S^{1}$-actions on $\mathbb{H}P^{n}$
have the same integral 
cohomology as $\mathbb{H}P^{k}$ or as $\mathbb{C}P^{l}$.

In Riemannian submanifold geometry, the fixed-point set of an isometric action is a totally geodesic submanifold.
Although a conformal change of the ambient metric does not preserve the property of being totally geodesic, it does preserve the totally umbilic condition.
For this reason, totally umbilic submanifolds are the most fundamental and important objects in conformal submanifold geometry. 
The conformal geometry of $S^{4}$ can be viewed as the quaternionic geometry of $\mathbb{H}P^{1}$, and 
its higher-dimensional analogue is the quaternionic geometry of $\mathbb{H}P^{n}$.  
In quaternionic submanifold geometry (see \cite{Tsu}), 
it is shown that the $(2,0)+(0,2)$-part of the second fundamental form is independent of 
the choice of a quaternionic connection. 
Moreover, $\mathbb{C}P^{n}$ is characterized as 
a transversally complex submanifold in $\mathbb{H}P^{n}$ with vanishing 
$(2,0)+(0,2)$-part of the second fundamental form. 
Thus, $\mathbb{C}P^{n}$ plays a role analogous to that of
a totally umbilic submanifold in conformal geometry. 
From this perspective, it is also natural and important to study fixed-point sets of 
$S^{1}$-actions within the framework of quaternionic submanifold geometry.

We give detailed and elementary descriptions of twistor functions and quaternionic complex manifolds, reflecting their significant roles in this paper, in Sections \ref{sec_twist_fun} and \ref{sec_mu_conn}.
As observed in \cite{S,J}, in order to define a twistor operator which is independent of the choice of 
a quaternionic connection, one must consider an appropriate tensor product of 
$Q$ with a real line bundle of suitable weight.
We provide an explicit expression for this operator, and in particular, following \cite{AM2}, we describe the Ricci curvature of the distinguished quaternionic connection. 
Moreover, we express the first Chern class of a quaternionic complex manifold 
in terms of this Ricci curvature. 
We introduce the compatible complex structure of closed type on a quaternionic manifold 
(see Definition \ref{def_close_type}), which gives  
a collection of quaternionic complex manifolds and twistor functions on the quaternionic manifold (Proposition \ref{th_2a}). 
We obtain an equation involving the first Chern classes of the fixed-point set and of 
a quaternionically flat manifold with compatible complex structure of closed type (Theorem \ref{q_flat_fix}). 
This directly can be applied to the case of 
a quaternionic complex manifold. 
Indeed, in Section \ref{sec_hp_n}, we apply our results to $\mathbb{H}P^{n}$ 
to illustrate Pontecorvo's example
$\mathrm{SO}^{\ast}(2n+2)/\mathrm{SO}^{\ast}(2n) \times \mathrm{SO}^{\ast}(2)$ 
building on Hitchin's description in \cite{Hit2}. 
As shown in \cite[Theorem 3.9]{P}, there exists no locally hypercomplex structure containing 
the distinguished compatible complex structure. 
In the present paper, we prove that no open neighborhood of the fixed-point set 
admits a hypercomplex structure. 
Since this manifold is simply connected, no locally hypercomplex structure exists 
(see Example \ref{ex_qp1}).

We discuss the complex two-plane Grassmann manifold $Gr(2,n+1)$ in Section \ref{sec_weight_grass}. 
There is no compatible almost complex structure on $Gr(2,n+1)$. 
Nevertheless, there exists a compatible complex structure on 
$Gr(2,n+1) \setminus Gr(2,n)$. Indeed, this open manifold carries the quaternionic complex structure 
induced by a twistor function. However, this compatible complex structure cannot be extended 
to a hypercomplex structure (Example \ref{ex_gr1}).

\section{A fixed-point set of a quaternionic $S^{1}$-action}
\setcounter{equation}{0}

Throughout this paper, 
all manifolds are assumed to be smooth and without boundary 
and maps are assumed to be smooth 
unless otherwise mentioned. The space of sections of a vector bundle $E\rightarrow M$ 
is denoted by $\Gamma(E)$.

\subsection{A quaternionic manifold and its complex submanifold}

In dimensions greater than four, 
we say that $M$ is a {\cmssl quaternionic 
manifold} with {\cmssl quaternionic structure} $Q$ if 
$Q$ is a subbundle of $\mathrm{End}(T M)$ 
of rank $3$ which at every point $x\in M$ is spanned by endomorphisms  
$I_{1}$, $I_{2}$, $I_{3}\in \mathrm{End} (T_x M)$ satisfying
\begin{eqnarray}\label{quaternionic}
I_{1}^{2}=I_{2}^{2}=I_{3}^{2}=-\mathrm{id}, \,\, I_{1} I_{2}=-I_{2}I_{1}=I_{3}, 
\end{eqnarray}
and there exists a torsion-free connection $\nabla$ on $M$ 
such that $\nabla$ preserves $Q$, that is, 
$\nabla_X \Gamma(Q) \subset \Gamma(Q)$ for all $X\in\Gamma (TM)$. 
Such a torsion-free connection 
$\nabla$ is called a {\cmssl quaternionic connection} and 
the triplet $(I_{1}$, $I_{2}$, $I_{3})$ is called an 
{\cmssl admissible frame} of $Q$ at $x$. 
Note that we use the same letter $\nabla$ for the connection on ${\rm End}(T M)$ 
induced by $\nabla$. In four dimensions, we make the special definition that 
a quaternionic manifold is a self-dual conformal manifold.    
The dimension of the quaternionic manifold $M$ is denoted by $4n$. 
Note that a quaternionic connection is not unique, in fact, 
there is the following result \cite{Fu,AM1}. 

\begin{lemma}\label{q_conn_eq}
Let $\nabla^{1}$ and $\nabla^{2}$ be quaternionic connections on $(M,Q)$. Then there exists a 1-form $\xi$ on $M$ such that 
\begin{eqnarray}\label{q_conn}
\nabla^{2}_{X} Y = \nabla^{1}_{X} Y +S^{\xi}_{X} Y
\end{eqnarray}
for all $X$, $Y \in \Gamma(T M)$, where $S^{\xi}$
is defined by 
\begin{align*}
S^{\xi}_{X}Y =\; 
& \xi(X)Y + \xi(Y)X - \xi(I_{1}X)I_{1}Y - \xi(I_{1}Y)I_{1}X \\
   & - \xi(I_{2}X)I_{2}Y - \xi(I_{2}Y)I_{2}X - \xi(I_{3}X)I_{3}Y - \xi(I_{3}Y)I_{3}X. 
\end{align*}
Conversely, for a given quaternionic connection $\nabla^{1}$, the connection 
$\nabla^{2}$ given by the equation above is also a quaternionic connection. 
\end{lemma}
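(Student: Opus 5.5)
The plan is to study the difference tensor $A = \nabla^{2}-\nabla^{1}$, which is a $(1,2)$-tensor field. Both connections are torsion-free, so $A$ is symmetric: $A_XY=A_YX$. Both connections preserve $Q$, so for each $X$ the endomorphism $A_X$ satisfies $[A_X,\Gamma(Q)]\subset\Gamma(Q)$. Concretely, for an admissible frame we have
\[
[A_X, I_a] = \sum_b c_{ab}(X) I_b
\]
with $c_{ab}$ skew-symmetric in $a,b$. This holds because $A_X$ acts on $Q$ through the derivation $J\mapsto [A_X,J]$, and this derivation preserves the relations \eqref{quaternionic} modulo $Q$, hence lies in $\mathfrak{so}(3)$. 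Therefore $A_X$ lies pointwise in the normalizer $\mathfrak{gl}(n,\mathbb{H})\oplus\mathfrak{sp}(1)$ of $Q$. The problem thus reduces to linear algebra at a point: \emph{classify the symmetric tensors $A\in V^{*}\otimes\mathfrak{gl}(n,\mathbb{H})\mathfrak{sp}(1)$ with $V=\mathbb{H}^n$}. This space is the first prolongation of $\mathfrak{gl}(n,\mathbb{H})\mathfrak{sp}(1)$.

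The key step is to show that this prolongation is isomorphic to $V^{*}$, via $\xi\mapsto S^{\xi}$. For the easy direction, I will check that $S^{\xi}$ is symmetric (obvious from the formula) and that $S^{\xi}_X$ normalizes $Q$. To see the latter, I will write
\[
S^{\xi}_X = \xi(X)\mathrm{id} + X\otimes\xi - \sum_a \xi(I_aX)I_a - \sum_a I_aX\otimes (\xi\circ I_a).
\]
The pieces $\xi(X)\mathrm{id}$ and $\xi(I_aX)I_a$ lie in $\mathbb{R}\oplus\mathfrak{sp}(1)$. The remaining piece $Y\mapsto \xi(Y)X-\sum_a\xi(I_aY)I_aX$ equals $Y\mapsto h(Y)X$, where $h(Y)=\xi(Y)-\sum\xi(I_aY)I_a$ is "$\mathbb{H}$-valued". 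This map commutes with each $I_b$ up to right quaternionic multiplication, so it lies in $\mathfrak{gl}(n,\mathbb{H})$ (quaternion-linear maps). Verifying this commutation is a direct computation using \eqref{quaternionic}. Once it is established, the converse statement of the lemma follows, since adding a symmetric $Q$-preserving tensor to a quaternionic connection keeps it torsion-free and $Q$-preserving.

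For the forward direction, I need the dimension of the prolongation to be at most $4n$ (for $n\ge2$, which is the case here since $\dim M>4$). I will compute it as follows. First, the prolongation of $\mathfrak{gl}(n,\mathbb{H})$ alone is zero for $n\ge 2$. Writing $A_X = B_X + \sum_a \theta_a(X) I_a$ with $B_X$ quaternion-linear and applying symmetry, I will show that $A$ is determined by the 1-forms $\theta_a$, and more precisely by a single 1-form $\xi$ with $\theta_a=-\xi\circ I_a$ up to normalization. The argument is to symmetrize, contract with suitable $I_b$, and take traces to extract $\xi$. Subtracting $S^{\xi}$ then leaves a symmetric tensor with values in $\mathfrak{gl}(n,\mathbb{H})$ (or lower), and this tensor must vanish by the $n\ge2$ vanishing. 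This is exactly where I expect the main obstacle: identifying the correct trace that produces $\xi$, and proving vanishing of the $\mathfrak{gl}(n,\mathbb{H})$-prolongation. For the latter I would use the standard argument that $A_XY$ is quaternion-linear in $Y$ and symmetric, hence quaternion-linear in both arguments. Then $A_{I_1X}I_2Y=I_2A_{I_1X}Y=I_2I_1A_XY$, while also $A_{I_1X}I_2Y=I_1A_{X}I_2Y=I_1I_2A_XY$. Comparing gives $A=0$ by anticommutativity.

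Finally, the pointwise $\xi$ is uniquely determined, since $\xi\mapsto S^\xi$ is injective by taking a trace. It depends smoothly on $A$ because it is given by a trace formula, so $\xi$ is a global smooth 1-form independent of the choice of admissible frame, and \eqref{q_conn} holds.
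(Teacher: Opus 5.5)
The paper gives no proof of this lemma; it quotes it from \cite{Fu,AM1}, where it is obtained exactly as you propose, by identifying the first prolongation of $\mathfrak{gl}(n,\mathbb{H})\oplus\mathfrak{sp}(1)$ with $\{S^{\xi}\}$. Several parts of your outline are correct as written. The reduction to $A_X\in Q\oplus Z(Q)$ is fine. So is the converse, with one correction: the map $Y\mapsto \xi(Y)X-\sum_a\xi(I_aY)I_aX$ commutes with each $I_b$ exactly, not merely ``up to right multiplication''. Your proof that the first prolongation of $\mathfrak{gl}(n,\mathbb{H})$ vanishes is correct and in fact holds for every $n\ge 1$. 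Recovering $\xi$ from $\mathrm{Tr}\,S^{\xi}_X=4(n+1)\xi(X)$ is also correct.

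The gap is the one step that carries the content of the forward direction. You must show that symmetry forces $\theta_a=-\xi\circ I_a$ for a single $\xi$, i.e.\ $\theta_1\circ I_1=\theta_2\circ I_2=\theta_3\circ I_3$. You only announce this (``up to normalization'') and flag it as the obstacle, so as written the forward direction is not proved. It does close with a trace, along the lines you suggest.

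First compute $A_X(I_bY)$ as $I_bB_XY+\sum_a\theta_a(X)I_aI_bY$, using $[B_X,I_b]=0$. Then compute it again, by symmetry, as $A_{I_bY}X$. Eliminating $B_XY$ through $A_XY=A_YX$ gives an identity of endomorphisms in $X$:
\[
B_{I_bY}-I_bB_Y+\sum_a\theta_a(I_bY)I_a-\sum_a\theta_a(Y)I_bI_a=\Big(X\mapsto \sum_a\theta_a(X)[I_a,I_b]Y\Big).
\]
Apply $\mathrm{Tr}(I_b\circ\,\cdot\,)$ to both sides. Use $\mathrm{Tr}(I_cB)=0$ for $B\in Z(Q)$ (conjugate by an $I_d$ anticommuting with $I_c$), $\mathrm{Tr}(I_bI_a)=-4n\delta_{ab}$ and $\mathrm{Tr}\,I_a=0$. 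Writing $\phi_a:=\theta_a\circ I_a$, this yields
\[
\mathrm{Tr}\,B_Y=4n\,\phi_b(Y)+2\phi_{b'}(Y)+2\phi_{b''}(Y),\qquad \{b,b',b''\}=\{1,2,3\}.
\]
Comparing two values of $b$ gives $(4n-2)(\phi_a-\phi_b)=0$, hence $\phi_1=\phi_2=\phi_3=:\xi$ and $\theta_a(X)=-\xi(I_aX)$. This is precisely the $Q$-component of $S^{\xi}_X$. Then $A-S^{\xi}$ is symmetric with values in $Z(Q)$ and vanishes by your argument. As a consistency check, for $A=S^{\xi}$ one has $\mathrm{Tr}\,B=4(n+1)\xi$, in agreement with the formula.
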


An {\cmssl almost hypercomplex manifold} is defined to be a manifold $M$ endowed 
with 3 almost complex structures 
$I_{1}$, $I_{2}$, $I_{3}$ satisfying the quaternionic relations (\ref{quaternionic}). 
If $I_{1}$, $I_{2}$, $I_{3}$ are integrable, then $M$ is called 
a {\cmssl hypercomplex manifold}. 
There exists a unique torsion-free connection on a hypercomplex manifold 
for which the hypercomplex structures are parallel. 
It is called the {\cmssl  Obata connection} \cite{O}. 
Obviously, hypercomplex manifolds are quaternionic manifolds with 
$Q=\langle I_{1},I_{2},I_{3} \rangle$. 
We say that $M$ is {\cmssl locally hypercomplex} if, 
for every point $x \in M$, 
there exists an open neighborhood $U$ of $x$ 
on which a hypercomplex structure $(I_{1},I_{2},I_{3})$ is defined.
Let $(\, \cdot \, , \, \cdot \,)$ be a metric on $Q$ defined by 
\[ ( A,B )=-\frac{1}{4n} \mathrm{Tr}AB \]
for $A,B \in Q$. 
We set $\| A \|=\sqrt{( A,A )}$ for $A \in Q$. 
Note that any admissible frame is orthonormal frame and 
any quaternionic connection is a metric connection with respect to this metric.   
For a  $(0,2)$-tensor $\theta$, we define the $(0,2)$-tensor $\Pi_{h}(\theta)$ by 
\[ \Pi_{h}(\theta)(X,Y)=\frac{1}{4} \left( \theta(X,Y)+\sum_{\alpha=1}^{3} 
\theta(I_{\alpha}X,I_{\alpha}Y) \right)
\]
for $X,Y \in TM$. We also define a tensor $B^{\nabla}$ on a quaternionic manifold  $(M,Q)$ by 
\begin{align*}
 B^{\nabla}=\frac{1}{4(n+1)} (Ric^{\nabla})^{a}+\frac{1}{4n}(Ric^{\nabla})^{s}
-\frac{1}{2n(n+2)} \Pi_{h} (Ric^{\nabla})^{s}
\end{align*}
for a quaternionic connection $\nabla$, where $Ric^{\nabla}$ is the Ricci tensor of $\nabla$ and 
$(Ric^{\nabla})^{s}$ (resp. $(Ric^{\nabla})^{a}$) is the symmetric (resp. skew-symmetric) 
part of $Ric^{\nabla}$. 
The {\cmssl Weyl tensor} 
\begin{align*}
 W:=R^{\nabla}-R^{B^{\nabla}}
\end{align*}
does not depend on a particular choice $\nabla$ 
(see \cite{AM1}), that is, 
it is invariant of $Q$. Here $R^{\nabla}$ is the curvature of $\nabla$ and $R^{B^{\nabla}}$ is defined by 
\begin{align*}
 R^{B^{\nabla}}_{X,Y}=S^{B^{\nabla}(Y, \, \cdot \,)}_{X}-S^{B^{\nabla}(X, \, \cdot \, )}_{Y}
\end{align*}
for $X$, $Y \in TM$.

A submanifold $N$ in a quaternionic manifold $(M,Q)$ is said to be an {\cmssl almost complex submanifold}
if there exists a section $I$ of $Q|_{N}$ such that 
\[ I^{2}=-\mathrm{Id} \,\,\, \mbox{and} \,\,\, I(TN) \subset TN. \]   
The almost complex structure on $N$ can be induced by $I$. In addition, if the 
induced almost complex structure is integrable, then $N$ is called a {\cmssl complex submanifold}. 
An almost complex submanifold $N$ is called a {\cmssl transversally complex submanifold} if 
\[ T_{x}N \cap L(T_{x}N) = \{ 0 \} \] 
at each point $x \in N$ for all element $L$ of orthogonal complement of 
$\langle I \rangle$ with respect to $(\, \cdot \, , \, \cdot \,)$.      
Note that the induced almost complex structure on a transversally complex submanifold 
is integrable. We refer to \cite{Tsu} for transversally complex submanifolds.

\subsection{A fixed-point set}

We start with the following lemma, which is well known 
in the case of an isometric action (see \cite{K}).

\begin{lemma}\label{fix_point}
Let $(M,\nabla)$ be a manifold with a torsion-free affine connection $\nabla$, and let  
$X$ be a vector field which generates one-dimensional subgroup $A$
of affine transformations of $(M,\nabla)$. 
We denote a connected component of the fixed-point set of $A$ by $F$. 
Then we have 
\begin{enumerate}
\item $T_{x}F = \mathrm{Ker}(\nabla X)_{x}$ for all $x \in F$, 
\item $F$ is totally geodesic in $(M,\nabla)$.  
\end{enumerate}
\end{lemma}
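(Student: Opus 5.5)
The approach follows the classical argument for isometries (Kobayashi), with the Riemannian exponential map replaced by the exponential map of the affine connection $\nabla$. Write $\varphi_t$ for the one-parameter group $A$ generated by $X$. Each $\varphi_t$ is an affine transformation, so it commutes with the exponential map: $\varphi_t(\exp_x v) = \exp_{\varphi_t(x)} (d\varphi_t)_x v$. At a fixed point $x \in F$ this gives $\varphi_t(\exp_x v)=\exp_x((d\varphi_t)_x v)$, so $\{(d\varphi_t)_x\}$ is a one-parameter group of linear maps of $T_xM$.

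First I compute its generator. Since $x$ is a zero of $X$ and $\nabla$ is torsion-free, we have $\mathcal{L}_X Y = \nabla_X Y - \nabla_Y X$. Evaluated at $x$ this is $-(\nabla_Y X)_x$, because $X_x=0$. Hence
\[
\frac{d}{dt}\Big|_{t=0}(d\varphi_{-t})_x Y_x = (\mathcal{L}_XY)_x = -(\nabla X)_x Y_x ,
\]
and therefore $(d\varphi_t)_x = \exp(t(\nabla X)_x)$. Consequently $V_x := \mathrm{Ker}(\nabla X)_x$ is exactly the set of vectors fixed by every $(d\varphi_t)_x$. Indeed, $\exp(tB)v=v$ for all $t$ forces $Bv=0$ after differentiating at $t=0$, and the converse is clear.

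Next I identify $F$ near $x$ with $\exp_x(V_x\cap U)$, where $U$ is a neighbourhood of $0$ on which $\exp_x$ is a diffeomorphism onto a normal neighbourhood $W$. If $v\in V_x\cap U$, then $\varphi_t(\exp_x v)=\exp_x v$, so $\exp_x v$ is fixed. Conversely, suppose $y=\exp_x v\in W$ is fixed by all $\varphi_t$. Then $\exp_x((d\varphi_t)_x v)=\exp_x v$. Provided $(d\varphi_t)_x v$ stays in $U$, injectivity gives $(d\varphi_t)_x v=v$, so $v\in V_x$.

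The main obstacle is precisely this proviso. In the Riemannian case $(d\varphi_t)_x$ is orthogonal, so it preserves balls. Here I would instead use that $A$ is a one-dimensional group acting by linear maps $\exp(t(\nabla X)_x)$. When $A\cong S^1$, which is the case of interest in the paper, the group $\{(d\varphi_t)_x\}$ is compact. One can then choose an $A$-invariant inner product on $T_xM$ by averaging and take $U$ to be a small ball for it, so $U$ is invariant. For a general one-dimensional group I would take the closure of $A$ in the affine group, or argue by continuity in $t$: the set of $t$ with $(d\varphi_t)_xv=v$ is closed, and it is open by local injectivity, since the curve $t \mapsto (d\varphi_t)_x v$ stays inside the finite set $\exp_x^{-1}(y)\cap U$ near $t$. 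This connectedness argument avoids compactness altogether, and it is the version I would write.

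With the identification in hand, $F\cap W=\exp_x(V_x\cap U)$ is a submanifold with $T_xF=V_x=\mathrm{Ker}(\nabla X)_x$, which proves 1. For 2, let $v\in T_xF=V_x$. The geodesic $\gamma(s)=\exp_x(sv)$ is mapped by $\varphi_t$ to the geodesic with initial vector $(d\varphi_t)_xv=v$, that is, to itself. So $\gamma$ consists of fixed points and lies in $F$ on its whole interval of definition, by connectedness. Thus geodesics tangent to $F$ stay in $F$, and $F$ is totally geodesic.
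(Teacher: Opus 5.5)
Your proof is correct. It is essentially Kobayashi's argument for isometries, transplanted to affine maps. It shares one step with the paper, the inclusion $\mathrm{Ker}(\nabla X)_x\subset T_xF$: there the paper likewise shows $\varphi_s(\exp(tY))=\exp(tY)$, deducing $\varphi_{s\ast}Y=Y$ from $[X,\bar Y]_x=0$. That is your formula $(d\varphi_t)_x=\exp(t(\nabla X)_x)$ in infinitesimal form.

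Otherwise the routes differ. For $T_xF\subset\mathrm{Ker}(\nabla X)_x$, the paper differentiates $X\equiv 0$ along curves in $F$. It takes for granted that $F$ is a submanifold, and it adds a constant-rank argument via $\nabla_Y(\nabla X)=-R_{X,Y}=0$ on $F$. Your local description $F\cap W=\exp_x(V_x\cap U)$ instead proves that $F$ is an embedded submanifold. Your open--closed argument in $t$ correctly handles the injectivity issue without any invariant inner product. Two small points there: $\exp_x^{-1}(y)\cap U$ is just $\{v\}$, and $U$ should be star-shaped, so that $\exp_x(V_x\cap U)$ is connected and lies in the component $F$ rather than merely in the fixed-point set.

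For (2) the paper argues infinitesimally. For $Y,Z$ tangent to $F$, the identity $0=(L_X\nabla)_YZ=R_{X,Y}Z+\nabla_Y\nabla_ZX-\nabla_{\nabla_YZ}X$ reduces on $F$ to $\nabla_{\nabla_YZ}X=0$. Hence $\nabla_YZ\in\mathrm{Ker}(\nabla X)=TF$, which is directly the vanishing of the second fundamental form. You obtain the stronger statement that entire geodesics tangent to $F$ lie in $F$. To reach the paper's formulation, add that $h(v,v)=0$ for all $v$ and that $h$ is symmetric because $\nabla$ is torsion-free.

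The paper's route is shorter and tensorial, in the spirit of the curvature identities it uses afterwards (Lemma~\ref{der_nab_q}). Yours is self-contained and supplies the submanifold structure that the paper assumes.
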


\begin{proof}
At first, we show (1). At $x \in F$, take $Y \in T_{x}F$ and a curve $\gamma:I \to F$ such that 
$\gamma(0)=x$ and $\gamma^{\prime}(0)=Y$. 
Since $X_{\gamma(t)}=0$ for all $t \in I$, we have 
\begin{align*} 
\nabla_{Y} X=\nabla_{\gamma^{\prime}(0)}X=
\left. \frac{d}{dt}
(P_{\gamma}|^{t}_{0})^{-1}(X_{\gamma(t)}) \right|_{t=0}=0,
\end{align*}
where $(P_{\gamma}|^{t}_{0}):T_{\gamma(0)}M \to T_{\gamma(t)}M$ is the parallel transformation along 
$\gamma$ with respect to $\nabla$. This shows that $T_{x}F \subset \mathrm{Ker}(\nabla X)_{x}$. 
Since $L_{X} \nabla=0$, we have $\nabla_{Y} (\nabla X)=-R^{\nabla}_{X,Y}$, hence it vanishes on $F$. 
Therefore $\dim \mathrm{Ker}(\nabla X)$ is constant on $F$, which means that 
$\cup_{x \in F} \mathrm{Ker}(\nabla X)_{x}$ is a smooth vector bundle over $F$. 
Take $Y \in \mathrm{Ker}(\nabla X)_{x}$ and extend it to a vector field 
$\bar{Y}$ on a neighborhood $U$ of $x$ such that $\bar{Y}|_{U \cap F} \in \mathrm{Ker}(\nabla X)$ 
and $\bar{Y}_{x}=Y$.  
To show that $Y \in T_{x}F$, it is sufficient to prove $\varphi_{s}(\exp(tY))=\exp(tY)$ for all $s$, 
where $\exp$ is the exponential map with respect to $\nabla$ and 
$\{ \varphi_{s} \}$ is the one parameter family of $X$
(Thus it holds that $\exp(tY) \in F$ and $Y \in T_{x}F$). 
Therefore, we shall show $\varphi_{s \ast}(Y)=Y$. 
This follows from 
\begin{align*}
[X,\bar{Y}]_{x}=(\nabla_{X} \bar{Y}-\nabla_{\bar{Y}}X)_{x}=0. 
\end{align*}
Here we have used $X_{x}=0$ and $\bar{Y}_{x}=Y \in \mathrm{Ker}(\nabla X)_{x}$ at $x \in F$. 
Next, we show (2). 
Since $X$ preserves $\nabla$, we have
\begin{align*}
0 =(L_{X} \nabla)_{Y}Z 
  =R^{\nabla}_{X,Y}Z - \nabla_{\nabla_{Y}Z}X 
\end{align*}
for $Y,Z \in \Gamma(TF)$. Hence it holds that $\nabla_{\nabla_{Y}Z}X=0$ on $F$, which means that 
$\nabla_{Y}Z \in \Gamma(\mathrm{Ker}(\nabla X))=\Gamma(TF)$. 
This shows that $F$ is totally geodesic. 
\end{proof}

Let $(M,Q)$ be a quaternionic manifold. 
We consider the normalizer 
\begin{align*}
N(Q)=\{ A \in \mathrm{End}(TM) \mid [A,I] \in Q\,\,\, \mbox{for all}\,\,\, I \in Q \} 
\end{align*}
and the centralizer 
\begin{align*}
Z(Q)=\{ A \in \mathrm{End}(TM) \mid [A,I]=0\,\,\, \mbox{for all}\,\,\, I \in Q \}. 
\end{align*}
Then we see that $N(Q)=Q+Z(Q)$. 
If $X \in \Gamma(TM)$ generates a quaternionic $S^{1}$-action, 
then $\nabla X$ is an element of $N(Q)$. Now we decompose 
\[ \nabla X=f_{Q}^{\nabla}+f_{Z}^{\nabla} \in \Gamma(Q)+\Gamma(Z(Q)). \]
On a fixed-point set $F$, $f_{Q}^{\nabla}$ and $f_{Z}^{\nabla}$ are 
independent of the choice of a quaternionic connection. Therefore, we can omit the symbol $\nabla$ for 
$f_{Q}^{\nabla}$ and $f_{Z}^{\nabla}$ on $F$, that is, we simply write $f_{Q}$ and $f_{Z}$ on $F$.  
Before proving Theorem \ref{thm_fixpts}, we establish several lemmas.

\begin{lemma}\label{cpx_submfd}
Let $(M,Q)$ be a quaternionic manifold endowed with an $S^{1}$-action preserving $Q$, 
and let $X$ denote the vector field generating the $S^{1}$-action. 
Let $\nabla$ be a quaternionic connection, and let $F$ be a connected component 
of the fixed-point set of the $S^{1}$-action which is not contained in 
$f_{Q}^{\nabla -1}(0)$. Then we have 
\begin{enumerate}
  \item $F$ is an almost complex submanifold of $(M \setminus f_{Q}^{\nabla -1}(0),Q)$,
  \item $I_{2}(TF) \subset \mathrm{Im} (\nabla X)$ for a local admissible frame 
  $(I_{1}=\frac{f_{Q}}{\| f_{Q} \|}, I_{2},I_{3})$ on $(M \setminus f_{Q}^{\nabla -1}(0),Q)$,
  \item $TF \cap I_{2}TF = TF \cap I_{3}TF = \{ 0 \}$ for such a local admissible frame.
\end{enumerate}
\end{lemma}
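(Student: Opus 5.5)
The plan is to work pointwise on $F$ and use only two facts. First, $T_xF=\mathrm{Ker}(\nabla X)_x$ by Lemma \ref{fix_point}. Second, $\nabla X = f_Q + f_Z$ with $f_Z$ commuting with all of $Q$. Write $f_Q=a I_1$ with $a=\|f_Q\|$, so that $I_1 = f_Q/\|f_Q\|$ wherever $a\neq 0$.

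\emph{Step 0: $F$ avoids $f_Q^{\nabla -1}(0)$.} In the proof of Lemma \ref{fix_point} we saw that $\nabla_Y(\nabla X) = -R^{\nabla}_{X,Y}$, which vanishes on $F$ since $X=0$ there. Hence $\nabla X$ is parallel along $F$. Since $\nabla$ preserves $Q$, it also preserves $Z(Q)$ and the splitting $N(Q)=Q+Z(Q)$. So $f_Q$ is parallel along $F$. Because quaternionic connections are metric on $Q$, $\|f_Q\|$ is constant on the connected set $F$. As $F\not\subset f_Q^{\nabla-1}(0)$, we get $a\neq 0$ everywhere on $F$. In particular $F\subset M\setminus f_Q^{\nabla -1}(0)$ and $I_1$ is defined along $F$.

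\emph{Step 1: almost complex.} The endomorphism $I_1$ commutes with $f_Q=aI_1$ and with $f_Z$, hence with $\nabla X$. Therefore $I_1$ preserves $\mathrm{Ker}(\nabla X)=TF$, and $I_1|_{F}$ is a section of $Q|_F$ with $I_1^2=-\mathrm{Id}$ and $I_1(TF)\subset TF$. This gives (1).

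\emph{Step 2: the image.} Take $Y\in T_xF$. From $\nabla X(Y)=0$ we get $f_ZY=-aI_1Y$. Since $I_2$ commutes with $f_Z$ and anticommutes with $I_1$,
\[
\nabla X(I_3Y)=aI_1I_3Y+I_3f_ZY=-aI_2Y-aI_3I_1Y=-2aI_2Y .
\]
So $I_2Y=\nabla X\bigl(-\tfrac{1}{2a}I_3Y\bigr)\in\mathrm{Im}(\nabla X)$, which is (2). The same computation gives $\nabla X(I_2Y)=2aI_3Y$, so $I_3(TF)\subset \mathrm{Im}(\nabla X)$ as well.

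\emph{Step 3: transversality.} This is the only step needing more than algebra: we must show $\mathrm{Ker}(\nabla X)_x\cap\mathrm{Im}(\nabla X)_x=\{0\}$. The plan is to identify $(\nabla X)_x$ with the infinitesimal isotropy representation of $S^1$ on $T_xM$. For any vector field $Y$ we have $(L_XY)_x=[X,Y]_x=-(\nabla_YX)_x$ since $X_x=0$, so $d\varphi_s|_x=\exp(-s(\nabla X)_x)$. The group $\{\varphi_s\}$ is a compact $S^1$, so this linear representation preserves an inner product on $T_xM$. Hence $(\nabla X)_x$ is skew-adjoint, in particular semisimple, and $T_xM=\mathrm{Ker}\oplus\mathrm{Im}$. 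Combined with Step 2, this gives $TF\cap I_2TF\subset \mathrm{Ker}\cap\mathrm{Im}=\{0\}$, and likewise $TF\cap I_3TF=\{0\}$, which is (3).

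The main subtlety is Step 3, the use of compactness. If one prefers to avoid it, an alternative is to show $f_Z$ is semisimple directly. However, the averaging argument over the $S^1$-action is the cleanest route, and I expect it to be the intended one.
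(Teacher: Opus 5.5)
Your Steps 1 and 2 follow the paper. $I_1$ commutes with $\nabla X=\|f_Q\|I_1+f_Z$, so it preserves $\mathrm{Ker}(\nabla X)=TF$. Your identities $\nabla X(I_3Y)=-2aI_2Y$ and $\nabla X(I_2Y)=2aI_3Y$ for $Y\in TF$ are the paper's equation \eqref{eq1}, relabeled.

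\textbf{Step 3 takes a different route.} You get $\mathrm{Ker}(\nabla X)_x\cap\mathrm{Im}(\nabla X)_x=\{0\}$ because $(\nabla X)_x$ generates the isotropy representation of the compact group $S^1$, and is therefore skew-adjoint for an averaged inner product. This is correct. The sign should be $d\varphi_s|_x=\exp(s(\nabla X)_x)$, but that does not matter.

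The paper stays purely algebraic. It observes that $V=TF\cap I_2TF$ is $Q$-invariant, and inserting $I_3Y\in V\subset\mathrm{Ker}(\nabla X)$ into \eqref{eq1} forces $Y=0$. In your notation this takes one line: if $Y=I_3Z\in TF$ with $Z\in TF$, then $0=\nabla X(Y)=\nabla X(I_3Z)=-2aI_2Z$, so $Z=0$. So Step 3 is not really ``the only step needing more than algebra''; your own Step 2 identity already gives (3).

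What each route buys:
\begin{itemize}
\item Yours gives the stronger splitting $T_xM=\mathrm{Ker}\oplus\mathrm{Im}$ (semisimplicity of the isotropy generator), at the cost of invoking compactness of the group.
\item The paper's uses only $f_Q\neq 0$, $f_Z\in Z(Q)$ and $TF=\mathrm{Ker}(\nabla X)$.
\end{itemize}

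\textbf{Step 0 has a real flaw, but it is easy to repair.} The identity $\nabla_Y(\nabla X)=-R^{\nabla}_{X,Y}$ comes from $L_X\nabla=0$. In this lemma, however, $\nabla$ is an arbitrary quaternionic connection. If $\nabla'=\nabla+S^{\xi}$ is $S^1$-invariant, then along $F$ you only get $\nabla_Y(\nabla X)=-[S^{\xi}_Y,\nabla X]$. Its $Q$-part is a nonzero rotation of $f_Q$ in general, so $f_Q^{\nabla}$ need not be parallel along $F$.

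The repair is the one the paper uses in the proof of Theorem \ref{thm_fixpts}. At zeros of $X$, the endomorphism $\nabla X$ (hence $f_Q$ and $f_Z$) does not depend on the quaternionic connection. So you may run Step 0 with an $S^1$-invariant quaternionic connection, which exists by \cite[Corollary 4.3]{CH}. The same remark justifies applying Lemma \ref{fix_point} to a non-invariant $\nabla$ to get $T_xF=\mathrm{Ker}(\nabla X)_x$; the paper also does this implicitly.

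Note also that the paper's proof of this lemma has no Step 0 at all. It works pointwise where $f_Q\neq 0$, and proves that $\|f_Q\|$ is constant on $F$ only later, in Theorem \ref{thm_fixpts}.
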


\begin{proof}
We first prove (1). 
Set $I:=\frac{f_{Q}}{\| f_{Q} \|}$ and decompose 
\[
\nabla X = f_{Q}+f_{Z}= \| f_{Q} \| I + f_{Z} \in Q+Z(Q)
\]
on $F$. For $Y \in \mathrm{Ker}(\nabla X)_{x}$, we compute
\begin{align*}
\| f_{Q} \| (\nabla_{IY} X)
&= \nabla_{\| f_{Q} \| IY} X 
= \nabla_{f_{Q}Y} X 
= (\nabla X)(f_{Q}Y) \\
&= (f_{Q}+f_{Z})(f_{Q}Y) 
= f_{Q}(f_{Q}+f_{Z})(Y) 
= f_{Q}(\nabla X)(Y)=0.
\end{align*}
Hence $T_{x}F=\mathrm{Ker}(\nabla X)_{x}$ is $I$-invariant, 
which proves (1). 
Next we prove (2). 
Take $Y \in \mathrm{Ker}(\nabla X)$. Then 
\[
0=\nabla_{Y} X=f_{Q}Y+f_{Z}Y,
\]
and hence
\[
I_{1}Y=\frac{1}{\| f_{Q} \|}f_{Q}Y
=-\frac{1}{\| f_{Q}\|} f_{Z}Y.
\]
Since
\[
\frac{1}{2 \| f_{Q} \|}(f_{Q}-f_{Z})(Y)
=-\frac{1}{ \| f_{Q} \|} f_{Z}Y,
\]
we obtain
\[
I_{1}Y=\frac{1}{2 \| f_{Q} \|}(f_{Q} - f_{Z})(Y).
\]
Therefore,
\begin{align}\label{eq1}
I_{2}I_{1}Y
&=\frac{1}{2 \| f_{Q} \|}(-f_{Q} - f_{Z})(I_{2}Y) 
=- \frac{1}{2 \| f_{Q} \|}(f_{Q}+ f_{Z})(I_{2}Y) \\
&=- \frac{1}{2 \| f_{Q} \|} (\nabla X)(I_{2}Y)
\in \mathrm{Im} (\nabla X). \nonumber
\end{align}
Since $\mathrm{Ker}(\nabla X)$ is $I_{1}$-invariant, we obtain 
\[
I_{2}(TF)=I_{2}(\mathrm{Ker}(\nabla X)) 
\subset \mathrm{Im} (\nabla X).
\]
Finally, we prove (3). 
Set $V:=TF \cap I_{2}TF = TF \cap I_{3}TF$. 
Then $V$ is $Q$-invariant. 
Let $Y \in V$. Then $Y$ and $I_{\alpha}Y$ belong to $V$. 
Substituting $I_{3}Y \in V \subset \mathrm{Ker}(\nabla X)$ 
into (\ref{eq1}) in place of $Y$, we obtain
\[
Y
=- \frac{1}{2 \| f_{Q} \|} (\nabla X)(I_{1}Y).
\]
Since $I_{1}Y \in TF=\mathrm{Ker}(\nabla X)$, 
it follows that $Y=0$, which proves (3).
\end{proof}

\begin{lemma}\label{der_nab_q}
Let $\nabla$ be a quaternionic connection, and let 
$X$ be a vector field preserving $Q$. Then we have 
\begin{align*}
\nabla_{Y} f_{Q}^{\nabla}
&=
-\frac{1}{4n} \sum_{\alpha=1}^{3} (\mathrm{Tr} H^{\nabla}_{Y, I_{\alpha}(\, \cdot \,)}X      ) I_{\alpha}
\end{align*}
for $Y \in TM$, where $H^{\nabla}$ is the Hessian with respect to $\nabla$. In addition, 
if $L_{X} \nabla=0$, then we have
\begin{align*}
\nabla  f_{Q}^{\nabla}
&=
\frac{1}{4n} \sum_{\alpha=1}^{3} (\mathrm{Tr} R^{\nabla}_{X,(\, \cdot \, )} \circ I_{\alpha}) I_{\alpha}.
\end{align*}
\end{lemma}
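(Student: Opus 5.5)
We extract $f_Q^{\nabla}$ from $\nabla X$ by a trace formula and then differentiate that formula. Since $Q$ and $Z(Q)$ are complementary, with elements of $Z(Q)$ commuting with every $I_\alpha$, the $Q$-component can be computed with the metric $(\,\cdot\,,\,\cdot\,)$. For $A \in Z(Q)$, the anticommutation $I_\beta I_\alpha = -I_\alpha I_\beta$ ($\beta\neq\alpha$) gives
\[
\mathrm{Tr}(A I_\alpha)=\mathrm{Tr}(A I_\beta I_\beta^{-1} I_\alpha)=-\mathrm{Tr}(I_\beta A I_\alpha I_\beta^{-1})=-\mathrm{Tr}(AI_\alpha).
\]
Hence $\mathrm{Tr}(AI_\alpha)=0$, i.e.\ $Z(Q)\perp Q$ with respect to $-\frac{1}{4n}\mathrm{Tr}$. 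We therefore have the pointwise identity
\[
f_Q^{\nabla}=\sum_{\alpha=1}^{3}(\nabla X, I_\alpha) I_\alpha=-\frac{1}{4n}\sum_{\alpha=1}^3 \mathrm{Tr}\big((\nabla X)\circ I_\alpha\big)\, I_\alpha ,
\]
which holds for any local admissible frame. The right-hand side is independent of the frame, being the orthogonal projection onto $Q$.

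Differentiate this identity in the direction $Y$. Because $\nabla$ is metric on $Q$, we may choose, at a given point $x$, an admissible frame with $(\nabla I_\alpha)_x=0$; this is possible since $\nabla$ restricted to $Q$ is an $\mathfrak{so}(3)$-connection, so take a frame parallel along radial geodesics. Alternatively, the terms involving $\nabla I_\alpha$ cancel: $\nabla_Y I_\alpha=\sum_\beta \omega_{\alpha\beta}(Y) I_\beta$ with $\omega$ skew, and the two resulting contributions $\sum_{\alpha,\beta}\omega_{\alpha\beta}\big(\mathrm{Tr}(\nabla X\, I_\beta)I_\alpha+\mathrm{Tr}(\nabla X\, I_\alpha)I_\beta\big)$ vanish by skew-symmetry. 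Since the trace commutes with $\nabla_Y$, this leaves
\[
\nabla_Y f_Q^{\nabla}=-\frac{1}{4n}\sum_\alpha \mathrm{Tr}\big((\nabla_Y\nabla X)\circ I_\alpha\big)I_\alpha .
\]
Now $(\nabla_Y \nabla X)(Z)=\nabla^2_{Y,Z}X=H^{\nabla}_{Y,Z}X$, so $\mathrm{Tr}\big((\nabla_Y\nabla X)\circ I_\alpha\big)=\mathrm{Tr}\, H^{\nabla}_{Y,I_\alpha(\cdot)}X$. This is the first formula.

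For the second formula, when $L_X\nabla=0$, use the identity already employed in the proof of Lemma~\ref{fix_point}:
\[
0=(L_X\nabla)_Y Z=H^{\nabla}_{Y,Z}X+R^{\nabla}_{X,Y}Z .
\]
This is the standard formula $(L_X\nabla)_YZ=\nabla^2_{Y,Z}X+R_{X,Y}Z$ for torsion-free connections, and the sign convention should be checked against the proof of Lemma~\ref{fix_point}, where $\nabla_Y(\nabla X)=-R^{\nabla}_{X,Y}$. It gives $H^{\nabla}_{Y,I_\alpha Z}X=-R^{\nabla}_{X,Y}I_\alpha Z$, so
\[
\nabla_Y f_Q^{\nabla}=\frac{1}{4n}\sum_\alpha \mathrm{Tr}\big(R^{\nabla}_{X,Y}\circ I_\alpha\big) I_\alpha ,
\]
which is the stated expression with $Y$ in the slot $(\,\cdot\,)$.

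The main obstacle is bookkeeping rather than substance. One must justify that $Q\perp Z(Q)$, so that the projection formula is exact, and confirm that the frame-derivative terms cancel. One must also keep the sign convention for $R^{\nabla}$ and the Hessian consistent with the paper's earlier usage in Lemma~\ref{fix_point}.
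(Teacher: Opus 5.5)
Your proposal is correct and follows the paper's proof: write $f_{Q}^{\nabla}=-\frac{1}{4n}\sum_{\alpha}\mathrm{Tr}((\nabla X)I_{\alpha})I_{\alpha}$, differentiate, cancel the two $\nabla_{Y}I_{\alpha}$ terms, identify $\nabla_{Y}(\nabla X)$ with $H^{\nabla}_{Y,(\,\cdot\,)}X$, and use $H^{\nabla}_{Y,Z}X=-R^{\nabla}_{X,Y}Z$ when $L_{X}\nabla=0$. The difference is that you also justify steps the paper leaves implicit. These are $Q\perp Z(Q)$, which makes the projection formula exact, and the skew-symmetry argument for why the frame-derivative terms cancel.
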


\begin{proof}
Since an admissible frame $(I_{1},I_{2},I_{3})$ is an orthonormal frame with respect to 
$(\, \cdot \, , \, \cdot \,)$, 
we have
\begin{align*}
\nabla_{Y} f_{Q}^{\nabla}
=&-\frac{1}{4n} \sum_{\alpha=1}^{3} Y(\mathrm{Tr} (\nabla X)I_{\alpha}) I_{\alpha}
   -\frac{1}{4n} \sum_{\alpha=1}^{3} (\mathrm{Tr} (\nabla X)I_{\alpha}) (\nabla_{Y}I_{\alpha})\\
=&-\frac{1}{4n} \sum_{\alpha=1}^{3} (\mathrm{Tr} (\nabla_{Y} (\nabla X))I_{\alpha}) I_{\alpha}
   -\frac{1}{4n} \sum_{\alpha=1}^{3} (\mathrm{Tr} (\nabla X)(\nabla_{Y} I_{\alpha})) I_{\alpha} \\
  &-\frac{1}{4n} \sum_{\alpha=1}^{3} (\mathrm{Tr} (\nabla X)I_{\alpha}) (\nabla_{Y}I_{\alpha}). 
\end{align*}
The second term and third term are canceled since $\nabla$ is a quaternionic connection. 
By $\nabla_{Y} (\nabla X)=H^{\nabla}_{Y, (\, \cdot \,)}X$, we have 
\begin{align*}
\nabla_{Y} f_{Q}^{\nabla}
=-\frac{1}{4n} \sum_{\alpha=1}^{3} (\mathrm{Tr} H^{\nabla}_{Y, I_{\alpha}(\, \cdot \,)}X      ) I_{\alpha}. 
\end{align*}
If $L_{X}\nabla=0$, then we see $R^{\nabla}_{X,Y}Z=-H^{\nabla}_{Y,Z}X$ for all $Y,Z \in TM$. 
\end{proof}

We can now state and prove the following theorem.

\begin{theorem}\label{thm_fixpts}
Let $(M,Q)$ be a quaternionic manifold endowed with an $S^{1}$-action preserving $Q$. 
If $F$ is a connected component of the fixed-point set of the $S^{1}$-action, 
then $\| f_{Q} \|$ is constant on $F$. 
If $f_{Q}=0$ on $F$, then $F$ is a quaternionic submanifold. Otherwise,  
$F$ is a transversally complex and totally geodesic submanifold with respect to 
a quaternionic connection and its dimension satisfies 
\begin{align}\label{2_5_dim_ineq}
\dim F \leq  \frac{1}{2}\dim M. 
\end{align}
\end{theorem}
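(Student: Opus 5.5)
First I choose a quaternionic connection $\nabla$ that is invariant under the $S^{1}$-action. Starting from any quaternionic connection $\nabla^{0}$, average its pullbacks over the compact group $S^{1}$. By Lemma \ref{q_conn_eq}, any two quaternionic connections differ by some $S^{\xi}$, and $S^{\xi}$ is linear in $\xi$. The average is therefore $\nabla^{0}+S^{\bar\xi}$ for an averaged $1$-form $\bar\xi$, so it is again a quaternionic connection. With this choice $L_{X}\nabla=0$, and Lemma \ref{fix_point} applies: $T_{x}F=\mathrm{Ker}(\nabla X)_{x}$, and $F$ is totally geodesic for $\nabla$. Since $f_{Q}$ on $F$ does not depend on the connection, I may compute everything with this $\nabla$.

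\textbf{Constancy of $\|f_{Q}\|$.} By Lemma \ref{der_nab_q}, since $L_{X}\nabla=0$,
\[
\nabla f_{Q}^{\nabla}=\frac{1}{4n}\sum_{\alpha}\big(\mathrm{Tr}\,R^{\nabla}_{X,(\cdot)}\circ I_{\alpha}\big)I_{\alpha}.
\]
This vanishes at every point of $F$, because $X=0$ there. Hence $\nabla_{Y}f_{Q}=0$ for all $Y\in TF$. The bundle metric $(\cdot,\cdot)$ on $Q$ is $\nabla$-parallel, so $Y\|f_{Q}\|^{2}=2(\nabla_{Y}f_{Q},f_{Q})=0$. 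As $F$ is connected, $\|f_{Q}\|$ is constant on $F$. In particular, either $f_{Q}\equiv0$ on $F$ or $f_{Q}$ vanishes nowhere on $F$.

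\textbf{Case $f_{Q}=0$.} Here $\nabla X=f_{Z}$ on $F$, and $f_{Z}$ commutes with every $I\in Q$. So $\mathrm{Ker}(\nabla X)=TF$ is invariant under all of $Q$, which makes $F$ a quaternionic submanifold.

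\textbf{Case $f_{Q}\neq0$.} Lemma \ref{cpx_submfd} applies, with $F$ contained in $M\setminus f_{Q}^{-1}(0)$. It says $F$ is almost complex with respect to $I_{1}=f_{Q}/\|f_{Q}\|$, and that $TF\cap I_{2}TF=TF\cap I_{3}TF=\{0\}$ for the admissible frames $(I_{1},I_{2},I_{3})$. To reach transversal complexity I take an arbitrary $L\perp I_{1}$ in $Q$. After rescaling, $L=aI_{2}+bI_{3}$ with $a^{2}+b^{2}=1$, and $(I_{1},L,I_{1}L)$ is again an admissible frame. Applying part (3) of Lemma \ref{cpx_submfd} to this frame gives $TF\cap L(TF)=\{0\}$. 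So $F$ is transversally complex, and it is totally geodesic for the invariant $\nabla$ chosen above.

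\textbf{Dimension bound.} By part (2) of Lemma \ref{cpx_submfd}, $I_{2}(TF)\subset\mathrm{Im}(\nabla X)$. Then
\[
\dim F=\dim I_{2}(TF)\le\operatorname{rank}\nabla X=\dim M-\dim\mathrm{Ker}\nabla X=\dim M-\dim F,
\]
which gives \eqref{2_5_dim_ineq}. Alternatively, part (3) alone gives $TF\oplus I_{2}TF\subset T_{x}M$, hence $2\dim F\le\dim M$.

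\textbf{Main obstacle.} The delicate point is the constancy step. Lemma \ref{der_nab_q} only controls $\nabla f_{Q}$ once $L_{X}\nabla=0$, so everything depends on the existence of an $S^{1}$-invariant quaternionic connection. That in turn needs the averaging argument to preserve torsion-freeness and quaternionicity, which is where Lemma \ref{q_conn_eq} is essential.
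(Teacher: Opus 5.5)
Your proposal is correct and follows the paper's proof essentially step for step: an $S^{1}$-invariant quaternionic connection, Lemma \ref{der_nab_q} for the constancy of $\|f_{Q}\|$, $f_{Z}\in Z(Q)$ in the quaternionic case, and Lemma \ref{cpx_submfd} for the complex case and the dimension bound. The differences are minor. You build the invariant connection by averaging, using Lemma \ref{q_conn_eq} to see the average is still quaternionic, where the paper cites \cite[Corollary 4.3]{CH}; you make explicit the frame rotation giving $TF\cap L(TF)=\{0\}$ for every $L\perp I_{1}$; and you skip the paper's extra check of integrability of $I_{1}$ on $F$, which your argument does not need since integrability is automatic for transversally complex submanifolds.
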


\begin{proof}
Let $X$ be a vector field on $M$ generating the $S^{1}$-action. 
By \cite[Corollary 4.3]{CH}, there exists a quaternionic connection $\nabla$ such that $L_{X}\nabla=0$. 
Since 
\[ H^{\nabla}_{Y,Z}X=-R^{\nabla}_{X,Y}Z=0 \]
for all $Y,Z \in \Gamma(TF)$,  
Lemma \ref{der_nab_q} implies that $\nabla f_{Q}=0$, and hence $\| f_{Q} \|$ is constant on $F$. 
If $ f_{Q} =0$ on $F$, then $\nabla X=f_{Z} \in Z(Q)$ along $F$. 
It follows that $TF$ is $Q$-invariant, and consequently $F$ is a quaternionic submanifold.  
If $f_{Q} \neq  0$ on $F$, then Proposition \ref{cpx_submfd} shows that
$F$ is an almost complex and totally geodesic submanifold. 
Moreover, the induced almost complex structure $I_{1}$ of $F$ is integrable since  
\[ (\nabla_{Y} I_{1})(Z)=0 \]
for all $Y,Z \in TF$. By (3) of Lemma \ref{cpx_submfd}, it follows that $F$ is a transversally complex submanifold. 
The inequality \eqref{2_5_dim_ineq} follows from (3) of Lemma \ref{cpx_submfd}. 
\end{proof}

\begin{remark}
{\rm 
The highest dimensional case for $f_{Q} \neq 0$ is proved in the first part of \cite[Theorem 4]{BC}. 
We refer to \cite{B} for the case of a quaternionic K\"ahler manifold. 
}
\end{remark}

\begin{remark}
{\rm 
Any quaternionic submanifold in a quaternionic manifold 
is totally geodesic with respect to any quaternionic connection (see Lemma \ref{q_inv}). 
On the other hand, 
the latter is that $F$ is totally geodesic with respect to a specific quaternionic connection $\nabla$.  
}
\end{remark}

\begin{remark}\label{rem_para}
{\rm 
Consider the case of $f_{Q} \neq 0$ and a vector subbundle 
$V:=TF \oplus I_{2}TF(=TF \oplus I_{3}TF)$ of $TM|_{F}$ over $F$. 
Then $V$ is $Q$-invariant and parallel subbundle with respect to 
$\nabla$ such that $F$ is totally geodesic. 
By Lemma \ref{q_conn_eq}, the subbundle $V$ is parallel with respect to
any quaternionic connection of $M$. In particular, we have 
\[ \nabla_{X}Y \in \Gamma(V) \]
for any $X$, $Y \in \Gamma(TF)$ with respect to any quaternionic connection $\nabla$. 
This allows us to project $R^{\nabla}_{X,Y}Z$ ($X$, $Y$, $Z \in TF$) 
onto its tangent component using the decomposition of $V$. 
}
\end{remark}

We give examples which are discussed later.

\begin{example}\label{cpx_pr}
{\rm 
Consider the $S^{1}$-action on the quaternionic projective space $\mathbb{H}P^{n}$ given by 
$e^{i \theta}[z_{1}:\cdots:z_{n+1}]:=[e^{i \theta} z_{1}:\cdots: e^{i \theta} z_{n+1}]$. 
Then $F=\mathbb{C}P^{n}$.
}
\end{example}

\begin{example}\label{cpx_gr}
{\rm 
Let $Gr(2,m)$ denote the Grassmannian of all $2$-dimensional subspaces in
$\mathbb{C}^{m}$. Consider the $S^{1}$-action on $Gr(2,m)$ defined by 
\[ e^{i \theta} W := \mathrm{Span} 
\{ (e^{i p \theta} u_{1}, e^{i q \theta} u_{2}, \dots , e^{i q \theta} u_{m}), 
(e^{i p \theta} v_{1}, e^{i q \theta} v_{2}, \dots , e^{i q \theta} v_{m}) \}
\]
for each subspace $W=\mathrm{Span}\{ u,v \}$, where
$u=(u_{1}, u_{2}, \dots, u_{m})$, $v=(v_{1}, v_{2}, \dots, v_{m}) \in \mathbb{C}^{m}$, 
and $p$, $q \in \mathbb{N}$ are distinct with $gcm(p,q)=1$. 
Under this action, $F=\mathbb{C}P^{m-2}$ and $Gr(2,m-1)$.
Here we identify $Gr(2,m-1)$ with the space of all $2$-dimensional subspaces of the form
$\mathrm{Span} \{  (0, u_{2}, \dots, u_{m}), (0, v_{2}, \dots, v_{m}) \}$. 
}
\end{example}

\section{The twistor function and a quaternionic complex manifold}
\setcounter{equation}{0}

\subsection{The twistor operator}\label{sec_twist_fun}
Since the twistor operator has a significant role in this paper, 
we give a detailed and elementary description for it. 
Let $M$ be an $m$-dimensional manifold, and set $L:=\Lambda^{m}(T^{\ast}M)$. 
Take coordinate systems 
$\{ U_{\alpha}, (x^{\alpha}_{1},\dots,x^{\alpha}_{m}) \}_{\alpha \in \Lambda}$ of $M$. 
Consider a collection of maps $t_{\alpha \beta} : U_{\alpha} \cap U_{\beta} \to \mathbb{R}^{\ast}$ 
defined by
\begin{align*}
U_{\alpha} \cap U_{\beta} \ni x \mapsto \left| \det \left(
\frac{\partial x^{\beta}_{i}}{\partial x^{\alpha}_{j}} 
\right)_{x} \right|^{-s}
\end{align*}
for  $s \in \mathbb{R}$. 
Since the collection $\{ t_{\alpha \beta }\}$ satisfies the cocycle condition, 
we can obtain the vector bundle $L^{s}$ of $\mathrm{rank} L^{s}=1$.
We have $L^{s} \otimes L^{s^{\prime}} \cong L^{s+s^{\prime}}$. 
In particular, it holds that 
\[ \underbrace{(L^{\frac{q}{p}}) \otimes \cdots \otimes (L^{\frac{q}{p}})}_{p \,\, times}
\cong \underbrace{L \otimes \cdots \otimes L}_{q \,\, times}\] 
for $q/p \in \mathbb{Q}^{\ast}$. 
Indeed, a transition function of a tensor product of  
two line bundles over $M$ is given by $\{ t_{\alpha \beta} t^{\prime}_{\alpha \beta} \}$, 
where $\{ t_{\alpha \beta} \}$, $\{ t^{\prime}_{\alpha \beta} \}$ are transition functions of 
the bundles respectively. 
Let $\nabla$ be a torsion-free affine connection on $M$. 
We denote the connection on $L$ induced from $\nabla$ by the same symbol $\nabla$ 
and define the connection on $L^{s}$ which is denoted by $\nabla^{s}$ 
($\nabla^{1}=\nabla$). 
The connection $\nabla^{\frac{q}{p}}$ ($q/p \in \mathbb{Q}^{\ast}$) is also characterized by 
\begin{align*}
\nabla^{q} (f \otimes \cdots \otimes f)=\sum_{i=1}^{p}
e\otimes \cdots \otimes( \underbrace{\nabla^{\frac{q}{p}} e}_{i-th}) \otimes \cdots \otimes e
\end{align*}
for $\underbrace{e\otimes \cdots \otimes e}_{p \,\, times}
=\underbrace{f \otimes \cdots \otimes f}_{q \,\, times} $. In fact, we have
\begin{align*}
\sum_{i=1}^{p}
e\otimes \cdots \otimes( \underbrace{\nabla^{\frac{q}{p}}_{X} e}_{i-th}) \otimes \cdots \otimes e
&=\frac{q}{p} \, \theta_{L}(X) (p \, (e \otimes \cdots \otimes e)) \\
&=q \, \theta_{L}(X) (f \otimes \cdots \otimes f)=\nabla^{q} (f \otimes \cdots \otimes f),
\end{align*}
where $\theta_{L}$ is defined by $\nabla f=\theta_{L} \otimes f$ 
for a nowhere vanishing section $f \in \Gamma(L)$. 
\begin{lemma}
Let $f$ be a nowhere vanishing section of $L$ and $\nabla$ be a connection such that 
$\nabla f=0$. If $e \in \Gamma(L^{\frac{q}{p}})$ $with 
\underbrace{e\otimes \cdots \otimes e}_{p \,\, times}
=\underbrace{f \otimes \cdots \otimes f}_{q \,\, times}$, then we have $\nabla^{\frac{q}{p}} e=0$. 

\end{lemma}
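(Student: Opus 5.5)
Write $e^{\otimes p}$ for $e\otimes\cdots\otimes e$ ($p$ factors) and $f^{\otimes q}$ similarly. The plan is to apply the characterizing identity of $\nabla^{\frac{q}{p}}$ to the hypothesis $e^{\otimes p}=f^{\otimes q}$ and then cancel. First, $\nabla f=0$ gives $\theta_L=0$. By the Leibniz rule on $L^{\otimes q}$ we get
\[
\nabla^{q}(f^{\otimes q})=\sum_{j=1}^{q} f\otimes\cdots\otimes(\nabla f)\otimes\cdots\otimes f=0 .
\]

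Second, I use the characterization displayed before the lemma:
\[
\sum_{i=1}^{p} e\otimes\cdots\otimes(\nabla^{\frac{q}{p}}e)\otimes\cdots\otimes e=\nabla^{q}(f^{\otimes q})=0 .
\]
Since $e^{\otimes p}=f^{\otimes q}$ and $f$ vanishes nowhere, $e$ also vanishes nowhere. Locally I can therefore write $\nabla^{\frac{q}{p}}_{X}e=\omega(X)\,e$ for a $1$-form $\omega$, because $L^{\frac{q}{p}}$ has rank one. The sum then becomes $p\,\omega(X)\, e^{\otimes p}$. This vanishes and $e^{\otimes p}\neq 0$, so $\omega=0$, i.e.\ $\nabla^{\frac{q}{p}}e=0$.

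Alternatively, one can argue directly from the explicit computation preceding the lemma, which shows $\nabla^{\frac{q}{p}}_X e=\frac{q}{p}\,\theta_L(X)\,e$ for the $e$ matched to $f$; then $\theta_L=0$ gives the claim at once. The only point needing care is that the isomorphism $(L^{\frac{q}{p}})^{\otimes p}\cong L^{\otimes q}$ intertwines the connections $\nabla^{\frac{q}{p}}$ and $\nabla^{q}$. This holds by the definition of $\nabla^{s}$ through the transition functions $|\det|^{-s}$, which is exactly the characterization stated in the paper. I expect no real obstacle: everything reduces to the nondegeneracy of $e$ in a rank-one bundle.
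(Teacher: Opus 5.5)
Your proof is correct and follows essentially the same route as the paper. The paper gives no separate proof: the lemma is read off from the identity displayed just before it, $\nabla^{\frac{q}{p}}_{X}e=\frac{q}{p}\,\theta_{L}(X)\,e$ for $e$ with $e^{\otimes p}=f^{\otimes q}$, since $\nabla f=0$ gives $\theta_{L}=0$. Your main argument, writing $\nabla^{\frac{q}{p}}e=\omega\otimes e$ in the rank-one bundle and cancelling $p\,\omega\, e^{\otimes p}=0$ against the nowhere-vanishing $e^{\otimes p}$, is just an unpacking of that same characterization, and your alternative argument is exactly the paper's.
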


Let $(M,Q)$ be a quaternionic manifold, and let $\nabla$, $\tilde{\nabla}$ be quaternionic connections.

\begin{lemma}\label{q_conn_eq2}
If $\tilde{\nabla}= \nabla+S^{\xi}$, 
the difference of $\tilde{\nabla}^{s}$ and $\nabla^{s}$ as connections on $L^{s}$ is given by 
$\tilde{\nabla}^{s} - \nabla^{s}=-4s(n+1) \xi \otimes \mathrm{Id}_{L^{s}}$. 
\end{lemma}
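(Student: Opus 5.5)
Since $L^{s}$ is built from $L=\Lambda^{4n}T^{\ast}M$ by taking real powers of the transition functions, the plan is to compute the difference on $L$ first and then pass to $L^{s}$ by the characterization of $\nabla^{s}$ given above. For $L$ itself, a connection $\nabla$ on $TM$ induces on $L$ the connection $\nabla_{X}\omega=-\mathrm{Tr}(\nabla_{X})\,\omega$ in a local frame. More precisely, if $\tilde\nabla=\nabla+S^{\xi}$, then
\[
\tilde\nabla_{X}\omega-\nabla_{X}\omega=-\mathrm{Tr}(S^{\xi}_{X})\,\omega ,
\]
because $S^{\xi}_{X}$ acts as a derivation on top forms. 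So the first step is to compute $\mathrm{Tr}(Y\mapsto S^{\xi}_{X}Y)$.

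Using the formula of Lemma \ref{q_conn_eq}, we take traces term by term in $Y$, with $\dim M=4n$:
\begin{itemize}
\item $\mathrm{Tr}(Y\mapsto \xi(X)Y)=4n\,\xi(X)$ and $\mathrm{Tr}(Y\mapsto \xi(Y)X)=\xi(X)$;
\item $\mathrm{Tr}(Y\mapsto -\xi(I_{\alpha}X)I_{\alpha}Y)=0$, since $\mathrm{Tr}I_{\alpha}=0$;
\item $\mathrm{Tr}(Y\mapsto -\xi(I_{\alpha}Y)I_{\alpha}X)=-\xi(I_{\alpha}I_{\alpha}X)=\xi(X)$ for each $\alpha$.
\end{itemize}
Summing gives $\mathrm{Tr}S^{\xi}_{X}=(4n+1+3)\xi(X)=4(n+1)\xi(X)$. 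Hence on $L$ we have $\tilde\nabla-\nabla=-4(n+1)\xi\otimes\mathrm{Id}_{L}$, which is the case $s=1$. The only delicate point here is the sign convention for the induced connection on the dual line $L=\Lambda^{4n}T^{\ast}M$. I would verify it using a frame $e_i$ and its dual coframe: the top form $e^1\wedge\cdots\wedge e^{4n}$ picks up minus the trace.

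Next we pass to $L^{s}$. For rational $s=q/p$, take a local nowhere-vanishing $f\in\Gamma(L)$ and $e\in\Gamma(L^{q/p})$ with $e^{\otimes p}=f^{\otimes q}$. Write $\nabla f=\theta\otimes f$ and $\tilde\nabla f=\tilde\theta\otimes f$, so that $\tilde\theta-\theta=-4(n+1)\xi$. The computation preceding the statement shows $\nabla^{q/p}e=\frac{q}{p}\theta\otimes e$, and likewise for $\tilde\nabla$. Subtracting gives $\tilde\nabla^{s}-\nabla^{s}=-4s(n+1)\xi\otimes\mathrm{Id}_{L^s}$.

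For general real $s$ we argue directly from the transition functions $|\det|^{-s}$. The local trivializing section of $L^{s}$ can be taken to be $|f|^{s}$, with connection form $s\theta$, so the same subtraction applies verbatim. The connection on $L^s$ is linear in $s$ via these local connection forms, which makes the general case immediate.

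The main obstacle is bookkeeping rather than anything conceptual. One has to get the sign of the induced connection on $L$ consistent with the paper's convention $\nabla^{1}=\nabla$, and fix the trace contributions of the $I_\alpha$-terms. The step $\mathrm{Tr}(Y\mapsto\xi(I_\alpha Y)I_\alpha X)=\xi(I_\alpha^2X)$ is the one to double-check.
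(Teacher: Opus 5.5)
Your proposal is correct and follows essentially the same route as the paper. Both compute the difference on $L$ as $-\mathrm{Tr}\,S^{\xi}_{X}$ via the connection forms $\theta_{L}=-\sum_{i}\omega^{i}_{i}$, use $\mathrm{Tr}\,S^{\xi}_{X}=4(n+1)\xi(X)$, and then multiply by $s$ on $L^{s}$; the only difference is that you verify the trace identity term by term (correctly, including $\mathrm{Tr}(Y\mapsto -\xi(I_{\alpha}Y)I_{\alpha}X)=\xi(X)$) instead of citing \cite{AM1}.
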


\begin{proof}
Let $(\xi_{1},\dots,\xi_{4n})$ be a tangent frame of $M$. 
The connection forms with respect to $(\xi_{1},\dots,\xi_{4n})$ for 
quaternionic connections $\tilde{\nabla}$ and $\nabla$ are denoted by 
$\tilde{\omega}^{j}_{i}$ and $\omega^{j}_{i}$. 
Then it holds that 
\begin{align*}
\sum_{i=1}^{4n}  \tilde{\omega}^{i}_{i}(X)=\sum_{i=1}^{4n} {\omega}^{i}_{i}(X) + \mathrm{Tr}S^{\xi}_{X}. 
\end{align*}
The connection forms $\tilde{\theta}_{L}$ and $\theta_{L}$
of $\tilde{\nabla}$ and $\nabla$ of $L$ with respect to 
$f=\xi_{1}^{\ast} \wedge \dots \wedge \xi_{4n}^{\ast}$ are given by 
\begin{align*}
\tilde{\theta}_{L}=-\sum_{i=1}^{4n}  \tilde{\omega}^{i}_{i}, \,\,\ 
\theta_{L}=-\sum_{i=1}^{4n} {\omega}^{i}_{i},  
\end{align*}
where $(\xi_{1}^{\ast},\dots, \xi_{4n}^{\ast})$ is the dual frame of $(\xi_{1},\dots,\xi_{4n})$. 
From $\mathrm{Tr}S^{\xi}_{X}=4(n+1)\xi(X)$ (see \cite{AM1}), it holds that 
\begin{align*}
\tilde{\nabla}^{s}_{X} e - \nabla^{s}_{X} e = s(\tilde{\theta}_{L}(X)-\theta_{L}(X)) e  
=-s(\mathrm{Tr}S^{\xi}_{X}) e =-4s(n+1)\xi(X) e  
\end{align*}
for $X \in TM$ and $e \in \Gamma(L^{s})$. 
\end{proof}

The connection on $Q \otimes L^{s}$ is denoted by $\nabla^{Q, s}$, which is 
induced by $\nabla$ and $\nabla^{s}$.   
We take a local section $I$ of $Q$ 
such that $(I_{1}:=I,I_{2},I_{3})$ is an admissible frame of $Q$ 
and $e \in \Gamma(L^{s})$. 
By $\tilde{\nabla}_{X} I-\nabla_{X}I=[S^{\xi}_{X},I]$ and Lemma \ref{q_conn_eq2}, 
we have 
\begin{align*}
\tilde{\nabla}^{Q, s}_{X} (I \otimes e) 
&= (\tilde{\nabla}_{X} I ) \otimes e + I \otimes (\tilde{\nabla}^{s}_{X} e) \\
&= (\nabla_{X} I ) \otimes e +[S^{\xi}_{X},I] \otimes e 
+I \otimes (D^{s}_{X} e) -4s(n+1) \xi(X) (I \otimes e)\\
&={\nabla}^{Q, s}_{X} (I \otimes e)+[S^{\xi}_{X},I] \otimes e - 4s(n+1) \xi(X) (I \otimes e). 
\end{align*}
Since $[S^{\xi}_{X},I]=[S^{\xi}_{X},I_{1}]=2 \xi(I_{2}X)I_{3}-2 \xi(I_{3}X)I_{2}$, we see that 
\begin{align*}
  &\tilde{\nabla}^{Q, s}_{X} (I \otimes e) - {\nabla}^{Q, s}_{X} (I \otimes e) \\
=&[S^{\xi}_{X},I] \otimes e - 4s(n+1) \xi(X) (I \otimes e) \\
=&\{ -2(\xi \circ I_{1})(I_{3}X) I_{3}-2 (\xi \circ I_{1})(I_{2}X) I_{2}
   +4s(n+1) (\xi \circ I_{1})(I_{1} X) I_{1} \} \otimes e.  
\end{align*}
Consider 
\begin{align*}
T^{\ast}M_{(1)}:=\left\{ A^{\xi}:= \sum_{\alpha=1}^{3} (\xi \circ I_{\alpha}) \otimes I_{\alpha}
\mid \xi \in T^{\ast}M \right\}. 
\end{align*}
If $4s(n+1)=-2$, then 
\begin{align*}
\tilde{\nabla}^{Q, s} (I \otimes e) - {\nabla}^{Q, s}(I \otimes e) = -2 A^{\xi \circ I}(X) \otimes e 
(\in T^{\ast}M_{(1)} \otimes L^{s}). 
\end{align*}
By \cite[Lemma 1.2 (2)]{AM1}, the decomposition 
\begin{align*}
T^{\ast}M \otimes Q=T^{\ast}M_{(1)} \oplus (T^{\ast}M \otimes Q)_{0}
\end{align*}
holds, where $(Q \otimes T^{\ast}M)_{0}$ is the subspace of tensors with all zero contractions. 
Therefore, it holds that 
\begin{align*}
T^{\ast}M \otimes Q \otimes L^{s} 
= (T^{\ast}M_{(1)} \otimes L^{s}) \oplus ((T^{\ast}M \otimes Q)_{0} \otimes L^{s}). 
\end{align*}
By viewing 
a connection on $Q \otimes L^{s}$ as 
a map from $\Gamma(Q \otimes L^{s})$ to 
$\Gamma(T^{\ast}M \otimes Q \otimes L^{s})$, we have the following proposition.

\begin{proposition}[\cite{S}]
If $s=-\frac{1}{2(n+1)}$, then we have 
\begin{align*}
p \circ \tilde{\nabla}^{Q, s} = p \circ \nabla^{Q, s},
\end{align*}
where $p: T^{\ast}M \otimes Q \otimes L^{s} \to (T^{\ast}M \otimes Q)_{0} \otimes L^{s}$ 
is the projection onto the second factor.  
\end{proposition}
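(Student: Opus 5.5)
The plan is to reduce the claim pointwise to the difference formula derived just before the proposition. Every section of $Q\otimes L^{s}$ can be written locally as $\sum_{\alpha} I_{\alpha}\otimes e_{\alpha}$, and $p$ is $C^{\infty}(M)$-linear. So it suffices to show that for every local section $I$ of $Q$ that fits into an admissible frame $(I_{1}=I,I_{2},I_{3})$, and every $e\in\Gamma(L^{s})$, the difference $\tilde{\nabla}^{Q,s}(I\otimes e)-\nabla^{Q,s}(I\otimes e)$ lies in $T^{\ast}M_{(1)}\otimes L^{s}$. Linearity then extends this to arbitrary sections; any section is a combination of $I_{1},I_{2},I_{3}$, and each $I_{\alpha}$ can be taken as the first element of a cyclically permuted admissible frame.

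The key steps, in order, are as follows. First, by Lemma \ref{q_conn_eq} write $\tilde{\nabla}=\nabla+S^{\xi}$. Second, use Lemma \ref{q_conn_eq2} and the bracket formula $[S^{\xi}_{X},I_{1}]=2\xi(I_{2}X)I_{3}-2\xi(I_{3}X)I_{2}$ to obtain the displayed expression
\[
\{-2(\xi\circ I_{1})(I_{3}X)I_{3}-2(\xi\circ I_{1})(I_{2}X)I_{2}+4s(n+1)(\xi\circ I_{1})(I_{1}X)I_{1}\}\otimes e .
\]
Third, insert $s=-\frac{1}{2(n+1)}$, so that $4s(n+1)=-2$ and the three coefficients agree. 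Writing $\eta=\xi\circ I_{1}$, the bracket becomes $-2\sum_{\alpha}(\eta\circ I_{\alpha})(X)\,I_{\alpha}=-2A^{\eta}(X)$, which lies in $T^{\ast}M_{(1)}$. Finally, by the splitting $T^{\ast}M\otimes Q\otimes L^{s}=(T^{\ast}M_{(1)}\otimes L^{s})\oplus((T^{\ast}M\otimes Q)_{0}\otimes L^{s})$ from \cite[Lemma 1.2 (2)]{AM1}, $p$ kills this difference, so $p\circ\tilde{\nabla}^{Q,s}=p\circ\nabla^{Q,s}$ on $I\otimes e$.

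The main obstacle is the sign check in the third step. It requires the identities $\xi(I_{2}X)=-(\xi\circ I_{1})(I_{3}X)$ and $-\xi(I_{3}X)=-(\xi\circ I_{1})(I_{2}X)$, which follow from $I_{1}I_{3}=-I_{2}$ and $I_{1}I_{2}=I_{3}$. It also requires $\xi(X)=-(\xi\circ I_{1})(I_{1}X)$, so that the term $-4s(n+1)\xi(X)I_{1}$ becomes $+4s(n+1)(\xi\circ I_{1})(I_{1}X)I_{1}$. The three coefficients become equal only with these signs, and that equality is exactly what places the difference in $T^{\ast}M_{(1)}$.

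A second, milder point is that $A^{\eta}$ is independent of the choice of admissible frame, so $T^{\ast}M_{(1)}$ is a well-defined subbundle. This holds because $\sum_{\alpha}I_{\alpha}^{\ast}\otimes I_{\alpha}$ is $\mathrm{SO}(3)$-invariant, and it is implicit in the cited decomposition.
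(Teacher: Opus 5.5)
Your proposal is correct and follows essentially the same route as the paper. Both compute the difference on $I\otimes e$, set $4s(n+1)=-2$ to obtain $-2A^{\xi\circ I}(X)\otimes e\in T^{\ast}M_{(1)}\otimes L^{s}$, and conclude with the splitting from \cite[Lemma 1.2 (2)]{AM1}; your sign checks, the extension to arbitrary sections via cyclically permuted frames, and the frame-independence of $T^{\ast}M_{(1)}$ just make explicit what the paper leaves implicit.
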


Hereafter we set 
\[ s_{0}:=-\frac{1}{2(n+1)}. \] 
We can define a operator $D^{s_{0}}:=p \circ \nabla^{Q, s_{0}}  : \Gamma(Q \otimes L^{s}) \to 
\Gamma((T^{\ast}M \otimes Q)_{0} \otimes L^{s_{0}})$, which is independent of the choice 
of a quaternionic connection. 
We call $D^{s_{0}}$ the {\cmssl twistor operator}. 
A section of $Q \otimes L^{s_{0}}$ in $\mathrm{Ker} \, D^{s_{0}}$ 
is called a {\cmssl twistor function} (\cite{J}). Clearly, $\mu$ is a twistor function if and only if 
there exists $\xi \in T^{\ast}M$ such that 
\begin{align}\label{tw_eq}
{\nabla}^{Q, s_{0}} \mu 
=A^{\xi} \otimes e = \sum_{\alpha=1}^{3} (\xi \circ I_{\alpha}) \otimes I_{\alpha} \otimes e. 
\end{align}
If we choose a quaternionic connection $\nabla$ such that 
$\nabla \nu=0$ for some volume element $\nu$ on $M$, 
the section 
$e \in \Gamma(L^{s_{0}})$ with 
$\underbrace{e\otimes \cdots \otimes e}_{2(n+1) \,\, times}=\nu^{\ast} \in \Gamma(L^{-1})(=\Gamma(L^{\ast}))$ 
satisfies $\nabla^{s_{0}} e=0$, where $\nu^{\ast}$ is the dual frame of $\nu$. 
Therefore,  
the equation (\ref{tw_eq}) is equivalent to 
\begin{align}\label{tw_eq1}
\nabla \bar{\mu}=\sum_{\alpha=1}^{3} (\xi \circ I_{\alpha}) \otimes I_{\alpha}, 
\end{align}
where $\mu=\bar{\mu} \otimes e$. 
We say that $\xi$ is called a {\cmssl corresponding one-form} of $\bar{\mu}$ (with respect to $\nabla$). 
Note that 
for a given volume form $\nu$
there exists a unique quaternionic connection $\nabla$ such that 
$\nabla \nu =0$. 
Hereafter for a section $e \in \Gamma(L^{s_{0}})$ satisfying 
\[ \underbrace{e\otimes \cdots \otimes e}_{2(n+1) \,\, times}=\nu^{\ast} \]
for a volume form $\nu$, the pair $(e,\nu)$ is called an 
{\cmssl associated pair}. Any section $e \in \Gamma(L^{s_{0}})$ defines a volume element $\nu$ 
such that $(e,\nu)$ is an associated pair. 

\begin{lemma}
For any volume form $\nu$, there exists $e \in \Gamma(L^{s_{0}})$ such that 
either $(e,\nu)$ or $(e, -\nu)$ is an associated pair.  
\end{lemma}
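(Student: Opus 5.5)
The lemma is about the line bundle $L^{s_{0}}$, $s_{0}=-\frac{1}{2(n+1)}$, which is trivial and has a canonical notion of positivity. The plan is to build $e$ locally from $\nu$ in coordinate charts and then check that the local pieces agree on overlaps. Once $e$ exists, $e^{\otimes 2(n+1)}$ is some nowhere vanishing section of $L^{-1}$. We then compare it with $\nu^{\ast}$ and fix the sign.

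\textbf{Local construction.} Fix the coordinate atlas $\{U_{\alpha},(x^{\alpha}_{1},\dots,x^{\alpha}_{4n})\}$ used to define $L^{s}$. The bundle $L^{s}$ has transition functions $t_{\alpha\beta}=|\det(\partial x^{\beta}_{i}/\partial x^{\alpha}_{j})|^{-s}$, which are positive. Hence it carries a canonical orientation, i.e.\ a notion of positive section.

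On $U_{\alpha}$, write $\nu=h_{\alpha}\,dx^{\alpha}_{1}\wedge\cdots\wedge dx^{\alpha}_{4n}$ with $h_{\alpha}$ nowhere vanishing. Let $e_{\alpha}$ be the local frame of $L^{s_{0}}$ induced by the chart. Define
\[
e|_{U_{\alpha}}:=|h_{\alpha}|^{-s_{0}}\,e_{\alpha}.
\]

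\textbf{Gluing.} On overlaps, $h_{\alpha}=h_{\beta}\det(\partial x^{\beta}/\partial x^{\alpha})$. Raising absolute values to the power $-s_{0}$ reproduces exactly the transition function $t_{\alpha\beta}$ with $s=s_{0}$. So the local pieces glue to a global positive section $e\in\Gamma(L^{s_{0}})$.

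This matching of absolute values against the transition functions is the only point needing care. It is where the absolute value in the definition of $L^{s}$ is essential, because $\nu$ itself may be negative in some charts.

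\textbf{Comparison with $\nu^{\ast}$.} By the identification $L^{s}\otimes L^{s'}\cong L^{s+s'}$, the product $e^{\otimes 2(n+1)}$ is a nowhere vanishing section of $L^{-1}=L^{\ast}$. In each chart its local expression is $|h_{\alpha}|^{-1}$ times the frame induced by $(dx^{\alpha}_{1}\wedge\cdots\wedge dx^{\alpha}_{4n})^{\ast}$.

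On the other hand, $\nu^{\ast}=h_{\alpha}^{-1}(dx^{\alpha}_{1}\wedge\cdots\wedge dx^{\alpha}_{4n})^{\ast}$. Therefore
\[
e^{\otimes 2(n+1)}=\operatorname{sgn}(h_{\alpha})\,\nu^{\ast}
\]
locally. The sign is locally constant.

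\textbf{Fixing the sign.} On each connected component of $M$, the function $e^{\otimes 2(n+1)}/\nu^{\ast}$ is a continuous function with values in $\{\pm1\}$, so it is constant, say $\varepsilon$. If $\varepsilon=1$, then $(e,\nu)$ is an associated pair. If $\varepsilon=-1$, then $e^{\otimes 2(n+1)}=(-\nu)^{\ast}$, so $(e,-\nu)$ is an associated pair.

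If $M$ is disconnected, take $\varepsilon$ componentwise. The same argument applies verbatim on each component, with the statement read componentwise, or with connectedness assumed as is implicit in the paper.
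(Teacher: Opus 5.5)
The paper gives no proof of this lemma. Your strategy is the natural one: take a $2(n+1)$-th root of $|\nu|^{-1}$ chartwise, glue using the absolute values in $t_{\alpha\beta}$, then fix the sign on components. As written, however, the exponent has the wrong sign, and your gluing and comparison steps use opposite conventions for $t_{\alpha\beta}$.

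\textbf{The error.} With $e|_{U_{\alpha}}=|h_{\alpha}|^{-s_{0}}e_{\alpha}$, the local coefficient of $e^{\otimes 2(n+1)}$ is $|h_{\alpha}|^{-2(n+1)s_{0}}=|h_{\alpha}|$, not $|h_{\alpha}|^{-1}$ as you assert. Your gluing check works only because you read the transition functions on coefficients as $c_{\alpha}=t_{\alpha\beta}c_{\beta}$. Under that reading $L^{1}$ transforms like $\Lambda^{4n}TM$ and $L^{-1}$ like $\Lambda^{4n}T^{\ast}M$. So the section you built satisfies $e^{\otimes 2(n+1)}=|\nu|$, not $\pm\nu^{\ast}$.

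The paper needs the other convention, $L^{1}=L$ and $L^{-1}=L^{\ast}$. This is how $L^{1}$ is used in the proof of Lemma~\ref{q_conn_eq2} and in the definition of associated pairs, and it amounts to $c_{\beta}=t_{\alpha\beta}c_{\alpha}$. Under this convention your local pieces do not glue, since that would require $|\det(\partial x^{\beta}/\partial x^{\alpha})|^{2s_{0}}=1$.

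\textbf{The repair.} Set $e|_{U_{\alpha}}:=|h_{\alpha}|^{s_{0}}e_{\alpha}$, i.e.\ $e=|\nu|^{s_{0}}$. From $h_{\beta}=h_{\alpha}/\det(\partial x^{\beta}/\partial x^{\alpha})$ you get $|h_{\beta}|^{s_{0}}=|\det(\partial x^{\beta}/\partial x^{\alpha})|^{-s_{0}}|h_{\alpha}|^{s_{0}}=t_{\alpha\beta}|h_{\alpha}|^{s_{0}}$, so the pieces glue. Then $e^{\otimes 2(n+1)}$ has coefficient $|h_{\alpha}|^{2(n+1)s_{0}}=|h_{\alpha}|^{-1}$, whence $e^{\otimes 2(n+1)}=\operatorname{sgn}(h_{\alpha})\,\nu^{\ast}$.

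\textbf{The sign-fixing step.} Identifying the chart frame of $L^{-1}$ with $(dx^{\alpha}_{1}\wedge\cdots\wedge dx^{\alpha}_{4n})^{\ast}$ is consistent across charts only if $\det(\partial x^{\beta}/\partial x^{\alpha})>0$ on overlaps. The reason is that the transition functions of $L^{s}$ involve $|\det|$, while those of $L^{\ast}$ involve $\det$. So work with an oriented atlas, which exists because $\nu$ does. Then $\operatorname{sgn}(h_{\alpha})$ is a well-defined locally constant function, and your componentwise argument finishes the proof.
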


\subsection{The Swann bundle}\label{sec_swann}
We recall the Swann bundle, since it is convenient to lift a twistor function to the Swann bundle. 
See \cite{S, PPS, CH} for the Swann bundle. 
Here we adapt the descriptions in \cite{CH} for the Swann bundle as follows.

Let $S=S(M)$ be the principal ${\rm SO(3)}$-bundle of admissible frames 
 $(I_{1}$, $I_{2}$, $I_{3})$ over a quaternionic manifold $(M,Q)$. 
The bundle projection of $S$ is denoted by $\pi_{S}$. 
We take a basis $(e_{1},e_{2},e_{3})$ of 
${\mathbb R}^{3} \cong {\rm Im}\, {\mathbb H} \cong \mathfrak{sp}(1)
\cong \mathfrak{so}(3)$ so that  
\[ [e_{\alpha},e_{\beta}]=2e_{\gamma}  \] 
for any cyclic permutation $(\alpha,\beta,\gamma)$. 
Hereafter $(\alpha,\beta,\gamma)$ 
will be always a cyclic permutation, whenever 
the three letters appear in an expression. 
The fundamental vector field generated by $e_{\alpha}$ is denoted by 
$\widetilde{e}_{\alpha}$ ($\alpha=1,2,3$).
The quaternionic connection induces a principal connection 
$\theta:TS \to \mathfrak{so}(3)$. 
Moreover, we consider the principal ${\mathbb R}^{>0}$-bundle 
$S_{0}:=(\Lambda^{4n}(T^{\ast}M) \setminus \{ 0 \})/\{ \pm 1 \}$ over $M$. 
The bundle projection of $S_{0}$ is denoted by $\pi_{S_{0}}$. 
A quaternionic connection induces 
also a principal connection
$\theta_{0}:TS_{0} \to {\mathbb R} = \mathrm{Lie}\, (\mathbb{R}^{>0})$. 
The product $S_{0} \times S$ is a principal 
${\mathbb R}^{>0} \times {\rm SO}(3)(={\mathbb H}^{\ast}/\{ \pm 1\})$-bundle over 
$M \times M$. 
The ${\mathbb R}^{4} ( \cong {\mathbb R} \oplus \mathfrak{so}(3))$-valued 
one-form $(\theta_{0} \circ pr_{TS_{0}} ,  \theta \circ pr_{TS})
=(\theta_{0} \circ pr_{TS_{0}}, 
\theta_{1}\circ pr_{TS},\theta_{2}\circ pr_{TS},\theta_{3}\circ pr_{TS})$ 
is a principal connection on $S_{0} \times S$, where 
$pr_{TS_{0}}$ (resp. $pr_{TS}$) is the projection from 
$T(S_{0} \times S) \cong TS_{0} \times TS$ onto $TS_{0}$ (resp. $TS$). 
Let $\triangle: M \to M \times M$ be the diagonal map defined by 
$\triangle(x)=(x,x)$ for each $x \in M$. 
The pullback bundle 
\[ \hat{M}:=\triangle^{\ast} (S_{0} \times S)
= \{ (x,(\rho , s))\in M \times (S_0 \times S) \mid x= \pi_{S_0}(\rho ) = \pi_S (s)\}\] 
is a principal 
${\mathbb R}^{>0} \times {\rm SO}(3)$-bundle over 
$M$ and $\bar{\theta}:=\triangle_{\#}^{\ast}
(\theta_{0}\circ pr_{TS_{0}},\theta \circ pr_{TS})$
is a principal connection on $\hat{M}$, 
where $\triangle_{\#} : \hat{M} \to S_{0} \times S$ 
is the canonical bundle map. 
The bundle projection of $\hat{M}$ onto $M$ is denoted by $\hat{\pi}$.

Set $e_{0}:=1 \in  {\mathbb R}\, (\cong T_{1} {\mathbb R}^{>0} )$ 
and $Z_{0}^{c}:=c \, \widetilde{e}_{0}$ 
for a nonzero real number $c$.
We denote the horizontal lifts relative to the connections 
$\bar{\theta}$ by $(\,\,\,\,)^{\bar{h}}$. 
An almost hypercomplex structure 
$(\hat{I}^{\bar{\theta}, c}_{1},\hat{I}^{\bar{\theta}, c}_{2},
\hat{I}^{\bar{\theta}, c}_{3})$ 
on $\hat{M}$ is defined by 
\[ \hat{I}^{\bar{\theta}, c}_{\alpha}Z^{c}_{0}=- Z_{\alpha},\quad  
\hat{I}^{\bar{\theta}, c}_{\alpha}Z_{\alpha}= Z_{0}^{c}, \quad  
\hat{I}^{\bar{\theta}, c}_{\alpha}Z_{\beta}=Z_{\gamma}, \quad  
\hat{I}^{\bar{\theta}, c}_{\alpha}Z_{\gamma}=-Z_{\beta} \]
and
\[ (\hat{I}^{\bar{\theta}, c}_{\alpha})_{(x,(\rho,s))} (X)
=(I_{\alpha} (\hat{\pi}_{\ast} X))^{\bar{h}}_{(x,(\rho,s))} \]
for all horizontal vector $X$ at $(x,(\rho,s)) \in \hat{M}$, 
where $Z_{\alpha}=\widetilde{e}_{\alpha}$ and $s=(I_{1},I_{2},I_{3})$. 
Note that the triple 
$(\hat{I}^{\bar{\theta}, c}_{1},\hat{I}^{\bar{\theta}, c}_{2},\hat{I}^{\bar{\theta}, c}_{3})$
depends on the connection form $\bar{\theta}$
and $c$ in general. If $c=-4(n+1)$, then 
the almost hypercomplex structure is integrable 
and independent of $\nabla$. 
When $c \neq - 4(n+1)$, 
the almost hypercomplex structure is integrable  
if and only if $(Ric^{\nabla})^{a}$ is $Q$-hermitian (\cite{CH}). 
The bundle $\hat{M}$ over $M$ is called the {\cmssl Swann bundle}. 
We consider the lifted map of a twistor function on $M$ to the Swann bundle $\hat{M}$.  

\begin{lemma}\label{qh_moment}
Let $(M,Q)$ be a quaternionic manifold, and let $\hat{M}$ be the Swann bundle of $M$. 
For  a twistor function $\mu=\bar{\mu} \otimes e$ of $(M,Q)$, 
we define a function $\hat{\mu}:\hat{M} \to \mathbb{R}^{3}$ 
by
\begin{align*}
\hat{\mu}((r,(I_{2},I_{2},I_{3})))
&:=(\hat{\mu}_{1}(r,(I_{2},I_{2},I_{3})),\hat{\mu}_{2}(r,(I_{2},I_{2},I_{3})), 
\hat{\mu}_{3}(r,(I_{2},I_{2},I_{3}))) \\
&:=(r^{\frac{2}{c}} ( \bar{\mu},I_{1} ), 
r^{\frac{2}{c}}( \bar{\mu},I_{2} ), r^{\frac{2}{c}} (\bar{\mu},I_{3} ))
\end{align*}
for $(r,(I_{2},I_{2},I_{3})) \in \hat{M}$. 
If we consider a quaternionic connection $\nabla$ such that $\nabla (e \otimes \cdots \otimes e)=0$ 
or $c=-4(n+1)$, then $\hat{\mu}$ satisfies the CR-condition, that is, 
\begin{align*}
d \hat{\mu}_{1} \circ \hat{I}_{1}^{\bar{\theta},c}=d \hat{\mu}_{2} \circ \hat{I}_{2}^{\bar{\theta},c}=
d \hat{\mu}_{3} \circ \hat{I}_{3}^{\bar{\theta},c}.
\end{align*}
In particular, suppose that a Lie group $G$ acts freely on $M$ 
and preserves the quaternionic structure $Q$, 
and that $\mu$ is a quaternionic moment map. 
Then $\hat{\mu}$ 
is a hypercomplex moment map for the induced action of $G$ 
on the Swann bundle.
\end{lemma}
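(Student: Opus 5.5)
The plan is to verify the CR-condition by evaluating the one-forms $d\hat{\mu}_{\alpha} \circ \hat{I}^{\bar{\theta},c}_{\alpha}$ on a frame of $T\hat{M}$. The tangent space at $(x,(r,s))$ splits as $\langle Z^{c}_{0}, Z_{1}, Z_{2}, Z_{3} \rangle \oplus \{\text{horizontal lifts } X^{\bar{h}}\}$, and $\hat{I}^{\bar{\theta},c}_{\alpha}$ preserves each summand. So it suffices to check the three-fold equality separately on vertical and on horizontal vectors.

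\emph{Step 1 (reduction to a volume-preserving connection).} First treat $\nabla$ with $\nabla(e \otimes \cdots \otimes e)=0$. Then $\nabla^{s_{0}}e=0$, and $\mu$ being a twistor function is equivalent to \eqref{tw_eq1}, namely $\nabla \bar{\mu}=\sum_{\alpha}(\xi \circ I_{\alpha}) \otimes I_{\alpha}$. For $c=-4(n+1)$, observe that $2/c=-\frac{1}{2(n+1)}=s_{0}$. Hence $r^{2/c}\bar{\mu}$ is exactly the pairing of the density $r$ with $\mu \in \Gamma(Q \otimes L^{s_{0}})$, so $\hat{\mu}$ does not depend on the choice of $\nabla$. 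By \cite{CH}, $\hat{I}^{\bar{\theta},c}_{\alpha}$ is also independent of $\nabla$ in this case. We may therefore replace $\nabla$ by the unique quaternionic connection preserving $e\otimes\cdots\otimes e$, and the second case reduces to the first.

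\emph{Step 2 (vertical directions).} The generator $Z^{c}_{0}=c\,\tilde{e}_{0}$ scales $r$, so $Z^{c}_{0}\hat{\mu}_{\alpha}=c\cdot\frac{2}{c}\hat{\mu}_{\alpha}=2\hat{\mu}_{\alpha}$. The field $Z_{\beta}$ infinitesimally rotates the frame, with $Z_{\beta}I_{\beta}=0$ and $Z_{\beta}I_{\gamma}=\pm 2I_{\alpha}$ according to $[e_{\alpha},e_{\beta}]=2e_{\gamma}$. This gives $Z_{\alpha}\hat{\mu}_{\alpha}=0$, $Z_{\alpha}\hat{\mu}_{\beta}=2\hat{\mu}_{\gamma}$ and $Z_{\alpha}\hat{\mu}_{\gamma}=-2\hat{\mu}_{\beta}$, with signs to be fixed by the convention. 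Using the defining relations of $\hat{I}^{\bar{\theta},c}_{\alpha}$, one then checks that $d\hat{\mu}_{\alpha}(\hat{I}_{\alpha}V)$ is independent of $\alpha$ for $V=Z^{c}_{0},Z_{1},Z_{2},Z_{3}$. For example, on $Z^{c}_{0}$ every value is $-Z_{\alpha}\hat{\mu}_{\alpha}=0$, and on $Z_{1}$ every value is $2\hat{\mu}_{1}$. This is a finite sign check, and I expect it to be the main obstacle, since the orientation conventions for $\tilde{e}_{\alpha}$ and for the action of $\mathrm{SO}(3)$ on frames must match the definition of $\hat{I}$.

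\emph{Step 3 (horizontal directions).} Horizontality with respect to $\theta_{0}$ means that $r$ is parallel along $X^{\bar{h}}$. Horizontality with respect to $\theta$ means that the frame is $\nabla$-parallel. Hence $d\hat{\mu}_{\alpha}(X^{\bar{h}})=r^{2/c}(\nabla_{X}\bar{\mu},I_{\alpha})=r^{2/c}\xi(I_{\alpha}X)$ by \eqref{tw_eq1} and orthonormality of admissible frames. Consequently
\[ d\hat{\mu}_{\alpha}(\hat{I}_{\alpha}X^{\bar{h}})=d\hat{\mu}_{\alpha}((I_{\alpha}X)^{\bar{h}})=r^{2/c}\xi(I_{\alpha}^{2}X)=-r^{2/c}\xi(X), \]
which is independent of $\alpha$. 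This proves the CR-condition.

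\emph{Step 4 (moment map).} If $G$ preserves $Q$ and $\mu$ is a quaternionic moment map, the induced action on $\hat{M}$ preserves $\hat{I}^{\bar{\theta},c}_{\alpha}$; when $c\neq -4(n+1)$ this uses a $G$-invariant connection preserving $e\otimes\cdots\otimes e$. The function $\hat{\mu}$ is $G$-equivariant because $\mu$ is. The CR-condition together with the moment map identity for $\mu$, after lifting the fundamental vector fields, is precisely the defining property of a hypercomplex moment map in Joyce's sense. So this step amounts to matching definitions.
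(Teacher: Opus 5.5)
Your proposal is correct and follows the paper's proof essentially step by step. Both use the same split into vertical directions (with $d\hat{\mu}(Z_{0}^{c})=2\hat{\mu}$ and the rotation formulas for $d\hat{\mu}(Z_{\alpha})$, whose signs the paper fixes via explicit matrices $e_{\alpha}$ and which agree with yours) and horizontal directions (parallel frame and constant $r$, giving $d\hat{\mu}_{\alpha}(\hat{I}^{\bar{\theta},c}_{\alpha}\tilde{Y})=-r^{2/c}\xi(Y)$), and the same reduction of the case $c=-4(n+1)$ to a connection preserving $e\otimes\cdots\otimes e$ via the $\nabla$-independence of the hypercomplex structure. Your weight observation $2/c=s_{0}$ is not needed for that reduction, since $\hat{\mu}$ is defined without reference to $\nabla$. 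The moment-map part is handled at the same level of detail as the paper's remark that the transversality condition is ``easy to verify.''
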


\begin{proof}
We have 
\begin{align*}
d \hat{\mu} (Z_{0}^{c}) 
&=(c r \frac{\partial}{\partial r}) \hat{\mu}=2 \hat{\mu}, \,\,\, \mbox{that is,} \,\,\,
 d \hat{\mu}_{\alpha} (Z_{0})=2 \hat{\mu}_{\alpha}. 
\end{align*}
Take a base $(e_{1},e_{2},e_{3})$ of $\mathfrak{so}(3)$ as 
\begin{align*}
e_{1}=
\begin{pmatrix}
   0 & 0 & 0 \\
   0 & 0 & -2 \\
   0 & 2 & 0
\end{pmatrix}, 
e_{2}=
\begin{pmatrix}
   0 & 0 & 2 \\
   0 & 0 & 0 \\
   -2 & 0 & 0
\end{pmatrix},
e_{3}=
\begin{pmatrix}
   0 & -2 & 0 \\
   2 & 0 & 0 \\
   0 & 0 & 0
\end{pmatrix}. 
\end{align*}
Since 
\begin{align*}
\exp(te_{1})=
\begin{pmatrix}
   1 & 0 & 0 \\
   0 & \cos 2t & -\sin 2t \\
   0 & \sin 2t & \cos 2t
\end{pmatrix}, 
\end{align*}
we have 
\begin{align*}
d \hat{\mu}(Z_{1})
=\left. \frac{d}{dt} \hat{\mu}(((I_{1},I_{2},I_{3}),r)\exp(te_{1})) \right|_{t=0}
=(0, 2\hat{\mu}_{3}, -2 \hat{\mu}_{2} )
\end{align*}
at $(r,(I_{1},I_{2},I_{3})) \in \hat{M}$. Similarly, it holds that 
\begin{align*}
d \hat{\mu}(Z_{2}) =(-2\hat{\mu}_{3}, 0, 2 \hat{\mu}_{2} ), 
d \hat{\mu}(Z_{3}) =(2\hat{\mu}_{2}, -2 \hat{\mu}_{1},0 ).  
\end{align*}
Using these equations, we can check 
\begin{align*}
d \hat{\mu}_{1} \circ \hat{I}_{1}^{\bar{\theta},c}=d \hat{\mu}_{2} \circ \hat{I}_{2}^{\bar{\theta},c}=
d \hat{\mu}_{3} \circ \hat{I}_{3}^{\bar{\theta},c}
\end{align*}
on vertical vectors. 
Next we take a tangent vector $Y$ at $x=\hat{\pi}((r,(I_{1},I_{2},I_{3}))) \in M$ and 
a curve $\gamma$ on $M$ such that $\gamma(0)=x$ and $\gamma^{\prime}(0)=Y$. 
If we consider a horizontal lift $\tilde{\gamma}$ of $\gamma$ starting from $(r,(I_{1},I_{2},I_{3}))$, 
then $\tilde{\gamma}(t)=(\bar{r}, (\bar{I}_{1}, \bar{I}_{2}, \bar{I}_{3}))_{\gamma(t)}$ gives 
a $\nabla$-parallel frame $(\bar{I}_{1}, \bar{I}_{2}, \bar{I}_{3})$ along $\gamma$ and we see that $\bar{r}$ is constant along $\tilde{\gamma}$. Then we have 
\begin{align*}
d \hat{\mu}(\tilde{Y}) &=(r^{\frac{2}{c}} ( \nabla_{Y} \mu,I_{1} ), 
r^{\frac{2}{c}} ( \nabla_{Y} \mu ,I_{2} ), 
r^{\frac{2}{c}} ( \nabla_{Y} \mu ,I_{3} )) \\
&=(r^{\frac{2}{c}} \xi(I_{1}Y), 
r^{\frac{2}{c}} \xi(I_{2}Y), 
r^{\frac{2}{c}} \xi(I_{3}Y)),
\end{align*}
and hence, 
\begin{align*}
d \hat{\mu}_{\alpha} (\hat{I}_{\alpha}^{\bar{\theta},c}\tilde{Y})=- r^{\frac{2}{c}} \xi(Y).
\end{align*}
This shows that $d \hat{\mu}_{\alpha} \circ \hat{I}_{\alpha}^{\bar{\theta},c}$ 
is independent of $\alpha$. When $c=-4(n+1)$, the hypercomplex structure 
$(\hat{I}_{1}^{\bar{\theta},c},\hat{I}_{2}^{\bar{\theta},c},\hat{I}_{3}^{\bar{\theta},c})$ is independent of a choice of 
a quaternionic connection. Therefore, it suffices to check by taking a quaternionic connection 
$\nabla$ such that $\nabla e \otimes \cdots \otimes e=0$ as above. 
It is easy to verify the transversality condition for $\hat{\mu}$.
\end{proof}

\begin{remark}
{\rm 
See \cite{J} for the definitions of quaternionic and hypercomplex moment maps. 
If $\mu$ is a quaternionic moment map, 
then the hypercomplex quotient with respect to $\hat{\mu}$ 
is the Swann bundle of the quaternionic quotient by $\mu$ 
\cite[Proposition 5.1]{J}. 
}
\end{remark}

Take a local section $s: U \to S$ defined on an open set $U \subset M$. 
The pullbacks of $\theta^{i}$ by $s$ to $U$ are denoted by $\theta^{i,U}$, and 
we define the one-forms $\theta^{i,U}_{\alpha}$ by
\[
\theta^{i,U} = s^{\ast} \theta^{i}
= \frac{1}{2} \sum_{\alpha=1}^{3} \theta^{i,U}_{\alpha} e_{\alpha}.
\]
Let $\Omega$ be the curvature form of $\theta$. 
We write
\[
\Omega = \sum_{\alpha=1}^{3} \Omega_{\alpha} e_{\alpha},
\]
and denote its pullback by $s$ by $\Omega^{U}$. 
We then define the two-forms $\Omega_{\alpha}^{U}$ by
\[
\Omega^{U} 
= \frac{1}{2} \sum_{\alpha=1}^{3} \Omega_{\alpha}^{U} e_{\alpha}.
\]

\begin{lemma}\label{5_12}
Let $\nabla$ be a quaternionic connection with $\nabla \nu=0$ for some volume element $\nu$, and let 
$X$ be a vector field preserving $Q$ such that $L_{X} \nabla =0$.  
Setting $\bar{\mu}_{\alpha}:=( (\nabla X)^{Q},I_{\alpha} )$ for $\alpha=1,2,3$, we have 
\begin{align} \label{5_1}
    d \bar{\mu}_{\alpha}
&=-\frac{1}{2} \iota_{X}\Omega^{U}_{\alpha}-\bar{\mu}_{\gamma} \theta^{U}_{\beta} 
+\bar{\mu}_{\beta} \theta^{U}_{\gamma},
\end{align}
where $s=(I_{1},I_{2},I_{3}):U \to S$ is a section defined on $U$ and $\theta^{U}=s^{\ast} \theta$.
\end{lemma}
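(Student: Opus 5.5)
The plan is to compute $d\bar{\mu}_{\alpha}(Y)$ directly from the definition, differentiating the trace formula for the inner product. Write $\bar{\mu}_{\alpha}=((\nabla X)^{Q},I_{\alpha})=-\frac{1}{4n}\mathrm{Tr}((\nabla X)^{Q}I_{\alpha})$. First I will note that $\mathrm{Tr}(f_{Z}I_{\alpha})=0$ for $f_{Z}\in Z(Q)$. Indeed, conjugating by $I_{\beta}$, which commutes with $f_{Z}$ and anticommutes with $I_{\alpha}$, gives $\mathrm{Tr}(f_{Z}I_{\alpha})=-\mathrm{Tr}(f_{Z}I_{\alpha})$. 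Hence
\[ \bar{\mu}_{\alpha}=-\frac{1}{4n}\mathrm{Tr}((\nabla X)I_{\alpha}). \]
Differentiating along $Y\in TU$ gives two terms, exactly as in the proof of Lemma \ref{der_nab_q}:
\[ d\bar{\mu}_{\alpha}(Y)=-\frac{1}{4n}\mathrm{Tr}\big((\nabla_{Y}(\nabla X))I_{\alpha}\big)-\frac{1}{4n}\mathrm{Tr}\big((\nabla X)(\nabla_{Y}I_{\alpha})\big). \]
Now, however, the second term is kept rather than cancelled, because the frame $(I_{1},I_{2},I_{3})$ is not parallel.

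For the second term, I express $\nabla_{Y}I_{\alpha}$ through the local connection forms $\theta^{U}=s^{\ast}\theta=\frac12\sum\theta^{U}_{\alpha}e_{\alpha}$. With the matrices $e_{\alpha}$ fixed in the proof of Lemma \ref{qh_moment}, the factors $\frac12$ and $2$ cancel, and one gets $\nabla_{Y}I_{\alpha}=\theta^{U}_{\gamma}(Y)I_{\beta}-\theta^{U}_{\beta}(Y)I_{\gamma}$, up to the sign convention that must be checked. Inserting this and using the formula for $\bar{\mu}_{\beta},\bar{\mu}_{\gamma}$ above yields $-\bar{\mu}_{\gamma}\theta^{U}_{\beta}(Y)+\bar{\mu}_{\beta}\theta^{U}_{\gamma}(Y)$.

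For the first term, I use $L_{X}\nabla=0$, which gives $\nabla_{Y}(\nabla X)=H^{\nabla}_{Y,\cdot}X=-R^{\nabla}_{X,Y}$, as in Lemma \ref{der_nab_q}. Since $\nabla$ is quaternionic, $R^{\nabla}_{X,Y}\in N(Q)=Q+Z(Q)$. By the trace observation, only its $Q$-component contributes. That component is determined by the curvature of the induced connection on $Q$: $[R^{\nabla}_{X,Y},\cdot\,]$ acting on $Q$ is the $\mathfrak{so}(3)$-valued curvature $\Omega^{U}(X,Y)=\frac12\sum\Omega^{U}_{\alpha}(X,Y)e_{\alpha}$. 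Because $[I_{\beta},I_{\gamma}]=2I_{\alpha}$ matches $[e_{\beta},e_{\gamma}]=2e_{\alpha}$, this identifies $(R^{\nabla}_{X,Y})^{Q}=\frac12\sum_{\alpha}\Omega^{U}_{\alpha}(X,Y)I_{\alpha}$. Then, using $\mathrm{Tr}(I_{\alpha}I_{\alpha})=-4n$,
\[ -\frac{1}{4n}\mathrm{Tr}(-R^{\nabla}_{X,Y}I_{\alpha})=\frac{1}{8n}\Omega^{U}_{\alpha}(X,Y)\,\mathrm{Tr}(I_{\alpha}^{2})=-\frac12\Omega^{U}_{\alpha}(X,Y)=-\frac12(\iota_{X}\Omega^{U}_{\alpha})(Y). \]
Adding the two contributions gives \eqref{5_1}. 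The condition $\nabla\nu=0$ plays a minor role here. It guarantees that $\nabla$ is the distinguished connection, so that the formula is consistent with the later use of $\bar{\mu}$ as a twistor function. It also means the trace part of $R^{\nabla}_{X,Y}$, which comes from $(Ric^{\nabla})^{a}$, causes no trouble; in any case that part lies in $Z(Q)$ and drops out.

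I expect the main obstacle to be bookkeeping rather than ideas. One must match the normalization of $\theta^{U}_{\alpha}$ and $\Omega^{U}_{\alpha}$ (the factors $\frac12$ and the matrices $e_{\alpha}$ acting on the frame) with the endomorphism-level identities $\nabla I_{\alpha}$ and $(R^{\nabla})^{Q}$. I would fix these first by checking $\nabla_{Y}I_{1}$ against the one-parameter group $\exp(te_{1})$ computed in Lemma \ref{qh_moment}, and then verify that the curvature identification satisfies the structure equation $\Omega=d\theta+\frac12[\theta,\theta]$ with the same constants.
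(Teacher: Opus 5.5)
Your proposal is correct and is exactly the ``straightforward calculation'' the paper invokes without giving details. The Hessian term yields $-\frac{1}{2}\iota_{X}\Omega^{U}_{\alpha}$ via $\nabla_{Y}(\nabla X)=-R^{\nabla}_{X,Y}$ and $(R^{\nabla}_{X,Y})^{Q}=\frac{1}{2}\sum_{\alpha}\Omega^{U}_{\alpha}(X,Y)I_{\alpha}$, and the frame term yields $-\bar{\mu}_{\gamma}\theta^{U}_{\beta}+\bar{\mu}_{\beta}\theta^{U}_{\gamma}$ via $\nabla I_{\alpha}=\theta^{U}_{\gamma}\otimes I_{\beta}-\theta^{U}_{\beta}\otimes I_{\gamma}$; this is the sign forced by the right action $s\mapsto s\exp(te_{1})$ in Lemma \ref{qh_moment}, your factors of $\frac{1}{2}$ and $2$ check out, and you are right that $\nabla\nu=0$ is not actually used.
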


\begin{proof}
The straightforward calculation leads to the conclusion.  
\end{proof}

\begin{lemma}\label{5_13}
Let $\nabla$ be a quaternionic connection with $\nabla \nu=0$ for some volume element $\nu$, and let 
$X$ be a vector field preserving $Q$ such that $L_{X} \nabla =0$. 
We assume that $Ric^{\nabla}$ is $Q$-hermite. 
If $Ric^{\nabla}_{x} \neq 0$ for any $x \in M$, then 
the solution $(g_{1}, g_{2}, g_{3}):U(\subset M) \to \mathbb{R}$ of \eqref{5_1} is unique (if there exists).   
\end{lemma}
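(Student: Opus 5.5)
Since \eqref{5_1} is a linear system in $(g_{1},g_{2},g_{3})$, uniqueness reduces to showing that the corresponding homogeneous system has only the trivial solution. Suppose $(g_{\alpha})$ and $(g'_{\alpha})$ both solve \eqref{5_1} on $U$, and set $h_{\alpha}:=g_{\alpha}-g'_{\alpha}$. The terms $-\frac{1}{2}\iota_{X}\Omega^{U}_{\alpha}$ cancel, which leaves
\begin{align*}
dh_{\alpha}=-h_{\gamma}\theta^{U}_{\beta}+h_{\beta}\theta^{U}_{\gamma}.
\end{align*}
By the computation in Lemma \ref{5_12}, read backwards, this says exactly that the section $h:=\sum_{\alpha}h_{\alpha}I_{\alpha}$ of $Q|_{U}$ is $\nabla$-parallel. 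Indeed, $\nabla I_{\alpha}$ is expressed through $\theta^{U}_{\beta},\theta^{U}_{\gamma}$ with precisely the coefficients that appear in the equation. So the first step is to derive $\nabla h=0$ on $U$ from the homogeneous equation.

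Next we use that a parallel section of $Q$ commutes with curvature: $[R^{\nabla}_{Y,Z},h]=0$ for all $Y,Z$. We decompose $R^{\nabla}_{Y,Z}=R^{Q}_{Y,Z}+R^{Z(Q)}_{Y,Z}$, where the $Q$-part is the curvature of $Q$, namely $\Omega$ expressed via $\sum_{\alpha}\Omega^{U}_{\alpha}(Y,Z)I_{\alpha}$ up to normalization. The $Z(Q)$-part commutes with $h$ automatically. We are therefore left with $[\sum_{\alpha}\Omega^{U}_{\alpha}(Y,Z)I_{\alpha},h]=0$. In $\mathfrak{sp}(1)$, two elements commute only if they are parallel, so at each point every $\Omega^{U}(Y,Z)$, viewed as an element of $\mathfrak{so}(3)$, must be proportional to $h$ whenever $h\neq 0$.

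The remaining step, which is the main obstacle, is to show that $\nabla\nu=0$, $Q$-hermitian Ricci, and $Ric^{\nabla}\neq 0$ force the $Q$-part of the curvature to span all of $Q$ at each point, or at least to be nondegenerate enough to exclude a nonzero parallel $h$. I would use the decomposition $R^{\nabla}=W+R^{B^{\nabla}}$. The Weyl part $W$ takes values in $\mathfrak{sl}(n,\mathbb{H})$ and so has no $Q$-component. Since $\nabla\nu=0$ makes $Ric^{\nabla}$ symmetric, and it is $Q$-hermitian by assumption, $B^{\nabla}$ is a nonzero multiple of $Ric^{\nabla}$ (at least where $\Pi_{h}$ acts as the identity). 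Then the $Q$-component of $R^{B}_{Y,Z}$ is $\sum_{\alpha}c\,(B(Y,I_{\alpha}Z)-B(Z,I_{\alpha}Y))I_{\alpha}$, that is, $2c\sum_{\alpha}B(Y,I_{\alpha}Z)I_{\alpha}$ by hermitian symmetry, with $c\neq0$.

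Suppose $h=h_{1}I_{1}\neq0$ at a point $x$, after choosing the frame so that $h$ is a multiple of $I_{1}$. Then $B(Y,I_{2}Z)=0$ for all $Y,Z$, and since $I_{2}$ is invertible this gives $B_{x}=0$, hence $Ric^{\nabla}_{x}=0$, a contradiction. Thus $h=0$ at every point, and the solution is unique. The computation I expect to require care is the precise $Q$-component of $R^{B}$, computed from the formula for $S^{\xi}$ via $[S^{\xi}_{X},I_{1}]=2\xi(I_{2}X)I_{3}-2\xi(I_{3}X)I_{2}$, together with checking that the constant $c$ does not vanish.
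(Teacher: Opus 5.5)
Your argument is correct and is essentially the paper's. The paper applies $d$ to the homogeneous system and uses $\Omega^{U}_{\alpha}=d\theta^{U}_{\alpha}+\theta^{U}_{\beta}\wedge\theta^{U}_{\gamma}$ to get $g_{\beta}\Omega^{U}_{\gamma}-g_{\gamma}\Omega^{U}_{\beta}=0$, which is exactly the component form of your condition $[R^{\nabla}_{Y,Z},h]=0$, and then settles the Ricci step in one line. Your last step fills that line in correctly, and the constant you were worried about is harmless: $\Omega_{\alpha}(Y,Z)=2(B^{\nabla}(Y,I_{\alpha}Z)-B^{\nabla}(Z,I_{\alpha}Y))$ (quoted from \cite{AM1} in Section \ref{sec_mu_conn}) and $B^{\nabla}=\frac{1}{4(n+2)}Ric^{\nabla}$ for symmetric $Q$-hermitian Ricci, so $\Omega_{\alpha}(Y,Z)=\frac{1}{n+2}Ric^{\nabla}(Y,I_{\alpha}Z)$.
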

\begin{proof}
It is sufficient to check that the solution of  
$d g_{\alpha}
=-g_{\gamma} \theta^{U}_{\beta} 
+g_{\beta} \theta^{U}_{\gamma}$ 
is only trivial one. Applying the exterior derivative $d$ to it and using $\Omega^{U}_{\alpha}=d \theta^{U}_{\alpha} + \theta^{U}_{\beta} \wedge  \theta^{U}_{\gamma}$, we have 
\begin{align} 
g_{\beta}  \Omega^{U}_{\gamma} - g_{\gamma}  \Omega^{U}_{\beta} =0.  
\end{align}
The assumption for $Ric^{\nabla}$ means that $g_{\alpha}=0$. 
\end{proof}

We have known that the lift $\hat{X}$ of $X$ to $\hat{M}$ is tangent to $S$ (see \cite{CH}).

\begin{lemma}\label{5_14}
Let $\nabla$ be a quaternionic connection, and let 
$X$ be a vector field preserving $Q$ such that $L_{X} \nabla =0$. 
The lift $\hat{X}=\tilde{X} + \sum_{\alpha} f_{\alpha} Z_{\alpha}$ of $X$ to the Swann bundle satisfies 
\begin{align}\label{5_2}
    d f_{\alpha}
&=-\iota_{\tilde{X}}\Omega_{\alpha}-2 f_{\gamma} \theta_{\beta} 
+2 f_{\beta} \theta_{\gamma}. 
\end{align}
\end{lemma}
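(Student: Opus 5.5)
The plan is to describe $\hat{X}$ explicitly and then differentiate its vertical components. The lift of $X$ to $\hat{M}$ is built from the flow of $X$ acting on admissible frames by $\varphi_{t\ast}$-conjugation, $I_{\alpha}\mapsto \varphi_{t\ast}I_{\alpha}\varphi_{t\ast}^{-1}$, and on volume forms by push-forward. By \cite{CH}, $\hat{X}$ is tangent to the $S$-factor, so it has no $Z_{0}$-component. Splitting $\hat{X}$ with the connection $\bar\theta$ gives $\hat{X}=\tilde{X}+\sum_{\alpha}f_{\alpha}Z_{\alpha}$, with $\tilde{X}$ the horizontal lift and
\[ f_{\alpha}=\theta_{\alpha}(\hat{X}), \]
normalized by the convention $\theta=\sum_{\alpha}\theta_{\alpha}e_{\alpha}$ on $S$. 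So \eqref{5_2} should follow by computing $df_{\alpha}=d(\iota_{\hat{X}}\theta_{\alpha})$ with Cartan's formula:
\[ d(\iota_{\hat X}\theta_{\alpha})=L_{\hat X}\theta_{\alpha}-\iota_{\hat X}d\theta_{\alpha}. \]

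First I show $L_{\hat X}\theta=0$. The flow of $\hat X$ is the bundle automorphism of $S$ induced by the affine transformations $\varphi_{t}$, and these preserve $\nabla$ because $L_{X}\nabla=0$. The principal connection induced by $\nabla$ is therefore invariant under this flow. This is the one place where the hypothesis enters.

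Next I expand $d\theta_{\alpha}$ with the structure equation. Since $[e_{\beta},e_{\gamma}]=2e_{\alpha}$,
\[ d\theta=\Omega-\tfrac12[\theta\wedge\theta], \]
which in components reads
\[ d\theta_{\alpha}=\Omega_{\alpha}-2\,\theta_{\beta}\wedge\theta_{\gamma}. \]
Contracting with $\hat X$ gives
\[ \iota_{\hat X}d\theta_{\alpha}=\iota_{\hat X}\Omega_{\alpha}-2f_{\beta}\theta_{\gamma}+2f_{\gamma}\theta_{\beta}. \]
Because $\Omega$ is horizontal, $\iota_{\hat X}\Omega_{\alpha}=\iota_{\tilde X}\Omega_{\alpha}$. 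Putting these together yields
\[ df_{\alpha}=-\iota_{\tilde X}\Omega_{\alpha}-2f_{\gamma}\theta_{\beta}+2f_{\beta}\theta_{\gamma}, \]
which is \eqref{5_2}.

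The main obstacle is bookkeeping of normalizations. I have to check the factor $2$ and the sign in the structure equation against the matrices $e_{\alpha}$ used in the proof of Lemma \ref{qh_moment}. I also have to confirm that $\Omega=\sum\Omega_{\alpha}e_{\alpha}$ has no extra factor $\tfrac12$ on $S$ itself (that factor appears only in the pulled-back $\Omega^{U}$). The sign convention for the bracket term must match as well. I will fix all of these by evaluating on the fundamental fields $Z_{\beta}$, $Z_{\gamma}$: there $d\theta_{\alpha}(Z_{\beta},Z_{\gamma})=-\theta_{\alpha}([Z_{\beta},Z_{\gamma}])=-2$, which confirms the coefficient $-2\theta_{\beta}\wedge\theta_{\gamma}$.
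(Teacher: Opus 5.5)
Your proposal is correct and follows the paper's route. The paper's proof says only that the identity "can be obtained by $L_{\hat{X}}\theta=0$", and your computation supplies exactly the omitted steps: Cartan's formula, the structure equation $d\theta_{\alpha}=\Omega_{\alpha}-2\theta_{\beta}\wedge\theta_{\gamma}$, and horizontality of $\Omega$. Your normalizations are also consistent with the rest of the paper, since pulling back by $s$ with $s^{\ast}\theta_{\alpha}=\tfrac12\theta^{U}_{\alpha}$ recovers \eqref{5_1}.
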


\begin{proof}
It can be obtained by $L_{\hat{X}} \theta=0$. 
\end{proof}

\begin{lemma}\label{m_fun}
Let $\nabla$ be a quaternionic connection, and denote 
$X$ by a vector field preserving $Q$ such that $L_{X} \nabla =0$.  
Let $(e,\nu)$ is the associated pair. 
If $\nabla \nu=0$ and $Ric^{\nabla}$ is Q-hermite, 
then $\mu=f_{Q}^{\nabla} \otimes e$ is a twistor function. 
\end{lemma}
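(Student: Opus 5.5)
Since $\nabla\nu=0$ and $(e,\nu)$ is an associated pair, we have $\nabla^{s_0}e=0$. So by the equivalence of \eqref{tw_eq} and \eqref{tw_eq1}, it suffices to show
\[
\nabla f_Q^{\nabla}=\sum_{\alpha=1}^{3}(\xi\circ I_\alpha)\otimes I_\alpha
\]
for some one-form $\xi$. Lemma \ref{der_nab_q} applies directly because $L_X\nabla=0$, and it gives
\[
\nabla_Y f_Q^{\nabla}=\frac{1}{4n}\sum_{\alpha=1}^{3}\bigl(\mathrm{Tr}\,R^{\nabla}_{X,Y}\circ I_\alpha\bigr)I_\alpha .
\]
The task is therefore to show that the coefficient one-forms $\eta_\alpha(Y):=\frac{1}{4n}\mathrm{Tr}(R^{\nabla}_{X,Y}\circ I_\alpha)$ have the form $\eta_\alpha=\xi\circ I_\alpha$ for a single $\xi$. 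Equivalently, the quantity $\eta_\alpha\circ I_\alpha^{-1}=-\eta_\alpha\circ I_\alpha$ must be independent of $\alpha$, and then we take $\xi:=-\eta_1\circ I_1$.

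To compute $\mathrm{Tr}(R^{\nabla}_{X,Y}I_\alpha)$, I will use the standard decomposition of the curvature of a quaternionic connection, $R^{\nabla}=W+R^{B^{\nabla}}$, with the Weyl part $W$ taking values in $\mathfrak{sl}(n,\mathbb{H})$. The $W$-part is then trace-free against every $I_\alpha$, so $\mathrm{Tr}(W_{X,Y}I_\alpha)=0$. Only $R^{B}_{X,Y}=S^{B(Y,\cdot)}_X-S^{B(X,\cdot)}_Y$ contributes, and a direct computation with the formula for $S^\xi$ gives $\mathrm{Tr}(S^{\eta}_{X}I_\alpha)=c_n\,\eta(I_\alpha X)$ for a constant $c_n$ depending only on $n$. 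This yields
\[
\mathrm{Tr}(R^{\nabla}_{X,Y}I_\alpha)=c_n\bigl(B(Y,I_\alpha X)-B(X,I_\alpha Y)\bigr).
\]

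Next I use the hypotheses. Since $\nabla\nu=0$, $Ric^{\nabla}$ is symmetric: $(Ric^\nabla)^a$ is a multiple of the curvature of the induced connection on $L$, which is flat here. Together with $Q$-hermiticity this gives $\Pi_h(Ric)=Ric$, so $B^\nabla$ is a constant multiple of $Ric^\nabla$ and is symmetric and $Q$-hermitian. Then
\[
B(Y,I_\alpha X)-B(X,I_\alpha Y)=2B(Y,I_\alpha X),
\]
using $B(X,I_\alpha Y)=-B(I_\alpha X,Y)$. Hence $\eta_\alpha(Y)=c'\,B(I_\alpha X,Y)$, and
\[
-\eta_\alpha(I_\alpha Y)=-c'\,B(I_\alpha X,I_\alpha Y)=-c'\,B(X,Y),
\]
which is independent of $\alpha$. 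So $\xi=-c'\,B(X,\cdot)$ works, and $\mu$ is a twistor function.

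The main obstacle is the trace computation. One step is to check that $\mathrm{Tr}(W_{X,Y}I_\alpha)=0$; this is the vanishing of the $\mathfrak{sp}(1)$-component of the Weyl curvature (cf.\ \cite{AM1}), which I will cite. The other is to pin down the constant $c_n$ and its sign, which I will do by an explicit contraction of $S^\eta_X I_\alpha$ using the quaternionic relations. Lemmas \ref{5_12} and \ref{5_14} give an alternative route via the $\mathfrak{so}(3)$-curvature $\Omega_\alpha$ if the direct computation proves awkward. In that route $\iota_X\Omega_\alpha$ plays the role of $\eta_\alpha$, and $Q$-hermiticity forces $\Omega_\alpha$ to be proportional to $Ric(I_\alpha\cdot,\cdot)$.
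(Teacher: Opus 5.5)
Your proposal is correct and follows the paper's route: the paper also reduces, via Lemma \ref{der_nab_q} and $\nabla^{s_{0}}e=0$, to showing that $\mathrm{Tr}\,R^{\nabla}_{X,I_{\alpha}(\cdot)}\circ I_{\alpha}$ is independent of $\alpha$, and it obtains this by citing the curvature formulas of \cite{AM1} together with $Q$-hermiticity. Your explicit trace computation ($c_{n}=4n$, giving $\eta_{\alpha}=2B^{\nabla}(I_{\alpha}X,\cdot)$) simply fills in that citation; note that the symmetry of $Ric^{\nabla}$ is not actually needed, since a $Q$-hermitian skew part cancels in $B^{\nabla}(Y,I_{\alpha}X)-B^{\nabla}(X,I_{\alpha}Y)$ anyway.
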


\begin{proof}
From (1.5.3) and \cite[Corollary 1.5]{AM1}, it holds that 
\begin{align*}
 \mathrm{Tr} R^{\nabla}_{X, I_{1}(\, \cdot \, )} \circ I_{1}
 =\mathrm{Tr} R^{\nabla}_{X, I_{2}(\, \cdot \, )} \circ I_{2}
 =\mathrm{Tr} R^{\nabla}_{X, I_{3}(\, \cdot \, )} \circ I_{3}.
\end{align*}
Therefore (\ref{tw_eq}) holds. 
\end{proof}

Combining the lemmas above, we obtain an explicit expression 
for the lifted map of the twistor function 
$\mu = f_{Q}^{\nabla} \otimes e$, 
which will be used in Section \ref{sec_hp_n}.

\begin{proposition}\label{5_15}
Let $\nabla$ be a quaternionic connection, and let 
$X$ be a vector field preserving $Q$ such that $L_{X} \nabla =0$. 
Let $(e,\nu)$ is the associated pair. 
We assume that $Ric^{\nabla}$ is $Q$-hermite. 
If $Ric^{\nabla}_{x} \neq 0$ for any $x \in M$ and $\nabla \nu=0$, 
then the twistor function $\mu=f_{Q}^{\nabla} \otimes e$ satisfies 
\begin{align} 
\theta(\hat{X}) \circ s = \sum_{\alpha}^{3} \bar{\mu}_{\alpha} e_{\alpha}=
\begin{pmatrix}
   0 & -2 \bar{\mu}_{3} & 2 \bar{\mu}_{2} \\
   2 \bar{\mu}_{3} & 0 & -2 \bar{\mu}_{1}  \\
   -2 \bar{\mu}_{2} & 2\bar{\mu}_{1} & 0
\end{pmatrix}
\,\,\, 
(\theta_{\alpha}(\hat{X}) \circ s = \bar{\mu}_{\alpha}), 
\end{align}
where $s=(I_{1},I_{2},I_{2}):U \to S$ is a local section and 
$\bar{\mu}_{\alpha}:=( f_{Q}^{\nabla},I_{\alpha} )$. 
\end{proposition}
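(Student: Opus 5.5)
Write $\hat{X}=\tilde{X}+\sum_{\alpha} f_{\alpha} Z_{\alpha}$ for the lift of $X$ to the Swann bundle, as in Lemma \ref{5_14}. The plan is to identify the vertical part of $\hat{X}$, namely $\theta(\hat{X})$, pulled back by the section $s$, with the components $\bar{\mu}_{\alpha}$ of $f_{Q}^{\nabla}$. This is done by showing that both families of functions satisfy the same first-order system, which under the curvature hypothesis has a unique solution (Lemma \ref{5_13}).

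First, set $g_{\alpha}:=\theta_{\alpha}(\hat{X})\circ s$. Since $\theta=\frac12\sum\theta^{U}_{\alpha}e_{\alpha}$ in the normalization used for $\theta^{U}$, $g_{\alpha}$ is (up to the factor fixed by that normalization, which I will track so that it matches the claimed matrix) the function $f_{\alpha}\circ s$. Next, pull back equation \eqref{5_2} of Lemma \ref{5_14} along $s$. The horizontal part $\tilde{X}$ projects to $X$, and $s^{\ast}\Omega_{\alpha}$, $s^{\ast}\theta_{\alpha}$ are expressed through $\Omega^{U}_{\alpha}$ and $\theta^{U}_{\alpha}$ with the factor $\frac12$. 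Pulling back $df_{\alpha}$ along $s$ also produces terms from the vertical component of $s_{\ast}$; these are exactly absorbed into the $\theta$-terms, because $f_{\alpha}$ is equivariant for the $\mathrm{SO}(3)$-action. Keeping track of the factors of $2$, I expect the pulled-back system to be precisely
\[ dg_{\alpha}=-\tfrac12\iota_{X}\Omega^{U}_{\alpha}-g_{\gamma}\theta^{U}_{\beta}+g_{\beta}\theta^{U}_{\gamma}, \]
which is \eqref{5_1}.

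Second, by Lemma \ref{5_12}, the functions $\bar{\mu}_{\alpha}=(f_{Q}^{\nabla},I_{\alpha})$ satisfy the same equation \eqref{5_1}; here we use $\nabla\nu=0$ and $L_{X}\nabla=0$. Lemma \ref{m_fun} guarantees that $\mu=f_{Q}^{\nabla}\otimes e$ is indeed a twistor function under the hypotheses ($\nabla\nu=0$, $Ric^{\nabla}$ $Q$-hermitian). Since $Ric^{\nabla}$ is $Q$-hermitian and nowhere zero, Lemma \ref{5_13} applies to the difference $g_{\alpha}-\bar{\mu}_{\alpha}$. That difference solves the homogeneous system, so it vanishes, giving $g_{\alpha}=\bar{\mu}_{\alpha}$ on $U$.

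Finally, writing $\sum\bar{\mu}_{\alpha}e_{\alpha}$ with the explicit basis $e_{1},e_{2},e_{3}$ from the proof of Lemma \ref{qh_moment} gives the displayed skew matrix directly.

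The main obstacle is the bookkeeping of constants in the first step. The factor $\frac12$ in $\theta^{U}=\frac12\sum\theta^{U}_{\alpha}e_{\alpha}$, the bracket convention $[e_{\alpha},e_{\beta}]=2e_{\gamma}$, and the coefficient $2$ in \eqref{5_2} must combine so that the pulled-back equation for $g_{\alpha}$ matches \eqref{5_1} exactly, with no stray factor. Otherwise the uniqueness argument would only identify $g_{\alpha}$ with a multiple of $\bar{\mu}_{\alpha}$, or fail altogether. I would first verify this on the vertical directions, using the equivariance $R_{a}^{\ast}\theta=\mathrm{Ad}(a^{-1})\theta$ and $\hat{X}$ being $\mathrm{SO}(3)$-invariant, and then on horizontal directions via \eqref{5_2}.
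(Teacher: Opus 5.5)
Your proposal is correct and follows the paper's proof. The paper pulls back \eqref{5_2} by $s$ to see that $f_{\alpha}\circ s=\theta_{\alpha}(\hat{X})\circ s$ solves \eqref{5_1}. It then identifies this solution with $\bar{\mu}_{\alpha}$ (Lemma \ref{5_12}) via the uniqueness in Lemma \ref{5_13}, exactly as you plan.

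The constant bookkeeping you flag as the main obstacle is immediate. Since $\theta(Z_{\alpha})=e_{\alpha}$, you get $g_{\alpha}=f_{\alpha}\circ s$ exactly, and $s^{\ast}\theta_{\alpha}=\frac{1}{2}\theta^{U}_{\alpha}$, $s^{\ast}\Omega_{\alpha}=\frac{1}{2}\Omega^{U}_{\alpha}$ cancel the factor $2$ in \eqref{5_2}. Also, no extra ``vertical'' terms appear when pulling back $df_{\alpha}$, because \eqref{5_2} is already an identity of one-forms on all of $S$ and $\Omega$ is horizontal.
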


\begin{proof}
The pull-backing \eqref{5_2} by $s$ implies that $f_{\alpha} \circ s$ is the solution of \eqref{5_1}. 
Therefore, by Lemma \ref{5_13}, we have 
$f_{\alpha} \circ s=\bar{\mu}_{\alpha}$ for $\alpha=1,2,3$. 
\end{proof}

The relation between the zero sets of 
$\mu$ and $\hat{\mu}$
is described as follows.

\begin{lemma}\label{5_11}
Let $(M,Q)$ be a quaternionic manifold, and let $\hat{M}$ be the Swann bundle of $M$.  
If $\mu$ is a twistor function of $(M,Q)$, we denote the corresponding function on $\hat{M}$ 
by $\hat{\mu}$ as in Lemma \ref{qh_moment}. Then we have 
\begin{align*}
\mu^{-1}(0)=\hat{\pi}(\hat{\mu}^{-1}(0)), \,\,\, M \setminus \mu^{-1}(0)
=\hat{\pi}(\hat{M} \setminus \hat{\mu}^{-1}(0))
\end{align*}
where $\hat{\pi}:\hat{M} \to M$ is the bundle projection.
\end{lemma}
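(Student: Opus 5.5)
The proof consists of two inclusions, both obtained by evaluating $\hat{\mu}$ on a single fibre of $\hat{\pi}$. The second equality should follow from the first by taking complements, provided the first is proved fibrewise in the following sharper form: for each $x\in M$, either $\hat{\mu}$ vanishes identically on the fibre $\hat{\pi}^{-1}(x)$, or it vanishes nowhere on that fibre.

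Fix $x\in M$ and write $\mu_{x}=\bar{\mu}_{x}\otimes e_{x}$ with $e_{x}\neq 0$. Here $e$ is a local nowhere vanishing section of $L^{s_{0}}$, for instance from an associated pair on a neighbourhood of $x$. Then $\mu_{x}=0$ holds if and only if $\bar{\mu}_{x}=0$ in $Q_{x}$. A point of $\hat{\pi}^{-1}(x)$ is $(r,(I_{1},I_{2},I_{3}))$ with $r>0$ and $(I_{1},I_{2},I_{3})$ an admissible frame at $x$. By Lemma \ref{qh_moment},
\[
\hat{\mu}_{\alpha}(r,(I_{1},I_{2},I_{3}))=r^{\frac{2}{c}}(\bar{\mu}_{x},I_{\alpha}), \qquad \alpha=1,2,3 .
\]
The factor $r^{2/c}$ is positive, and $(I_{1},I_{2},I_{3})$ is an orthonormal basis of $Q_{x}$ for the metric $(\,\cdot\,,\,\cdot\,)$. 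Hence $\hat{\mu}$ vanishes at this point if and only if all three components of $\bar{\mu}_{x}$ in this basis vanish, that is, if and only if $\bar{\mu}_{x}=0$. Since the frame and $r$ were arbitrary, this gives the dichotomy: $\hat{\mu}$ is zero on all of $\hat{\pi}^{-1}(x)$ when $\mu_{x}=0$, and nonzero on all of it otherwise.

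Both equalities now follow because $\hat{\pi}$ is surjective, as every fibre is a nonempty principal $\mathbb{R}^{>0}\times\mathrm{SO}(3)$-torsor.
\begin{enumerate}
\item If $\mu_{x}=0$, every point of the nonempty fibre over $x$ lies in $\hat{\mu}^{-1}(0)$, so $x\in\hat{\pi}(\hat{\mu}^{-1}(0))$.
\item Conversely, if $x=\hat{\pi}(p)$ with $\hat{\mu}(p)=0$, the fibrewise statement gives $\mu_{x}=0$.
\item The same argument with "nonzero" in place of "zero" gives $M\setminus\mu^{-1}(0)=\hat{\pi}(\hat{M}\setminus\hat{\mu}^{-1}(0))$.
\end{enumerate}

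The only point needing care is well-definedness, since $\bar{\mu}$ depends on the choice of $e$ while $\hat{\mu}$ should not depend on it in an essential way. Rescaling $e$ by a nonzero function rescales $\bar{\mu}$, and hence $\hat{\mu}$, by a nonvanishing factor. This does not change the zero set, so the argument above is unaffected by the choice of $e$.
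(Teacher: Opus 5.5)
Your proof is correct: the observation that $\hat{\mu}^{-1}(0)=\hat{\pi}^{-1}(\mu^{-1}(0))$ follows from $r^{2/c}>0$ and the orthonormality of admissible frames, and together with the surjectivity of $\hat{\pi}$ it gives both equalities at once. The paper states this lemma without proof, treating it as immediate from the definition of $\hat{\mu}$ in Lemma \ref{qh_moment}, and your fibrewise argument is exactly that intended verification.
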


\subsection{A quaternionic complex manifold}

In this subsection, we recall the notion of a quaternionic complex manifold 
and its fundamental properties (\cite{J}), 
which play an important role in this paper.

\begin{definition}[\cite{J}]
A quaternionic manifold $(M,Q)$ with a twistor function 
$\mu \in \Gamma (Q \otimes L^{s_{0}})$ which vanishes nowhere on $M$
is called a 
{\cmssl quaternionic complex manifold}. 
\end{definition}

For a twistor function $\mu$ that vanishes nowhere on $M$, an almost complex structure is defined as 
\begin{align*}
\frac{\bar{\mu}}{ \| \bar{\mu}  \|}
\end{align*}
by expressing $\mu=\bar{\mu} \otimes e$ for $e \in \Gamma(L^{s_{0}})$. 
This is an almost complex structure determined up to sign. In fact, using
$\mu=\bar{\mu} \otimes e=\bar{\mu}^{\prime} \otimes e^{\prime}$, we have
\begin{align*}
\frac{\bar{\mu}}{ \| \bar{\mu}  \|}=\pm \frac{\bar{\mu}^{\prime}}{ \| \bar{\mu}^{\prime}  \|}. 
\end{align*}

\begin{lemma}\label{a1}
Let $(M,Q)$ be a quaternionic complex manifold with a twistor function 
$\mu \in \Gamma(Q \otimes L^{s_{0}})$ and $\nu$ be a volume form on $M$. 
If $\nabla$ is a quaternionic connection such that $\nabla \nu=0$ and 
$e \in \Gamma(L^{s_{0}})$ with $(e,\nu)$ an associated pair, 
then $\bar{\mu} \in \Gamma(Q)$ given by $\mu=\bar{\mu} \otimes e$ satisfies  
\begin{align*}
\nabla \left( \frac{\bar{\mu}}{ \| \bar{\mu} \|} \right)
=\frac{1}{\| \bar{\mu} \|} \left(
(\xi \circ I_{2}) \otimes I_{2} 
+(\xi \circ I_{3}) \otimes I_{3} 
\right), 
\end{align*}
where $(I_{1}:=\frac{\bar{\mu}}{ \| \bar{\mu} \|},I_{2},I_{2})$ is an admissible frame and 
$\xi$ is given by (\ref{tw_eq}). 
\end{lemma}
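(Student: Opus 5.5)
We work in the situation of the statement: $\nabla\nu=0$, $(e,\nu)$ is an associated pair, so $\nabla^{s_0}e=0$. As noted after (\ref{tw_eq}), the twistor equation then reduces to
\[
\nabla\bar{\mu}=\sum_{\alpha=1}^{3}(\xi\circ I_{\alpha})\otimes I_{\alpha},
\]
that is, (\ref{tw_eq1}), for a one-form $\xi$. This identity does not depend on the admissible frame, because $A^{\xi}$ is frame-independent. We may therefore evaluate it in the adapted local admissible frame $(I_1=\bar{\mu}/\|\bar{\mu}\|,I_2,I_3)$, which exists because $\bar{\mu}$ vanishes nowhere.

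First compute the derivative of the norm. Since $\nabla$ is a metric connection for $(\cdot,\cdot)$ on $Q$ and admissible frames are orthonormal, we get
\[
d\|\bar{\mu}\|^{2}(Y)=2(\nabla_{Y}\bar{\mu},\bar{\mu})=2\|\bar{\mu}\|\,(\nabla_Y\bar{\mu},I_1)=2\|\bar{\mu}\|\,\xi(I_1Y).
\]
Hence $d\|\bar{\mu}\|=\xi\circ I_1$.

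Next apply the quotient rule to $\bar{\mu}/\|\bar{\mu}\|$:
\[
\nabla\Big(\frac{\bar{\mu}}{\|\bar{\mu}\|}\Big)=\frac{1}{\|\bar{\mu}\|}\nabla\bar{\mu}-\frac{d\|\bar{\mu}\|}{\|\bar{\mu}\|^{2}}\otimes\bar{\mu}
=\frac{1}{\|\bar{\mu}\|}\Big(\sum_{\alpha}(\xi\circ I_\alpha)\otimes I_\alpha-(\xi\circ I_1)\otimes I_1\Big).
\]
The $I_1$-component cancels, and what remains is the claimed formula.

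There is no serious obstacle. The only point needing care is justifying (\ref{tw_eq1}) from (\ref{tw_eq}): one uses $\nabla^{s_0}e=0$, which follows from the earlier lemma on $L^{q/p}$ applied to $\nu^{\ast}$ with $\nabla\nu^{\ast}=0$. One should also check that the adapted frame is a legitimate local admissible frame, so that $A^{\xi}$ may be expanded in it. As a consistency check, the result has no $I_1$-component, as it must, since $I_1$ has constant norm.
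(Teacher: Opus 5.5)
Your proposal is correct and follows the paper's own proof essentially verbatim. Like the paper, you reduce to \eqref{tw_eq1}, use metric compatibility of $\nabla$ with $(\cdot,\cdot)$ to get $d\|\bar{\mu}\|=\xi\circ I_{1}$, and then apply the quotient rule so that the $I_{1}$-component cancels. Your added remarks on the frame-independence of $A^{\xi}$ and on $\nabla^{s_{0}}e=0$ are harmless extra justification.
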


\begin{proof}
Since $\mu$ is a twistor  function, then it holds that 
\begin{align*}
\nabla \bar{\mu}=\sum_{\alpha=1}^{3} (\xi \circ I_{\alpha}) \otimes I_{\alpha}. 
\end{align*}
Because $\nabla$ is a metric connection with respect to $(\, \cdot \, , \, \cdot \,)$, 
we have 
\begin{align*}
d \| \bar{\mu} \|^{2} =2 \| \bar{\mu} \| d \| \bar{\mu} \|, \,\,\,
d \langle \bar{\mu}, \bar{\mu} \rangle
=2\langle \nabla \bar{\mu}, \bar{\mu} \rangle
=2\langle \sum_{\alpha=1}^{3} (\xi \circ I_{\alpha}) \otimes I_{\alpha}, \| \bar{\mu} \| I_{1} \rangle
=2 (\xi \circ I_{1}) \| \bar{\mu} \|. 
\end{align*}
So it holds that $d \| \bar{\mu} \|=\xi \circ I_{1}$. 
Consequently, we have 
\begin{align*}
\nabla \left( \frac{\bar{\mu}}{ \| \bar{\mu} \|} \right)
&=d \left(\frac{1}{ \| \bar{\mu} \|} \right) \otimes \bar{\mu}
+\frac{1}{ \| \bar{\mu} \|} \nabla \bar{\mu} \\
&=-\frac{1}{ \| \bar{\mu} \|^{2} } (d \| \bar{\mu} \| ) \otimes (\| \bar{\mu} \| I_{1})
+\frac{1}{ \| \bar{\mu} \|} \sum_{\alpha=1}^{3} (\xi \circ I_{\alpha}) \otimes I_{\alpha} \\
&=-\frac{1}{ \| \bar{\mu} \| } (\xi \circ I_{1}) \otimes I_{1}
+\frac{1}{ \| \bar{\mu} \|} \sum_{\alpha=1}^{3} (\xi \circ I_{\alpha}) \otimes I_{\alpha}\\
&=\frac{1}{\| \bar{\mu} \|} \left(
(\xi \circ I_{2}) \otimes I_{2} 
+(\xi \circ I_{3}) \otimes I_{3} 
\right).
\end{align*}
\end{proof}

\begin{lemma}\label{vol}
Let $(M,Q)$ be a quaternionic complex manifold with a twistor function $\mu$. 
For $\mu=\bar{\mu} \otimes e =\bar{\mu}^{\prime} \otimes e^{\prime}$,
we have 
\begin{align*}
\| \bar{\mu} \|^{-2(n+1)} \nu = \| \bar{\mu}^{\prime} \|^{-2(n+1)} \nu^{\prime},
\end{align*}
where $(e,\nu)$, $(e^{\prime},\nu^{\prime})$ are associated pairs.
\end{lemma}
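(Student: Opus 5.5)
We reduce Lemma \ref{vol} to comparing the two decompositions of the same section $\mu$ of $Q\otimes L^{s_{0}}$. Since $L^{s_{0}}$ has rank one, we may write $e^{\prime}=\lambda e$ for a nowhere vanishing function $\lambda$, at least locally on the common domain. Then $\bar{\mu}\otimes e=\bar{\mu}^{\prime}\otimes \lambda e$ forces $\bar{\mu}=\lambda\bar{\mu}^{\prime}$, so
\begin{align*}
\| \bar{\mu} \| = |\lambda| \, \| \bar{\mu}^{\prime} \|.
\end{align*}

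Next we compare the volume forms. Because $(e,\nu)$ and $(e^{\prime},\nu^{\prime})$ are associated pairs, taking the $2(n+1)$-fold tensor power gives
\begin{align*}
\nu^{\prime\ast}=\underbrace{e^{\prime}\otimes\cdots\otimes e^{\prime}}_{2(n+1)}=\lambda^{2(n+1)}\,\nu^{\ast}.
\end{align*}
Here $\nu^{\ast}\in\Gamma(L^{-1})$ is the dual of $\nu$, characterized by $\langle \nu^{\ast},\nu\rangle=1$. From $\langle \nu^{\prime\ast},\nu^{\prime}\rangle=1$ we get $\nu^{\prime}=\lambda^{-2(n+1)}\nu$. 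Since $2(n+1)$ is even, $\lambda^{2(n+1)}=|\lambda|^{2(n+1)}$, and there is no sign ambiguity.

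Combining the two relations yields
\begin{align*}
\| \bar{\mu}^{\prime} \|^{-2(n+1)}\nu^{\prime}
= |\lambda|^{2(n+1)}\| \bar{\mu} \|^{-2(n+1)}\,|\lambda|^{-2(n+1)}\nu
=\| \bar{\mu} \|^{-2(n+1)}\nu ,
\end{align*}
which is the claim. Both sides are globally defined tensors, so the local identity holds everywhere.

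The only step needing care is the identification of $L^{s_{0}}$-powers with $L^{-1}$ and the duality pairing. This is where the fractional-weight construction earlier in Section \ref{sec_twist_fun} enters: the isomorphism $(L^{s_{0}})^{\otimes 2(n+1)}\cong L^{-1}$ must be compatible with multiplying a section by a function. That compatibility is clear from the transition-function description, since tensor products multiply transition functions and scalar factors multiply accordingly.
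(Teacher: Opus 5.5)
Your proof is correct and is essentially the paper's argument: the paper writes $e = a e^{\prime}$ (your $a=\lambda^{-1}$), deduces $\bar{\mu}^{\prime}=a\bar{\mu}$ and $\nu^{\ast}=a^{2(n+1)}\nu^{\prime\ast}$, and then cancels $|a|^{2(n+1)}$ against $a^{2(n+1)}$ using the even exponent, exactly as you do. Your hedge that $\lambda$ exists only locally is unnecessary but harmless: $\lambda$ is in fact a global nowhere-vanishing function, since $e$ and $e^{\prime}$ are nowhere vanishing (their $2(n+1)$-fold tensor powers are the duals of volume forms).
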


\begin{proof}
Since $(e,\nu)$, $(e^{\prime},\nu^{\prime})$ are associated pairs, we have
\[ e \otimes \cdots \otimes e=\nu^{\ast}\,\,\, \mbox{and}\,\,\, e^{\prime} \otimes \cdots \otimes e^{\prime} =\nu^{\prime\ast}. \]
If $e=a e^{\prime}$ for a function $a$, we have 
$\nu^{\ast} = e \otimes \cdots \otimes e
=a^{2(n+1)} e^{\prime} \otimes \cdots \otimes e^{\prime}=a^{2(n+1)} \nu^{\prime \ast}$. 
On the other hand, since $(\mu=)\bar{\mu} \otimes e 
=\bar{\mu}^{\prime} \otimes e^{\prime}=(\bar{\mu}^{\prime}/a) \otimes e$, 
we have $\bar{\mu}^{\prime}=a \bar{\mu}$. 
Therefore, 
it holds that 
\begin{align*}
\| \bar{\mu} \|^{-2(n+1)} \nu = | a |^{2(n+1)} \| \bar{\mu}^{\prime} \|^{-2(n+1)} 
a^{-2(n+1)} \nu^{\prime}=\| \bar{\mu}^{\prime} \|^{-2(n+1)} \nu^{\prime}. 
\end{align*}
\end{proof}

For a quaternionic complex manifold $(M,Q)$ with a twistor function $\mu$, we can define 
a specific volume form as in Lemma \ref{vol}, which is denoted by $\nu^{\mu}$. 

\begin{proposition}[\cite{J}]\label{integrable}
Let $(M,Q)$ be a quaternionic complex manifold with a twistor function 
$\mu \in \Gamma(Q \otimes L^{s_{0}})$. 
Denote the almost complex structure defined from $\mu$ by $I$, which is unique up to sign. 
Then there exists a unique quaternionic connection $\nabla$ 
such that $\nabla \nu^{\mu}=0$. This quaternionic connection $\nabla$ 
satisfies $\nabla I=0$, in particular the Ricci tensor of $\nabla$ is symmetric and $I$ is integrable. 
\end{proposition}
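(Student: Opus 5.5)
Fix the volume form $\nu:=\nu^{\mu}$ and let $\nabla$ be the unique quaternionic connection with $\nabla\nu=0$; this is the connection mentioned just before Lemma \ref{q_conn_eq2}. Choose $e\in\Gamma(L^{s_0})$ so that $(e,\pm\nu)$ is an associated pair, and write $\mu=\bar\mu\otimes e$. By Lemma \ref{vol} we have $\|\bar\mu\|^{-2(n+1)}\nu=\nu^{\mu}=\nu$, so $\|\bar\mu\|$ is constant (equal to $1$). The first step is to exploit this. Since $\nabla^{s_0}e=0$, the twistor equation becomes (\ref{tw_eq1}), $\nabla\bar\mu=\sum_\alpha(\xi\circ I_\alpha)\otimes I_\alpha$. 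The computation in the proof of Lemma \ref{a1} gives $d\|\bar\mu\|=\xi\circ I_1$. Hence $\xi\circ I_1=0$, and so $\xi=0$. Then Lemma \ref{a1} yields $\nabla I=0$ for $I=\bar\mu/\|\bar\mu\|$.

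For uniqueness, suppose $\tilde\nabla=\nabla+S^{\eta}$ is another quaternionic connection with $\tilde\nabla\nu=0$. By Lemma \ref{q_conn_eq2} with $s=1$, the induced connections on $L$ differ by $-4(n+1)\eta$. So $\tilde\nabla\nu=0=\nabla\nu$ forces $\eta=0$.

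For integrability, the Nijenhuis tensor of $I$ can be written using any torsion-free connection. With $\nabla I=0$, every term contains $\nabla I$ or the torsion, so $N_I=0$.

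It remains to show that the Ricci tensor is symmetric, and this is the main obstacle. Since $\nabla\nu=0$, the connection on $L$ is flat, and therefore $\mathrm{Tr}R^{\nabla}_{X,Y}=0$. By the standard identity for torsion-free connections (first Bianchi identity traced), $2(Ric^{\nabla})^{a}(X,Y)=-\mathrm{Tr}R_{X,Y}$ up to sign, so $(Ric^{\nabla})^a=0$. If one wants to avoid this identity, the alternative is to use the decomposition $R^{\nabla}=W+R^{B^\nabla}$ and compute $\mathrm{Tr}R^{B}_{X,Y}$ via $\mathrm{Tr}S^{\xi}_X=4(n+1)\xi(X)$. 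This shows that the trace is a nonzero multiple of $(Ric^\nabla)^a(X,Y)$, and so again $(Ric^\nabla)^a=0$. I would verify the exact constant in this last computation carefully, but only its nonvanishing is needed. I also note that $\nabla I=0$ implies $R_{X,Y}$ commutes with $I$, which is consistent with the holonomy lying in $\mathrm{SL}(n,\mathbb H)\cdot\mathrm U(1)$.
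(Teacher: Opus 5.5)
Your proof is correct and follows the paper's argument. You normalize $\bar\mu$ against $\nu^{\mu}$; the paper does this by passing to the pair $(\|\bar\mu\|e,\nu^{\mu})$, while you use Lemma \ref{vol}, by which the alternative $(e,-\nu)$ cannot occur. Either way $\|\bar\mu\|$ is constant, so $\xi\circ I_{1}=d\|\bar\mu\|=0$ forces $\xi=0$, and Lemma \ref{a1} gives $\nabla I=0$. Your added checks fill in points the paper only cites or leaves implicit: uniqueness via Lemma \ref{q_conn_eq2}, integrability via the Nijenhuis tensor, and Ricci symmetry via the traced Bianchi identity, which rightly relies on $\nabla\nu^{\mu}=0$ rather than on $\nabla I=0$.
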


\begin{proof}
Theorem 2.4 in \cite{AM1} tells us that 
there exists a unique quaternionic connection $\nabla$ such that $\nabla \nu^{\mu}=0$. 
We can write 
$\mu=\bar{\mu} \otimes e=\frac{\bar{\mu}}{ \| \bar{\mu} \|} \otimes(\| \bar{\mu} \| e )$ and 
see that $(\| \bar{\mu} \| e, \nu^{\mu})$ is an associated pair. 
With respect to $\nabla$, we have
\[ 0=d ( \left\| \frac{\bar{\mu}}{\| \bar{\mu} \| } ) \right\|
)=\xi \circ I_{1}, \] 
which implies $\xi=0$. Here $\xi$ is a corresponding one-form of $\bar{\mu}/ \| \bar{\mu} \|$. 
Therefore, $\nabla I=0$ by Lemma \ref{a1}. 
\end{proof}

\begin{definition}
Let $(M,Q)$ be a quaternionic complex manifold with a twistor function 
$\mu \in \Gamma(Q \otimes L^{s_{0}})$. We denote the unique quaternionic 
connection in Proposition \ref{integrable} by $\nabla^{\mu}$ and call it 
{\cmssl $\mu$-connection}. 
\end{definition}

Therefore, the holonomy group of the $\mu$-connection $\nabla^{\mu}$ is contained in 
$\mathrm{SL}(n,\mathbb{H}) \mathrm{U}(1)$.

\begin{lemma}\label{diff_conn}
Let $(M,Q)$ be a quaternionic complex manifold with a twistor function 
$\mu=\bar{\mu} \otimes e \in \Gamma(Q \otimes L^{s_{0}})$, and let  
$(e,\nu)$ be an associated pair. 
For a (unique) quaternionic connection $\nabla$ such that $\nabla \nu=0$, the difference between 
$\nabla^{\mu}$ and $\nabla$ is given by  
\begin{align*}
\nabla^{\mu}-\nabla=S^{\alpha}, \,\, 
\alpha=- \frac{1}{2} d \log \| \bar{\mu} \|. 
\end{align*}  
\end{lemma}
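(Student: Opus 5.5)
Since $\nabla^{\mu}$ and $\nabla$ are both quaternionic connections, Lemma \ref{q_conn_eq} gives a one-form $\alpha$ with $\nabla^{\mu}=\nabla+S^{\alpha}$, so the only task is to identify $\alpha$. The plan is to compare how the two connections act on volume forms and then use the defining property $\nabla^{\mu}\nu^{\mu}=0$.

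First I make $\nu^{\mu}$ explicit in terms of the pair $(e,\nu)$. By Lemma \ref{vol} and the construction following it, $\nu^{\mu}=\|\bar{\mu}\|^{-2(n+1)}\nu$; indeed $(\|\bar{\mu}\|e,\nu^{\mu})$ is an associated pair, as in the proof of Proposition \ref{integrable}. Next I compute how $S^{\alpha}$ acts on volume forms. On $L=\Lambda^{4n}T^{*}M$ the induced connection changes by $-\mathrm{Tr}S^{\alpha}_{X}=-4(n+1)\alpha(X)$, which is the $s=1$ case of Lemma \ref{q_conn_eq2}. Hence
\[
\nabla^{\mu}_{X}\nu^{\mu}=\nabla_{X}\nu^{\mu}-4(n+1)\alpha(X)\nu^{\mu}.
\]
Using $\nabla\nu=0$, I get
\[
\nabla_{X}\nu^{\mu}=X\bigl(\|\bar{\mu}\|^{-2(n+1)}\bigr)\nu=-2(n+1)\,d\log\|\bar{\mu}\|(X)\,\nu^{\mu}.
\]

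Setting $\nabla^{\mu}\nu^{\mu}=0$ then gives
\[
-2(n+1)\,d\log\|\bar{\mu}\|-4(n+1)\alpha=0,
\]
so $\alpha=-\tfrac12\, d\log\|\bar{\mu}\|$. By the uniqueness part of Proposition \ref{integrable}, the connection $\nabla+S^{\alpha}$ with this $\alpha$ is the $\mu$-connection, which completes the argument.

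The main obstacle is keeping the sign and normalisation conventions consistent. Specifically, I need to confirm that the connection induced on $L$ changes by $-\mathrm{Tr}S^{\alpha}$ rather than $+\mathrm{Tr}S^{\alpha}$, and that $\mathrm{Tr}S^{\alpha}_{X}=4(n+1)\alpha(X)$; both facts are used in the proof of Lemma \ref{q_conn_eq2}. As a cross-check, I can redo the computation at weight $s_{0}$ on the section $\|\bar{\mu}\|e$ of $L^{s_{0}}$, which must be $\nabla^{\mu}$-parallel. This gives $s_{0}\cdot(-4(n+1))\alpha+d\log\|\bar{\mu}\|=0$, i.e. $2\alpha+d\log\|\bar{\mu}\|=0$, which agrees with the formula above.
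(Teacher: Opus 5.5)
Your proposal is correct and follows essentially the same route as the paper: write $\nabla^{\mu}=\nabla+S^{\alpha}$, use $\nu^{\mu}=\|\bar{\mu}\|^{-2(n+1)}\nu$, $\nabla\nu=0$ and $\mathrm{Tr}\,S^{\alpha}_{X}=4(n+1)\alpha(X)$, and solve $\nabla^{\mu}\nu^{\mu}=0$ for $\alpha$. Your sign conventions (the change on $\Lambda^{4n}T^{\ast}M$ being $-\mathrm{Tr}\,S^{\alpha}$) are right, and the weight-$s_{0}$ cross-check on $\|\bar{\mu}\|e$ is a consistent, though optional, confirmation.
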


\begin{proof}
The connection $\nabla^{\mu}$ is determined by the equation 
$\nabla^{\mu} \nu^{\mu} =0$, where $\nu^{\mu} =\| \bar{\mu} \|^{-2(n+1)} \nu$. 
Using $\nabla \nu^{\mu}=d(\| \bar{\mu} \|^{-2(n+1)}) \otimes \nu$ 
and $\mathrm{Tr}S^{\alpha}_{Y}=4(n+1) \alpha(Y)$ for $Y \in TM$, 
we calculate 
\begin{align*}
\nabla^{\mu} \nu^{\mu}
&=d(\| \bar{\mu} \|^{-2(n+1)}) \otimes \nu-(\mathrm{Tr}S^{\alpha}) \otimes \| \bar{\mu} \|^{-2(n+1)} \nu \\
&=d(\| \bar{\mu} \|^{-2(n+1)}) \otimes \nu-4(n+1) \| \bar{\mu} \|^{-2(n+1)} \alpha \otimes \nu. 
\end{align*}
Then the equation $\nabla^{\mu} \nu^{\mu} =0$ holds if and only if 
\[ d(\| \bar{\mu} \|^{-2(n+1)})=4(n+1) \| \bar{\mu} \|^{-2(n+1)} \alpha. \]
This shows the conclusion. 
\end{proof}

It is easy to see that 
\begin{align*}
L_{X} I 
=\nabla_{X} I-[f_{Q}^{\nabla},I]. 
\end{align*}
for any quaternionic connection $\nabla$.

\begin{lemma}\label{Lie_der_I} 
Let $(M,Q)$ be a quaternionic complex manifold with a twistor function 
$\mu \in \Gamma(Q \otimes L^{s_{0}})$ and a quaternionic $S^{1}$-action generated by $X \in \Gamma(TM)$. Then the complex structure $I$ defined by $\mu$ 
is $S^{1}$-invariant 
if and only if $I$ and $f_{Q}^{\nabla^{\mu}}$ are linear dependent.  
\end{lemma}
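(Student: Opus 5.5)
We will use the identity stated just before the lemma, $L_{X}I=\nabla_{X}I-[f_{Q}^{\nabla},I]$, applied to the $\mu$-connection $\nabla=\nabla^{\mu}$. By Proposition \ref{integrable}, $\nabla^{\mu}I=0$, so in particular $\nabla^{\mu}_{X}I=0$. Hence
\[ L_{X}I=-[f_{Q}^{\nabla^{\mu}},I], \]
and $S^{1}$-invariance of $I$, i.e. $L_{X}I=0$ (the action is generated by $X$ and $S^{1}$ is connected), is equivalent to $[f_{Q}^{\nabla^{\mu}},I]=0$ pointwise on $M$.

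It remains to show that for $A\in Q$ and $I\in Q$ with $I^{2}=-\mathrm{Id}$, $[A,I]=0$ if and only if $A$ and $I$ are linearly dependent. We work in an admissible frame $(I_{1}=I,I_{2},I_{3})$ at each point and write $A=a_{1}I_{1}+a_{2}I_{2}+a_{3}I_{3}$. The quaternionic relations give $[I_{1},I_{1}]=0$, $[I_{2},I_{1}]=-2I_{3}$ and $[I_{3},I_{1}]=2I_{2}$. Therefore
\[ [A,I]=2a_{3}I_{2}-2a_{2}I_{3}, \]
which vanishes if and only if $a_{2}=a_{3}=0$, i.e. $A=a_{1}I$. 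Applying this with $A=f_{Q}^{\nabla^{\mu}}$ gives the claim; linear dependence here is meant pointwise, with $f_{Q}^{\nabla^{\mu}}$ allowed to vanish.

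We should first check that the formula $L_{X}I=\nabla_{X}I-[f_{Q}^{\nabla},I]$ is valid for an arbitrary quaternionic connection, and in particular for $\nabla^{\mu}$. It follows from torsion-freeness: $(L_{X}I)Y=[X,IY]-I[X,Y]=\nabla_{X}(IY)-\nabla_{IY}X-I\nabla_{X}Y+I\nabla_{Y}X=(\nabla_{X}I)Y-[\nabla X,I]Y$. The $Z(Q)$-part of $\nabla X$ commutes with $I$, so only $f_{Q}^{\nabla}$ contributes to the bracket. This requires only that $X$ preserve $Q$, not that it preserve $\nabla^{\mu}$, so it is not necessary to know that the action preserves $\mu$ or $\nabla^{\mu}$.

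The only subtle point is the sign ambiguity of $I$, which does not affect either condition. Beyond that, there is no real obstacle; the proof is essentially this two-line computation.
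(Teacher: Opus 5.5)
Your proof is correct and follows the paper's intended route: the paper records the identity $L_{X}I=\nabla_{X}I-[f_{Q}^{\nabla},I]$ just before the lemma and leaves the rest implicit. That rest is what you do: take $\nabla=\nabla^{\mu}$, use $\nabla^{\mu}I=0$, and note that $[f_{Q}^{\nabla^{\mu}},I]=0$ holds exactly when $f_{Q}^{\nabla^{\mu}}\in\mathbb{R}I$ pointwise. Your derivation of the identity from torsion-freeness and your frame computation $[A,I_{1}]=2a_{3}I_{2}-2a_{2}I_{3}$ supply the details the paper omits.
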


\begin{remark}
{\rm
Let $(M,Q,g)$ be a quaternionic K\"ahler manifold $(M,Q,g)$ with non-zero scalar curvature on which $S^{1}$ acts isometrically and $\mu$ be a quaternionic K\"ahler moment map. 
The moment map $\mu$ is a unique solution of the equation 
\[ \nabla^{g} \mu=\sum_{i=1}^{3} g(I_{i}X, \, \cdot \,) \otimes I_{i}. \] 
This gives 
\[ \mu= a f_{Q}^{\nabla^{g}}, \]
where $\nabla^{g}$ is the Levi-Civita connection and 
$a$ is a nonzero multiple of the scalar curvature. 
It holds that $\| \mu \|$ is constant along 
$X$ and then $\nabla^{g} I=0$. 
We have $L_{X}I=0$ for the complex structure $I$ induced by $\mu$. 
}
\end{remark}

\begin{corollary}\label{ex_m_map} 
Let $(M,Q)$ be a quaternionic manifold with a twistor function 
$\mu \in \Gamma(Q \otimes L^{s_{0}})$, 
and with a quaternionic $S^{1}$-action generated by $X \in \Gamma(TM)$. 
If the complex structure $I$ defined by $\mu$ is $S^{1}$-invariant, then we have 
\[ \mu=k(f_{Q}^{\nabla^{\mu}}) \otimes e \]
for a function $k$, where $e$ is a section of $L^{s_{0}}$. 
In particular, if $\mu$ is nowhere vanishing and  $f_{Q} \neq 0$ on a connected component of 
the fixed-point set $F$, then 
the induced complex structure on $F$ from $f_{Q}$ coincides with one twistor function $\mu$ up to sign.  
\end{corollary}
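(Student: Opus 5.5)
Since $I$ is $S^{1}$-invariant, Lemma \ref{Lie_der_I} tells us that $I$ and $f_{Q}^{\nabla^{\mu}}$ are linearly dependent at each point. The plan is to turn this pointwise dependence into the stated global formula for $\mu$, and then to specialize it to $F$.

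First we write $\mu=\bar{\mu}\otimes e$ for a local section $e$ of $L^{s_{0}}$. By construction $I=\pm\bar{\mu}/\|\bar{\mu}\|$, so $\bar{\mu}=\pm\|\bar{\mu}\|I$.

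On the open set where $f_{Q}^{\nabla^{\mu}}\neq 0$, linear dependence forces $I=\pm f_{Q}^{\nabla^{\mu}}/\|f_{Q}^{\nabla^{\mu}}\|$. Hence
\[
\bar{\mu}=\pm\frac{\|\bar{\mu}\|}{\|f_{Q}^{\nabla^{\mu}}\|}f_{Q}^{\nabla^{\mu}},
\]
which gives $\mu=k\,f_{Q}^{\nabla^{\mu}}\otimes e$ with $k=\pm\|\bar{\mu}\|/\|f_{Q}^{\nabla^{\mu}}\|$.

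The main obstacle is the zero set of $f_{Q}^{\nabla^{\mu}}$. There the function $k$ is not canonically defined, since the right-hand side vanishes while $\bar{\mu}$ may not. I would read the statement on the locus where $f_{Q}^{\nabla^{\mu}}\neq 0$, or allow $k$ to be an arbitrary (non-smooth) function there. I would also note that $\nabla^{\mu}$ is only defined where $\mu\neq 0$, which is exactly the situation of the second assertion. The sign ambiguity $\pm$ is absorbed into $k$.

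For the ``in particular'' part, assume $\mu$ is nowhere vanishing, so that $\nabla^{\mu}$ exists by Proposition \ref{integrable}. On a connected component $F$ of the fixed-point set, $f_{Q}$ is independent of the quaternionic connection, so $f_{Q}=f_{Q}^{\nabla^{\mu}}$ there. By hypothesis $f_{Q}\neq 0$ on $F$, and by Theorem \ref{thm_fixpts} its norm is constant. Lemma \ref{cpx_submfd} shows that $F$ is an almost complex submanifold with induced structure $I_{1}=f_{Q}/\|f_{Q}\|$.

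Linear dependence gives $I=\pm f_{Q}/\|f_{Q}\|$ on $F$. Thus the complex structure on $F$ induced by $f_{Q}$ agrees up to sign with the one defined by the twistor function $\mu$. By continuity and the connectedness of $F$, the sign is constant along $F$.
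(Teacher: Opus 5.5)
Your argument is correct and is essentially the paper's own route: the paper gives no separate proof of this corollary and intends it to follow immediately from Lemma~\ref{Lie_der_I} (pointwise linear dependence of $I$ and $f_{Q}^{\nabla^{\mu}}$), combined with the fact that $f_{Q}$ does not depend on the quaternionic connection along $F$. Your caveats are legitimate sharpenings of the statement as written: $k$ is only well defined where $f_{Q}^{\nabla^{\mu}}\neq 0$, and $\nabla^{\mu}$ exists only off $\mu^{-1}(0)$.
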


\begin{lemma}\label{inv_111}
Let $(M,Q)$ be a quaternionic manifold 
with a quaternionic $S^{1}$-action 
generated by $X \in \Gamma(TM)$. 
Let $I$ be a complex structure compatible with $Q$ such that $L_{X} I=0$, and let
$\nabla$ be a quaternionic connection such that $\nabla I=0$. Then we have $L_{X}\nabla=0$. 
\end{lemma}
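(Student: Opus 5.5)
The strategy is to show that the flow $\varphi_{t}$ of $X$ maps $\nabla$ to itself. Set $\nabla^{t}:=\varphi_{t}^{\ast}\nabla$ and compare it with $\nabla$. Since the action preserves $Q$, each $\nabla^{t}$ is again a torsion-free connection preserving $Q$, so it is a quaternionic connection. Moreover $L_{X}I=0$ gives $\varphi_{t}^{\ast}I=I$, and therefore $\nabla^{t}I=\varphi_{t}^{\ast}(\nabla I)=0$. We thus have two quaternionic connections, $\nabla$ and $\nabla^{t}$, both making $I$ parallel. The claim reduces to a uniqueness statement: a quaternionic connection that makes a fixed compatible complex structure $I$ parallel is unique. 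Once that holds, $\nabla^{t}=\nabla$ for all $t$, and differentiating at $t=0$ gives $L_{X}\nabla=0$.

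To prove the uniqueness, I would use Lemma~\ref{q_conn_eq}. Write $\nabla^{t}=\nabla+S^{\xi}$ for some one-form $\xi$ (depending on $t$). Then
\[
0=\nabla^{t}_{Y}I-\nabla_{Y}I=[S^{\xi}_{Y},I].
\]
Take a local admissible frame $(I_{1}=I,I_{2},I_{3})$. The computation already done before the twistor-operator proposition gives
\[
[S^{\xi}_{Y},I_{1}]=2\xi(I_{2}Y)I_{3}-2\xi(I_{3}Y)I_{2}.
\]
Since $I_{2}$ and $I_{3}$ are linearly independent, this yields $\xi(I_{2}Y)=\xi(I_{3}Y)=0$ for every $Y$. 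Because $I_{2}$ is invertible, $\xi=0$, and hence $\nabla^{t}=\nabla$.

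For the derivative step, one could avoid the flow altogether. Use $L_{X}I=0$ and the fact that $L_{X}$ commutes with the construction of covariant derivatives in the form
\[
(L_{X}\nabla)(I)=L_{X}(\nabla I)-\nabla(L_{X}I)\quad\text{(as a derivation identity)}.
\]
This gives $[(L_{X}\nabla)_{Y},I]=0$. Next, $L_{X}\nabla$ is the derivative at $t=0$ of $S^{\xi_{t}}$, so it has the form $S^{\dot\xi}$. The same algebra then shows $\dot\xi=0$.

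The only point that needs care is justifying that $\varphi_{t}^{\ast}\nabla$ is quaternionic. This is immediate: $\varphi_{t}$ preserves $Q$, and pullback preserves torsion-freeness. The flow argument avoids the linearization subtlety entirely, so I would present that version. No global issues arise, since the argument is pointwise on $M$.
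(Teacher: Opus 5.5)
Your proof is correct and is essentially the paper's argument: you compare $\nabla$ with its pullback under the flow, write the difference as $S^{\xi}$, and use $[S^{\xi}_{Y},I]=2\xi(I_{2}Y)I_{3}-2\xi(I_{3}Y)I_{2}$ to force $\xi=0$. The only difference is that you kill $\xi_{t}$ at each finite time $t$, which amounts to applying the uniqueness statement of Lemma~\ref{3_33} to $\varphi_{t}^{\ast}\nabla$. The paper instead first differentiates to get $L_{X}\nabla=S^{\xi_{X}}$ and then kills $\xi_{X}$, so your version avoids the small step of identifying $\frac{d}{dt}S^{\xi_{t}}\big|_{t=0}$ with $S^{\dot\xi}$.
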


\begin{proof}
Consider $\{ \varphi_{t} \}$ be the flow generated by $X$ and connection 
$\nabla^{t}$ defined by 
\[ \nabla^{t}_{Y}Z:=\varphi_{t \ast} (\nabla_{\varphi_{t \ast}^{-1}Y} \varphi_{t \ast}^{-1}Z),  \]
which is a quaternionic connection. Therefore, we can write
\[ \nabla^{t}-\nabla=S^{\xi_{t}} \]
and obtain 
\begin{align}\label{aa1}
L_{X} \nabla =\left. \frac{d}{dt} \nabla^{t} \right|_{t=0}=\left. \frac{d}{dt} S^{\xi_{t}} \right|_{t=0}=S^{\xi_{X}},
\end{align}
where $\xi_{X}=\frac{d}{dt} \xi_{t} |_{t=0}$. 
Since $L_{X}I=0$ and $\nabla I=0$, we have 
\begin{align}\label{aa2}
[L_{X}\nabla,I]=0. 
\end{align}
By \eqref{aa1} and \eqref{aa2}, we have $[S^{\xi_{X}},I]=0$. On the other hand, we know 
$[S^{\xi_{X}},I]=2(\xi_{X} \circ I_{2}) \otimes I_{3}-2(\xi_{X} \circ I_{3}) \otimes I_{2}$ 
with respect to an admissible frame $(I_{1}=I,I_{2},I_{3})$. This shows that $\xi_{X}=0$.  
\end{proof}

By Lemma \ref{inv_111}, we have the following corollary. 

\begin{corollary}\label{inv_mu_conn} 
Let $(M,Q)$ be a quaternionic complex manifold with a twistor function 
$\mu \in \Gamma(Q \otimes L^{s_{0}})$ and a quaternionic $S^{1}$-action generated by $X \in \Gamma(TM)$. 
If the complex structure $I$ defined by $\mu$ is $S^{1}$-invariant, then 
the $\mu$-connection $\nabla^{\mu}$ is 
$S^{1}$-invariant. 
\end{corollary}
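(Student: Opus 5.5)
The statement to prove is Corollary \ref{inv_mu_conn}. The plan is to apply Lemma \ref{inv_111} with $I$ the complex structure defined by $\mu$ and $\nabla=\nabla^{\mu}$. Lemma \ref{inv_111} then gives $L_{X}\nabla^{\mu}=0$. This is the infinitesimal form of $S^{1}$-invariance, so $\varphi_{t}^{\ast}\nabla^{\mu}=\nabla^{\mu}$ along the flow of $X$, hence for every element of the connected group $S^{1}$.

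The hypotheses of Lemma \ref{inv_111} are checked as follows.
\begin{enumerate}
\item $(M,Q)$ carries a quaternionic $S^{1}$-action generated by $X$; this is assumed.
\item $I$ is a complex structure compatible with $Q$. By Proposition \ref{integrable}, $I=\bar{\mu}/\|\bar{\mu}\|$ is a section of $Q$ with $I^{2}=-\mathrm{Id}$, and it is integrable.
\item $L_{X}I=0$. This is exactly the assumed $S^{1}$-invariance of $I$, differentiated along the flow.
\item $\nabla^{\mu}$ is a quaternionic connection with $\nabla^{\mu}I=0$. This is the content of Proposition \ref{integrable}.
\end{enumerate}

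One subtlety is that $I$ is determined by $\mu$ only up to sign. This is harmless: both $L_{X}I=0$ and $\nabla^{\mu}I=0$ are insensitive to replacing $I$ by $-I$, and if $M$ is not connected one may fix a sign on each component.

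I expect no real obstacle. The only point to state carefully is the passage from $L_{X}\nabla^{\mu}=0$ to invariance under the whole $S^{1}$-action. Write $\varphi_{t}$ for the flow of $X$, which is the $S^{1}$-action. Then $\frac{d}{dt}\varphi_{t}^{\ast}\nabla^{\mu}=\varphi_{t}^{\ast}L_{X}\nabla^{\mu}=0$, so $\varphi_{t}^{\ast}\nabla^{\mu}=\nabla^{\mu}$ for all $t$.
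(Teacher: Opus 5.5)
Your proposal is correct and is exactly the paper's argument: the corollary follows by applying Lemma \ref{inv_111} to $\nabla=\nabla^{\mu}$, with $\nabla^{\mu}I=0$ supplied by Proposition \ref{integrable}. Your remarks on the sign ambiguity of $I$ and on integrating $L_{X}\nabla^{\mu}=0$ along the flow of $X$ are harmless extra detail.
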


\begin{theorem}\label{thm_1a}
Let $(M,Q)$ be a quaternionic manifold with a twistor function 
$\mu=\bar{\mu} \otimes e \in \Gamma(Q \otimes L^{s_{0}})$. 
Suppose that $S^{1}$ acts on $M$ preserving $Q$ and a complex structure $I$ given by $\mu$ is invariant by  the $S^{1}$-action.
Let $F$ be a connected component of the fixed-point set of the $S^{1}$-action with $f_{Q} \neq 0$ 
and $F \cap \mu^{-1}(0) = \emptyset$. 
The induced connections on $F$ from $\nabla^{\mu}$ and  
an $S^{1}$-invariant quaternionic connection $\nabla$ such that $\nabla \nu=0$ coincide on $F$, 
where $(e,\nu)$ is an associated pair.   
\end{theorem}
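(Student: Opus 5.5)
By Lemma \ref{diff_conn}, on the open set $M\setminus\mu^{-1}(0)$, which contains $F$, we have
\[
\nabla^{\mu}=\nabla+S^{\alpha},\qquad \alpha=-\tfrac12\, d\log\|\bar{\mu}\|.
\]
The induced connection on $F$ is obtained by taking the $TF$-component of $\nabla_YZ$ for $Y,Z\in\Gamma(TF)$. It therefore suffices to show that the tangential part of $S^{\alpha}_YZ$ vanishes for $Y,Z\in TF$. We must also check that $F$ is totally geodesic for both connections, so that both induced connections are honestly defined and comparable. For $\nabla$ this is Lemma \ref{fix_point}, since $\nabla$ is $S^1$-invariant. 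For $\nabla^{\mu}$ it follows from Corollary \ref{inv_mu_conn}: $I$ is $S^1$-invariant, so $L_X\nabla^{\mu}=0$, and Lemma \ref{fix_point} applies again.

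The key step is to show that $\alpha|_{TF}$ and $(\alpha\circ I_1)|_{TF}$ vanish, or at least that the tangential part of $S^{\alpha}|_{TF\times TF}$ vanishes. Take the admissible frame with $I_1=I=\bar\mu/\|\bar\mu\|$. By Corollary \ref{ex_m_map}, on $F$ this $I_1$ agrees up to sign with $f_Q/\|f_Q\|$. By Lemma \ref{cpx_submfd}, $TF$ is $I_1$-invariant and $TF\cap I_2TF=TF\cap I_3TF=0$. Moreover, by Remark \ref{rem_para}, $V=TF\oplus I_2TF$ is a $Q$-invariant subbundle, so $I_3TF\subset V$ and $I_3Y$ has a zero $TF$-component for $Y\in TF$. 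Indeed, $I_3Y=I_1I_2Y$ lies in $I_1(I_2TF)=I_2(I_1TF)=I_2TF$.

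Now compute the tangential part of
\[
S^{\alpha}_YZ=\alpha(Y)Z+\alpha(Z)Y-\alpha(I_1Y)I_1Z-\alpha(I_1Z)I_1Y-\sum_{\beta=2,3}\bigl(\alpha(I_\beta Y)I_\beta Z+\alpha(I_\beta Z)I_\beta Y\bigr).
\]
The terms with $\beta=2,3$ lie in $I_2TF$, so their tangential part is zero. The remaining terms, $\alpha(Y)Z+\alpha(Z)Y-\alpha(I_1Y)I_1Z-\alpha(I_1Z)I_1Y$, vanish for all $Y,Z$ if and only if $\alpha|_{TF}=0$. So it suffices to prove $d\|\bar\mu\|=0$ along $TF$.

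To prove this, I would use $S^1$-invariance: since $\nabla$ preserves $\nu$ and the action preserves $\nabla$, the action preserves $\nu$ up to a constant. As $S^1$ is compact, $\nu$ is invariant, hence $e$ is invariant up to sign, and so $\|\bar\mu\|$ is $S^1$-invariant. Invariance alone gives nothing on $F$, though. Instead I would use Lemma \ref{a1} together with $\nabla\bar\mu=\sum(\xi\circ I_\alpha)\otimes I_\alpha$, so that $d\|\bar\mu\|=\xi\circ I_1$. I also need $\xi|_{TF}=0$ (note $\xi\circ I_1|_{TF}=0$ iff $\xi|_{TF}=0$ because $TF$ is $I_1$-invariant). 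By Corollary \ref{ex_m_map}, $\bar\mu=k f_Q^{\nabla}$ near $F$, and $\nabla f_Q=0$ along $TF$ by the proof of Theorem \ref{thm_fixpts}, via Lemma \ref{der_nab_q} with $R^\nabla_{X,\cdot}=0$ on $F$. Hence $\nabla_Y\bar\mu=(Yk)f_Q$ for $Y\in TF$. Comparing with $\sum_\alpha \xi(I_\alpha Y)I_\alpha$ gives $\xi(I_2Y)=\xi(I_3Y)=0$ for all $Y\in TF$.

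The main obstacle is upgrading this to $\xi(I_1Y)=0$, i.e. $Yk=0$. My plan is to use that $\xi$ is the one-form attached to the invariant section $\bar\mu$, so $\xi$ is $S^1$-invariant; its restriction to $T_xM$ at a fixed point $x$ is therefore annihilated by the linearized action $\nabla X$. Writing $\xi(I_2 Y)=0$ as $\xi|_{I_2TF}=0$ and $\xi|_{I_3TF}=0$ (recall $I_3TF=I_2TF$), I would try to use the eigenspace structure of $\nabla X=f_Q+f_Z$. On $TF$ it is zero and on $I_2TF$ it is invertible, so an invariant covector must kill $\mathrm{Im}(\nabla X)$; this contains $I_2TF$ by Lemma \ref{cpx_submfd}, but not $TF$. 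If that fails, I would fall back on differentiating $\bar\mu=kf_Q$ once more and using the twistor equation's integrability to force $dk|_{TF}=0$.
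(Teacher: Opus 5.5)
\emph{Verdict: the proof has a genuine gap. The missing step cannot be supplied, because it is false under the stated hypotheses.}

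\emph{What is right.} Your reduction is correct. You show $\alpha(I_2Y)=\alpha(I_3Y)=0$ for $Y\in TF$. The two induced connections on $F$ therefore differ by $\alpha(Y)Z+\alpha(Z)Y-\alpha(IY)IZ-\alpha(IZ)IY$, so the statement is equivalent to $\alpha|_{TF}=0$, i.e.\ to $\|\bar\mu\|$ being locally constant on $F$ (your $Yk=0$). A small point: Corollary \ref{ex_m_map} gives $\bar\mu=k f_Q^{\nabla^{\mu}}$, not $k f_Q^{\nabla}$, off $F$. This is harmless, since you only differentiate along $TF$.

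\emph{Why $Yk=0$ fails.} Your invariance idea only controls $\xi$ on $\mathrm{Im}(\nabla X)\supset I_2TF$, as you noticed. No integrability argument can force $dk|_{TF}=0$ either, as the following example shows.
\begin{itemize}
\item Let $M=\mathbb{H}^n$ ($n\geq 2$), with $Q$ spanned by the right multiplications $R_q(z)=zq$. Let $\nabla$ be the flat connection; it is $S^1$-invariant and preserves the Euclidean volume $\nu$.
\item Let $S^1$ act by $z\mapsto e^{i\theta}ze^{-i\theta}$. Then $X(z)=iz-zi$, $F=\mathbb{C}^n$, and $f_Q=-R_i\neq 0$.
\item Put $\bar\mu=R_{m(z)}$ with $m(z)=-i-\frac{1}{2}\sum_l \bar z_l\, i\, z_l$. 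A direct computation gives $d\,\mathrm{Re}(m\bar q)=\xi\circ R_q$ for $q=i,j,k$, with $\xi(W)=\sum_l\mathrm{Re}(\bar z_l\, i\, W_l)$. So $\mu=\bar\mu\otimes e$ is a twistor function.
\item Since $m(e^{i\theta}ze^{-i\theta})=e^{i\theta}m(z)e^{-i\theta}$, both $\bar\mu$ and $I$ are $S^1$-invariant.
\item On $F$ one has $m=-(1+\frac{1}{2}|z|^2)\,i$. Hence $F\cap\mu^{-1}(0)=\emptyset$ and $I=f_Q$ there.
\item However $\alpha|_{TF}=-\frac{1}{2}\,d\log(1+\frac{1}{2}|z|^2)\neq 0$, so the two induced connections on $F$ differ by the nonzero term above.
\end{itemize}

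\emph{The paper's proof has the same hole.} It uses $f_Q^{\nabla^{\mu}}=f_Q^{\nabla}$ on $F$ to replace $\nabla^{\mu}f_Q^{\nabla^{\mu}}$ by $\nabla^{\mu}f_Q^{\nabla}$, concludes $[S^{\alpha},f_Q]=0$, and hence $\alpha=0$. But in fact $f_Q^{\nabla^{\mu}}=f_Q^{\nabla}-\sum_\beta\alpha(I_\beta X)I_\beta$ everywhere. The $\nabla_Y$-derivative of the correction term at $F$ is $-\sum_\beta\alpha(I_\beta\nabla_Y X)I_\beta$, which vanishes for $Y\in TF$ but not for normal $Y$. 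So the argument legitimately gives $[S^{\alpha}_Y,f_Q]=0$ only for $Y\in TF$. That is just $\alpha\circ I_2=\alpha\circ I_3=0$ on $TF$ --- exactly what you already have --- and says nothing about $\alpha|_{TF}$.

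\emph{What does hold.} The statement is true under the extra hypothesis that $\|\bar\mu\|$ is constant on $F$. An example is when $\bar\mu$ is a constant multiple of $f_Q^{\nabla}$, as for the quaternionic K\"ahler moment map in Corollary \ref{3.22}. There Theorem \ref{thm_fixpts} gives $\alpha|_{TF}=0$, and your computation finishes the proof. In general the two induced connections on $F$ are only c-projectively equivalent.
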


\begin{proof}
By Lemma \ref{diff_conn}, the difference between 
$\nabla^{\mu}$ and $\nabla$ is given by  
\begin{align*}
\nabla^{\mu}-\nabla=S^{\alpha}, \,\, 
\alpha=- \frac{1}{2} d \log \| \bar{\mu} \|, 
\end{align*}
where $\mu=\bar{\mu} \otimes e$ and $(e,\nu)$ is the associated pair. 
Since $L_{X}\nabla=0$, we have $\nabla f_{Q}^{\nabla}=0$ on $F$ by Lemma \ref{der_nab_q}.
Also $\nabla^{\mu}$ is $S^{1}$-invariant due to Corollary \ref{inv_mu_conn}, which leads that 
$\nabla^{\mu} f_{Q}^{\nabla^{\mu}}=0$ on $F$. 
Since $f_{Q}^{\nabla}=f_{Q}^{\nabla^{\mu}}$ on $F$, we have
\begin{align*}
0 &= \nabla^{\mu} f_{Q}^{\nabla^{\mu}} = \nabla^{\mu} f_{Q}^{\nabla}
    =(\nabla+S^{\alpha})  f_{Q}^{\nabla}=[ S^{\alpha},  f_{Q}^{\nabla} ] 
    =2 \| f_{Q}^{\nabla} \| ((\alpha \circ I_{2}) I_{3} - (\alpha \circ I_{3}) I_{2}),
\end{align*}
where $I_{1}=\frac{f_{Q}^{\nabla}}{\| f_{Q}^{\nabla} \|}$ and $(I_{1},I_{2},I_{3})$ is an admissible frame. Then it holds that $\alpha=0$ on $F$. 
Therefore, the induced connections on $F$ from $\nabla^{\mu}$ and $\nabla$ coincide. 
\end{proof}

The following corollary will be used later to investigate the nonexistence of (locally) hypercomplex 
structures from fixed-point sets (Corollary \ref{QK_Fix}). 

\begin{corollary}\label{3.22}
Let $(M,Q,g)$ be a quaternionic K\"ahler manifold. 
Suppose that $S^{1}$ acts on $M$ isometrically, 
and let $F$ be a connected component of the fixed-point set 
such that $f_{Q} \neq 0$ and $F \cap \mu^{-1}(0)=\emptyset$, 
where $\mu$ denotes the quaternionic K\"ahler moment map 
associated with the $S^{1}$-action. 
Then the connection on $F$ induced by $\nabla^{\mu}$ 
coincides with the Levi-Civita connection of $F$.
\end{corollary}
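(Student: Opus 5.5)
The plan is to reduce the corollary to Theorem \ref{thm_1a} with $\nabla$ taken to be the Levi-Civita connection $\nabla^{g}$. For a quaternionic K\"ahler manifold $(M,Q,g)$, the connection $\nabla^{g}$ is torsion-free and preserves $Q$, so it is a quaternionic connection. It also preserves the Riemannian volume form $\nu_{g}$. Since the $S^{1}$-action is isometric, $L_{X}\nabla^{g}=0$, so $\nabla^{g}$ is $S^{1}$-invariant. By the Lemma on the existence of $e\in\Gamma(L^{s_{0}})$, we may fix $e$ with $(e,\pm\nu_{g})$ an associated pair; the sign is harmless because $\nabla^{g}(\pm\nu_{g})=0$ either way.

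Next we identify the twistor function. The quaternionic K\"ahler moment map satisfies $\nabla^{g}\mu=\sum_{i}g(I_{i}X,\cdot)\otimes I_{i}$, which is exactly \eqref{tw_eq1} with $\xi=g(X,\cdot)$ up to sign conventions. Hence $\mu\otimes e$ is a twistor function. Equivalently, the Remark before Corollary \ref{ex_m_map} gives $\mu=a f_{Q}^{\nabla^{g}}$ with $a$ a nonzero constant multiple of the scalar curvature, and Lemma \ref{m_fun} then applies because the Ricci tensor of $\nabla^{g}$ is a multiple of $g$, hence $Q$-hermitian. The same Remark shows that the complex structure $I$ induced by $\mu$ satisfies $L_{X}I=0$, i.e.\ it is $S^{1}$-invariant. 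The hypothesis $F\cap\mu^{-1}(0)=\emptyset$ is given, as is $f_{Q}\neq 0$. One point needs care: if the scalar curvature were zero, $a=0$ and $\mu\equiv 0$, which would contradict $F\cap\mu^{-1}(0)=\emptyset$ whenever $F\neq\emptyset$. So nonzero scalar curvature is implicitly forced, and we would state this explicitly.

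On the open set $M\setminus\mu^{-1}(0)$, which contains $F$, $\mu$ is therefore a nowhere vanishing twistor function, so $\nabla^{\mu}$ is defined there. All the hypotheses of Theorem \ref{thm_1a} then hold, and it gives that the connections induced on $F$ by $\nabla^{\mu}$ and by $\nabla^{g}$ coincide.

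It remains to see that the connection induced on $F$ by $\nabla^{g}$ is the Levi-Civita connection of $(F,g|_{F})$. By Theorem \ref{thm_fixpts} and Lemma \ref{fix_point}, applied with the $S^{1}$-invariant connection $\nabla^{g}$, the submanifold $F$ is totally geodesic for $\nabla^{g}$. Consequently $\nabla^{g}_{Y}Z\in TF$ for $Y,Z$ tangent to $F$, and the induced connection is torsion-free and metric for $g|_{F}$; by uniqueness it is the Levi-Civita connection of $F$. The main point to check is that the induced connection in Theorem \ref{thm_1a} is the tangential projection, which agrees with $\nabla^{g}$ on $TF$ by total geodesy. With the scalar-curvature caveat handled as above, the rest is bookkeeping of hypotheses.
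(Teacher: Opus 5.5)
Your proposal is correct and follows the paper's route. The paper's proof consists exactly of noting that $\nabla^{g}$ is $S^{1}$-invariant and preserves the Riemannian volume form, so Theorem \ref{thm_1a} applies with $\nabla=\nabla^{g}$. Your additional checks (that $\mu\otimes e$ is a twistor function, that $L_{X}I=0$ via the Remark, and that total geodesy of $F$ makes the $\nabla^{g}$-induced connection the Levi-Civita connection of $F$) make the paper's implicit steps explicit.

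One correction to your scalar-curvature caveat: the conclusion that $\tau\neq 0$ is forced is right, but the reasoning is not. Lemma \ref{der_nab_q} together with the quaternionic K\"ahler curvature identities gives $\nabla^{g}f_{Q}=c\,\tau\,\nabla^{g}\mu$ with $c\neq 0$ a dimensional constant. So the factor $a$ in $\mu=af_{Q}$ is proportional to $1/\tau$, and $\tau=0$ does not yield $\mu\equiv 0$. The correct argument is this: for $\tau=0$ the bundle $Q$ is $\nabla^{g}$-flat, so in a local parallel frame the moment map equation reads $d\mu_{i}=g(I_{i}X,\cdot)$. This is solvable only if $L_{X}\omega_{i}=0$ for the K\"ahler forms $\omega_{i}=g(I_{i}\cdot,\cdot)$, i.e.\ $[f_{Q},I_{i}]=0$ for all $i$. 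That forces $f_{Q}=0$, which contradicts $f_{Q}\neq 0$ on $F$.
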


\begin{proof}
The Levi-Civita connection $\nabla^{g}$ is $S^{1}$-invariant, 
and the volume form induced by $g$ is parallel with respect to $\nabla^{g}$.
\end{proof}

\subsection{A compatible complex structure of closed type}

For any admissible frame $(I_{1},I_{2},I_{3})$ of $Q$, there are connection forms $\omega^{\nabla}_{1}$,  
$\omega^{\nabla}_{2}$, $\omega^{\nabla}_{3}$ with respect to a quaternionic connection $\nabla$ with 
\begin{align}\label{conn_f}
\nabla I_{\alpha}=\omega^{\nabla}_{\gamma} \otimes I_{\beta} - \omega^{\nabla}_{\beta} \otimes I_{\gamma}, 
\end{align}
where $(\alpha,\beta,\gamma)$ is any cyclic permutation of $(1,2,3)$. It is known that 
$I_{1}$ is integrable if and only if $\omega^{\nabla}_{2} \circ I_{2}=\omega^{\nabla}_{3} \circ I_{3}$ for any 
admissible frame $(I_{1},I_{2},I_{3})$. The integrability of $I_{1}$ is independent of the choice of 
a quaternionic connection $\nabla$. Hence $\omega^{\nabla}_{2} \circ I_{2}=\omega^{\nabla}_{3} \circ I_{3}$ holds for any quaternionic connection if $I_{1}$ is integrable. 
 
\begin{lemma}\label{lie}
For admissible frames $(I_{1},I_{2},I_{3})$ and $(\bar{I}_{1},\bar{I}_{2},\bar{I}_{3})$, 
if $I_{1}=\bar{I}_{1}$, then we have 
\[ \omega^{\nabla}_{2} \circ I_{2}+ \omega^{\nabla}_{3} \circ I_{3}
=\bar{\omega}^{\nabla}_{2} \circ \bar{I}_{2}+ \bar{\omega}^{\nabla}_{3} \circ \bar{I}_{3}, \]
where $\omega^{\nabla}_{\alpha}$ (resp. $\bar{\omega}^{\nabla}_{\alpha}$) is 
the connection form with respect to $(I_{1},I_{2},I_{3})$ (resp. $(\bar{I}_{1},\bar{I}_{2},\bar{I}_{3})$)
given by \eqref{conn_f}.  
\end{lemma}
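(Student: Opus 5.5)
Two admissible frames sharing the first element differ by a rotation in the $(I_{2},I_{3})$-plane. So the plan is to write $\bar{I}_{2}=\cos\phi\, I_{2}+\sin\phi\, I_{3}$ and $\bar{I}_{3}=-\sin\phi\, I_{2}+\cos\phi\, I_{3}$ for a locally defined function $\phi$, and then compare the connection forms directly. Orientation is forced by $\bar{I}_{3}=\bar{I}_{1}\bar{I}_{2}=I_{1}\bar{I}_{2}$, so no reflection case arises.

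First I compute $\nabla\bar{I}_{1}=\nabla I_{1}=\omega_{3}\otimes I_{2}-\omega_{2}\otimes I_{3}$ and rewrite the right-hand side in the barred frame. The inverse relations are $I_{2}=\cos\phi\,\bar{I}_{2}-\sin\phi\,\bar{I}_{3}$ and $I_{3}=\sin\phi\,\bar{I}_{2}+\cos\phi\,\bar{I}_{3}$. Comparing with $\nabla\bar{I}_{1}=\bar{\omega}_{3}\otimes\bar{I}_{2}-\bar{\omega}_{2}\otimes\bar{I}_{3}$ gives
\begin{align*}
\bar{\omega}_{3}&=\cos\phi\,\omega_{3}-\sin\phi\,\omega_{2},\\
\bar{\omega}_{2}&=\sin\phi\,\omega_{3}+\cos\phi\,\omega_{2}.
\end{align*}
Note that $\omega_{1}$ (which picks up $d\phi$) never enters, since only $\nabla I_{1}$ is used.

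Next I substitute these into $\bar{\omega}_{2}\circ\bar{I}_{2}+\bar{\omega}_{3}\circ\bar{I}_{3}$ and expand, using the rotation formulas for $\bar{I}_{2}$ and $\bar{I}_{3}$. The $\omega_{2}\circ I_{2}$ and $\omega_{3}\circ I_{3}$ terms come with coefficient $\cos^{2}\phi+\sin^{2}\phi=1$. The cross terms $\omega_{2}\circ I_{3}$ and $\omega_{3}\circ I_{2}$ come with coefficients $\sin\phi\cos\phi-\sin\phi\cos\phi=0$. This yields the identity.

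There is no real obstacle; the only care needed is in the sign conventions of \eqref{conn_f}, so I would double-check the inverse rotation and the cross-term cancellation.
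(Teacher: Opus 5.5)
Your proof is correct and follows the paper's argument. Both write the two frames as a rotation in the $(I_{2},I_{3})$-plane, read off how $\omega_{2},\omega_{3}$ transform from the single equation for $\nabla I_{1}$, and check that the $\cos^{2}\phi+\sin^{2}\phi$ terms recombine while the cross terms cancel. Your signs follow \eqref{conn_f} exactly. The paper's first display ($\nabla I=\omega_{2}\otimes I_{3}-\omega_{3}\otimes I_{2}$) is off by an overall sign from \eqref{conn_f}, which is harmless because the identity is homogeneous in the $\omega$'s.
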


\begin{proof}
For simplicity, we omit $\nabla$ for the subscript of $\omega_{\alpha}^{\nabla}$ ($\alpha=2, 3$).
Since 
\begin{align*}
&\nabla I=\omega_{2} \otimes I_{3}-\omega_{3} \otimes I_{2}
=\bar{\omega}_{2} \otimes \bar{I}_{3}-\bar{\omega}_{3} \otimes \bar{I}_{2}, \\
&I_{2} =\cos \theta \bar{I}_{2}-\sin \theta  \bar{I}_{3}, \,\,\,
I_{3} =\sin \theta \bar{I}_{2}+\cos \theta  \bar{I}_{3},
\end{align*}
we have 
\begin{align*}
\omega_{2} =\cos \theta \bar{\omega}_{2}-\sin \theta  \bar{\omega}_{3}, \,\,\,
\omega_{3} =\sin \theta \bar{\omega}_{2}+\cos \theta  \bar{\omega}_{3}.
\end{align*}
Then it holds that 
\begin{align*}
\omega_{2} \circ I_{2}
&=\cos^{2} \theta (\bar{\omega}^{\nabla}_{2} \circ \bar{I}_{2}) 
-\sin \theta \cos \theta (\bar{\omega}^{\nabla}_{2} \circ \bar{I}_{3}) 
-\sin \theta \cos \theta (\bar{\omega}^{\nabla}_{3} \circ \bar{I}_{2})
+ \sin^{2} \theta (\bar{\omega}^{\nabla}_{3} \circ \bar{I}_{3}),  \\
\omega_{3} \circ I_{3}
&=\sin^{2} \theta (\bar{\omega}^{\nabla}_{2} \circ \bar{I}_{2}) 
+\sin \theta \cos \theta (\bar{\omega}^{\nabla}_{2} \circ \bar{I}_{3}) 
+\sin \theta \cos \theta (\bar{\omega}^{\nabla}_{3} \circ \bar{I}_{2})
+ \cos^{2} \theta (\bar{\omega}^{\nabla}_{3} \circ \bar{I}_{3}). 
\end{align*}
This completes the proof. 
\end{proof}

By virtue of Lemma \ref{lie}, for a compatible almost complex structure $I$ and a quaternionic connection 
$\nabla$, we can define a one-form
\[ \eta^{\nabla}_{I}:=\omega^{\nabla}_{2} \circ I_{2}+ \omega^{\nabla}_{3} \circ I_{3} \]
on $M$. 
Since $\omega^{\nabla^{\prime}}_{\alpha}=\omega^{\nabla}_{\alpha} - 2 \xi \circ I_{\alpha}$ 
($\alpha=1,2,3$), we have $\eta^{\nabla^{\prime}}_{I}=\eta^{\nabla}_{I}+4 \xi$. 
As Remark 5.2 in \cite{AM1}, 
if both Ricci tensors of quaternionic connections $\nabla$ and $\nabla^{\prime}$ with  
$\nabla^{\prime}=\nabla+S^{\xi}$ are symmetric, then $\xi$ is exact. 
Hence, we conclude as follows. 

\begin{lemma}\label{3_29}
Let $I$ be a compatible almost complex structure, and let $\nabla$, $\nabla^{\prime}$
be quaternionic connections with symmetric Ricci tensors. Then we have 
\[ d \eta^{\nabla^{\prime}}_{I}=d \eta^{\nabla}_{I}.  \]
In particular, the property that  $\eta^{\nabla}_{I}$ is closed is independent of a choice of 
a Ricci symmetric quaternionic connection.
\end{lemma}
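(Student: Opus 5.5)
The plan is to read the statement off the transformation rule for $\eta^{\nabla}_{I}$ derived just before it, combined with the exactness of the difference one-form quoted from Remark 5.2 in \cite{AM1}.

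First, by Lemma \ref{q_conn_eq} we write $\nabla^{\prime}=\nabla+S^{\xi}$ for a unique one-form $\xi$. From \eqref{conn_f}, computing $\nabla^{\prime}I_{\alpha}=\nabla I_{\alpha}+[S^{\xi},I_{\alpha}]$ gives $\omega^{\nabla^{\prime}}_{\alpha}=\omega^{\nabla}_{\alpha}-2\xi\circ I_{\alpha}$. We already used the analogous bracket $[S^{\xi}_{X},I_{1}]=2\xi(I_{2}X)I_{3}-2\xi(I_{3}X)I_{2}$ before the Proposition of \cite{S}, and its cyclic versions suffice here. Composing with $I_{2}$ and $I_{3}$ and using $I_{\alpha}^{2}=-\mathrm{id}$ yields
\[
\eta^{\nabla^{\prime}}_{I}=\eta^{\nabla}_{I}+4\xi .
\]
By Lemma \ref{lie}, this identity does not depend on which admissible frame with $I_{1}=I$ is chosen, so it holds globally on $M$.

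Second, since both $\nabla$ and $\nabla^{\prime}$ are Ricci symmetric, we invoke Remark 5.2 of \cite{AM1}. The skew part of the Ricci tensor changes by a fixed nonzero multiple of $d\xi$ (a multiple of $n+1$), so symmetry of both Ricci tensors forces $d\xi=0$; this closedness is all that is needed. Applying $d$ to the identity gives $d\eta^{\nabla^{\prime}}_{I}=d\eta^{\nabla}_{I}+4\,d\xi=d\eta^{\nabla}_{I}$. The \lq\lq in particular\rq\rq\ clause is then immediate: $\eta^{\nabla}_{I}$ is closed exactly when $\eta^{\nabla^{\prime}}_{I}$ is.

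The only step requiring care is the sign and coefficient in $\omega^{\nabla^{\prime}}_{\alpha}=\omega^{\nabla}_{\alpha}-2\xi\circ I_{\alpha}$, which I would verify from the bracket formula. For the closedness conclusion, though, the exact coefficient is irrelevant, since it only multiplies $d\xi=0$.
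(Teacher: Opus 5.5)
Your proposal is correct and follows the paper's argument: it combines the transformation rule $\eta^{\nabla^{\prime}}_{I}=\eta^{\nabla}_{I}+4\xi$ with Remark 5.2 of \cite{AM1}, which says that Ricci symmetry of both connections constrains $\xi$. You use only $d\xi=0$, which is exactly what the trace-of-curvature computation gives and all the lemma needs; the paper states that $\xi$ is exact, which holds in general only locally.
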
   

\begin{proof}
Any two Ricci symmetric quaternionic connections are related by an exact one-form in 
\eqref{q_conn}. 
\end{proof}

Lemma \ref{3_29} leads the following definition.

\begin{definition}\label{def_close_type}
Let $(M,Q)$ be a quaternionic manifold, and let $I$ be a compatible almost complex structure with $Q$. 
We say that $I$ is {\cmssl closed type}
if $\eta^{\nabla}_{I}$ is closed for one 
(and hence any) Ricci symmetric quaternionic connection $\nabla$. 
\end{definition}

For a quaternionic complex manifold $(M,Q)$ given by a twistor function $\mu$, 
the Ricci tensor of $\mu$-connection $\nabla^{\mu}$ is symmetric and 
$\eta^{\nabla^{\mu}}_{I}=0$, where $I$ is given by $\mu$. 
Therefore, if $I$ is not of closed type, then 
there exists no twistor function $\mu$ so that $\mu$ induces $I$.

If $I$ is integrable, then 
$\omega^{\nabla}_{2} \circ I_{2}=\omega^{\nabla}_{3} \circ I_{3}(=(1/2) \eta^{\nabla}_{I})$
is also a globally defined one-form. In fact, for $(I_{1}=I, I_{2}, I_{3})$ (resp. ($\bar{I}_{1}=I, \bar{I}_{2}, \bar{I}_{3})$) 
is an admissible frame defined on $U$ (resp. $V$), we see that 
\[ \omega^{\nabla}_{2} \circ I_{2}=\omega^{\nabla}_{3} \circ I_{3}=\frac{1}{2}\eta^{\nabla}_{I} =\bar{\omega}^{\nabla}_{2} \circ \bar{I}_{2}=\bar{\omega}^{\nabla}_{3} \circ \bar{I}_{3} \]
on $U \cap V$.

\begin{lemma}\label{ex_tw}
Let $(M,Q)$ be a quaternionic manifold with a volume form $\nu$ 
and $I$ be a complex structure compatible with $Q$. 
Let $\nabla$ be a quaternionic connection such that $\nabla \nu=0$ and 
$(e,\nu)$ be the associated pair. 
If a section $\mu=f I \otimes e \in \Gamma(Q \otimes L^{s_{0}})$ is 
a nowhere vanishing twistor function if and only if $\eta_{I}^{\nabla}=2 d (\log |f|) $ 
for a nowhere vanishing function $f$.   
\end{lemma}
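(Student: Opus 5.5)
The plan is to compute $\nabla^{Q,s_0}\mu$ directly with respect to the connection $\nabla$ satisfying $\nabla\nu=0$. Since $(e,\nu)$ is an associated pair, $\nabla^{s_0}e=0$, so by \eqref{tw_eq1} the section $\mu=fI\otimes e$ is a twistor function exactly when
\begin{align*}
\nabla(fI)=\sum_{\alpha=1}^{3}(\xi\circ I_\alpha)\otimes I_\alpha
\end{align*}
for some one-form $\xi$. Here $(I_1=I,I_2,I_3)$ is a local admissible frame. Expanding the left side with \eqref{conn_f} gives
\begin{align*}
\nabla(fI)=df\otimes I_1+f\,\omega_3\otimes I_2-f\,\omega_2\otimes I_3 .
\end{align*}
Here $\omega_\alpha=\omega^\nabla_\alpha$ are the connection forms of the frame. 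We then compare the coefficients of $I_1,I_2,I_3$.

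For the ``only if'' direction, we read off the three coefficient equations: $\xi\circ I_1=df$, $\xi\circ I_2=f\omega_3$ and $\xi\circ I_3=-f\omega_2$. The first gives $\xi=-df\circ I_1$, so that $\xi\circ I_2=-df\circ I_1I_2=-df\circ I_3$ and $\xi\circ I_3=df\circ I_2$. The remaining two equations therefore become
\begin{align*}
f\omega_3=-df\circ I_3,\qquad f\omega_2=-df\circ I_2 .
\end{align*}
Composing these with $I_3$ and $I_2$ respectively yields $f\,\omega_3\circ I_3=df$ and $f\,\omega_2\circ I_2=df$. Since $f$ vanishes nowhere (because $\mu$ does), we obtain $\omega_2\circ I_2=\omega_3\circ I_3=d\log|f|$. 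Hence $\eta^\nabla_I=2\,d\log|f|$.

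For the converse, we start from $\eta^\nabla_I=2\,d\log|f|$ and use the integrability of $I$: as recalled before the lemma, it gives $\omega_2\circ I_2=\omega_3\circ I_3=\tfrac12\eta^\nabla_I$. This step is essential, because without integrability we would only control the sum of the two forms, not each one separately. So $\omega_2\circ I_2=\omega_3\circ I_3=d\log|f|$. We set $\xi:=-df\circ I$ and reverse the computation above, which shows that all three coefficient equations hold; hence $\mu$ is a twistor function. It is nowhere vanishing because $f$ is.

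The only delicate point is the sign bookkeeping with the convention \eqref{conn_f} and the quaternionic relations. The essential inputs are $\nabla^{s_0}e=0$ and the integrability identity $\omega_2\circ I_2=\omega_3\circ I_3$. The frame dependence is harmless, since both $\eta^\nabla_I$ and the forms $\omega_\alpha\circ I_\alpha$ are globally defined by Lemma \ref{lie} and the remark after it.
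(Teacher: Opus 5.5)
Your proof is correct and follows the paper's argument. You expand $\nabla(fI)$ through the connection forms of \eqref{conn_f} and match coefficients against $\sum_\alpha(\xi\circ I_\alpha)\otimes I_\alpha$ to get $f\,\omega_2\circ I_2=f\,\omega_3\circ I_3=df$; for the converse you take $\xi=-df\circ I$ and reverse the computation, and your remark that this direction needs integrability of $I$ (to get $\omega_2\circ I_2=\omega_3\circ I_3=\tfrac12\eta^\nabla_I$ rather than only their sum) makes explicit a step the paper uses silently.
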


\begin{proof}
Suppose that $\mu=f I \otimes e $ is a twistor function. Then we have 
\begin{align*}
\nabla (f I \otimes e)=df \otimes I+ f(\nabla I)
=(\xi \circ I) \otimes I+(\xi \circ I_{2}) \otimes I_{2}+(\xi \circ I_{3}) \otimes I_{3},
\end{align*}
where $(I_{1}=I,I_{2},I_{3})$ is an admissible frame. This implies that 
$f \omega_{2} \circ I_{2}=f \omega_{3} \circ I_{3}=df$. Then we have $\eta_{I}^{\nabla}=2 d (\log |f|) $.
Conversely, setting $\xi=-df \circ I$, we have
\ \begin{align*}
\nabla (f I \otimes e)&=df \otimes I+ f(\nabla I) \\
&=df \otimes I+f(\omega^{\nabla}_{3} \otimes I_{2} - \omega^{\nabla}_{2} \otimes I_{3}) \\
&=df \otimes I
+f \left( \left(-\frac{df}{f} \circ I_{3} \right) \otimes I_{2}
             +\left(\frac{df}{f} \circ I_{2} \right) \otimes I_{3} \right) \\
&=(\xi \circ I) \otimes I +(\xi \circ I_{2}) \otimes I_{2}+(\xi \circ I_{3}) \otimes I_{3},
\end{align*}
which means that $\mu$ is a twistor function. 
\end{proof}

\begin{lemma}\label{3_33}
Let $(M,Q)$ be a quaternionic manifold with a compatible complex structure $I$. 
If $\nabla$ and $\nabla^{\prime}$ are quaternionic connections such that $\nabla I=\nabla^{\prime} I=0$, then $\nabla=\nabla^{\prime}$.  
\end{lemma}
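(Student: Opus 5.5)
By Lemma \ref{q_conn_eq}, the two connections differ by a deformation tensor: we write $\nabla^{\prime}=\nabla+S^{\xi}$ for some one-form $\xi$. It then suffices to show that $\xi=0$. The argument is the same linear-algebra step as at the end of the proof of Lemma \ref{inv_111}, now applied directly to the connections rather than to a Lie derivative.

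Fix a local admissible frame $(I_{1}=I,I_{2},I_{3})$. Since $\nabla I=0$ and $\nabla^{\prime}I=0$, for every $X\in TM$ we have
\[
0=\nabla^{\prime}_{X}I-\nabla_{X}I=[S^{\xi}_{X},I].
\]
We already computed this commutator in the derivation of the twistor operator:
\[
[S^{\xi}_{X},I_{1}]=2\,\xi(I_{2}X)\,I_{3}-2\,\xi(I_{3}X)\,I_{2}.
\]
Because $I_{2}$ and $I_{3}$ are linearly independent at each point, we get $\xi(I_{2}X)=0$ and $\xi(I_{3}X)=0$ for all $X$. As $I_{2}$ is invertible, $I_{2}X$ ranges over all of $T_{x}M$, so $\xi=0$. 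Hence $S^{\xi}=0$ and $\nabla^{\prime}=\nabla$.

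The only point that needs verification is the commutator formula for $[S^{\xi}_{X},I_{1}]$. The terms of $S^{\xi}_{X}$ of the form $\xi(X)\,\mathrm{Id}$ and $-\xi(I_{\alpha}X)I_{\alpha}$ give $-2\xi(I_{2}X)[I_{2},I_{1}]/2$-type contributions. The remaining terms $Y\mapsto \xi(Y)X-\sum_{\alpha}\xi(I_{\alpha}Y)I_{\alpha}X$ commute with $I_{1}$, since the map $Y\mapsto\bigl(\xi(Y),\xi(I_{1}Y),\xi(I_{2}Y),\xi(I_{3}Y)\bigr)$ intertwines appropriately. This is a routine check and was already used in the proofs of Lemma \ref{inv_111} and Theorem \ref{thm_1a}, so there is no real obstacle.
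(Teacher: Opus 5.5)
Your proof is correct and follows the paper's argument exactly: you write $\nabla^{\prime}=\nabla+S^{\xi}$, note that $\nabla^{\prime}I=\nabla I=0$ forces $[S^{\xi}_{X},I]=0$, and read off $\xi=0$ from $[S^{\xi}_{X},I_{1}]=2\,\xi(I_{2}X)I_{3}-2\,\xi(I_{3}X)I_{2}$. Your check of the commutator formula is also right, and it supplies detail the paper leaves implicit: the part $Y\mapsto\xi(Y)X-\sum_{\alpha}\xi(I_{\alpha}Y)I_{\alpha}X$ commutes with $I_{1}$, and the terms $-\xi(I_{\alpha}X)I_{\alpha}$ for $\alpha=2,3$ contribute $-\xi(I_{\alpha}X)[I_{\alpha},I_{1}]$.
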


\begin{proof}
The difference tensor $S^{\xi}$ for $\nabla$ and $\nabla^{\prime}$ 
in Lemma \ref{q_conn_eq} satisfies that $[S^{\xi} ,I]=0$. This means that $\xi=0$. 
\end{proof}

The following proposition means that a compatible complex structure $I$ of closed type 
has a collection of quaternionic complex manifolds and twistor functions. 

\begin{proposition}\label{th_2a}
Let $(M,Q)$ be a quaternionic manifold, and let $I$ be a compatible complex structure 
with $Q$. Suppose that an $S^{1}$-action on $M$ preserves both $Q$ and $I$.
If $I$ is of closed type, then there exists a unique quaternionic connection $\nabla^{I}$ 
such that $\nabla^{I} I=0$. Moreover, the complex structure $I$ is 
induced from a locally defined twistor function $\mu$, and $\nabla^{I}$ locally coincides with 
the $\mu$-connection.  
\end{proposition}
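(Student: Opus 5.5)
Pick any Ricci symmetric quaternionic connection $\nabla$. I would take the one preserving some volume form $\nu$, whose Ricci tensor is symmetric by \cite{AM1}. Since $I$ is integrable, $\omega^{\nabla}_{2}\circ I_{2}=\omega^{\nabla}_{3}\circ I_{3}=\frac12\eta^{\nabla}_{I}$ is a global one-form. By the closed-type assumption, $d\eta^{\nabla}_{I}=0$.

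\textbf{Construction of $\nabla^{I}$.} Set $\nabla^{\prime}=\nabla+S^{\xi}$. By the transformation rule $\eta^{\nabla^{\prime}}_{I}=\eta^{\nabla}_{I}+4\xi$, taking $\xi=-\frac14\eta^{\nabla}_{I}$ gives a global quaternionic connection with $\eta^{\nabla^{\prime}}_{I}=0$. Integrability of $I$ then gives $\omega^{\nabla^{\prime}}_{2}\circ I_{2}=\omega^{\nabla^{\prime}}_{3}\circ I_{3}=0$, hence $\omega^{\nabla^{\prime}}_{2}=\omega^{\nabla^{\prime}}_{3}=0$. By \eqref{conn_f},
\[
\nabla^{\prime}I=\omega^{\nabla^{\prime}}_{3}\otimes I_{2}-\omega^{\nabla^{\prime}}_{2}\otimes I_{3}=0 .
\]
Define $\nabla^{I}:=\nabla^{\prime}$. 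Uniqueness is exactly Lemma \ref{3_33}. This step uses neither closedness nor the $S^{1}$-action, only integrability.

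\textbf{Local twistor function.} Closedness of $\eta^{\nabla}_{I}$ gives, on any contractible open set $U$, a positive function $f$ with $\eta^{\nabla}_{I}=2\,d\log f$: solve $d\log f=\frac12\eta^{\nabla}_{I}$ by the Poincar\'e lemma. Let $(e,\nu)$ be an associated pair on $U$. Lemma \ref{ex_tw} then shows that $\mu=fI\otimes e$ is a nowhere vanishing twistor function on $U$, and it induces $I$ (up to sign) because $\bar\mu/\|\bar\mu\|=I$. By Proposition \ref{integrable}, the $\mu$-connection $\nabla^{\mu}$ satisfies $\nabla^{\mu}I=0$. Lemma \ref{3_33}, applied on $U$, then gives $\nabla^{\mu}=\nabla^{I}|_{U}$.

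\textbf{Role of the $S^{1}$-action.} The invariance hypothesis is not needed for existence. Since $L_{X}I=0$ and $\nabla^{I}I=0$, Lemma \ref{inv_111} gives $L_{X}\nabla^{I}=0$, so $\nabla^{I}$ (and hence each local $\mu$-connection) is $S^{1}$-invariant. I would record this so that Theorem \ref{thm_1a} applies later.

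\textbf{Main obstacle.} The point needing care is the Ricci-symmetric normalization. The connection preserving a volume form must genuinely have symmetric Ricci tensor, which I would cite from \cite{AM1}. One should also check that $\eta$ is well defined independently of the frame, which is Lemma \ref{lie}, so that the Poincar\'e lemma applies to a global closed form.
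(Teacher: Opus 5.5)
Your proof is correct, but you obtain $\nabla^{I}$ by a different route than the paper.

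The paper uses closedness from the start. It covers $M$ by open sets $U$ on which $\eta^{\nabla}_{I}=2\,d\log|f^{U}|$ and builds the local twistor functions $\mu^{U}=f^{U}I\otimes e$ by Lemma~\ref{ex_tw}. It then glues the local $\mu^{U}$-connections, checking via Lemma~\ref{diff_conn} that on $U\cap V$ they differ by $S^{\alpha}$ with $\alpha=-\frac{1}{2}d\log|f^{U}/f^{V}|$; this $\alpha$ must vanish because both connections preserve $I$.

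You instead write down the global connection $\nabla+S^{-\frac{1}{4}\eta^{\nabla}_{I}}$. You then use the integrability identity $\omega^{\nabla}_{2}\circ I_{2}=\omega^{\nabla}_{3}\circ I_{3}$ to see that this single correction kills both $\omega_{2}$ and $\omega_{3}$. The local twistor functions, and the identification $\nabla^{\mu}=\nabla^{I}|_{U}$ through Lemma~\ref{3_33}, then follow exactly as in the paper.

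Your route separates the hypotheses cleanly. Existence and uniqueness of $\nabla^{I}$ hold for every compatible complex structure, and closedness is used only to make $\nabla^{I}$ locally a $\mu$-connection. In fact, under $\nabla\mapsto\nabla+S^{\xi}$ the skew part of the Ricci tensor changes by a nonzero multiple of $d\xi$, so closed type is equivalent to $\nabla^{I}$ being Ricci symmetric. The paper's gluing gets existence and the local coincidence in one step, but it obscures that the first assertion needs no closedness. Neither proof uses the $S^{1}$-action.

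One small point: for a prescribed $\nu$ you may only get an associated pair $(e,-\nu)$. Either start from $e$ as the paper does, or note that $-\nu$ is preserved by the same $\nabla$.
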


\begin{proof}
Take an associated pair $(e,\nu)$ and $\nabla$ be a quaternionic connection on $(M,Q)$ 
such that $\nabla \nu=0$.
For any point $x \in M$, there exists an open set $U \subset M$ containing $x$ 
on which $\eta_{I}^{\nabla}$ is exact due to closedness of $I$. 
Choosing a function $f^{U}$ on $U$ such that $\eta_{I}^{\nabla}=2 d \log |f^{U}|$ and $f^{U}$ is nowhere vanishes, 
we obtain a twistor function 
$\mu^{U}=f^{U}I \otimes e$ on $U$ by Lemma \ref{ex_tw}. 
We obtain 
$\mu^{U}$-connection $\nabla^{\mu^{U}}$ on $U$. 
Considering this procedure for each point of $M$, we obtain a collection of $\mathrm{SL}(n,\mathbb{H}) \mathrm{U}(1)$-connections. 
Since $\nabla^{\mu^{U}}$ are related to $\nabla|_{U}$ on each $U$ by \eqref{q_conn},
the connections $\nabla^{\mu^{U}}$ and $\nabla^{\mu^{V}}$ are related to 
\[ 
\nabla^{\mu^{V}}=\nabla^{\mu^{U}}+S^{\alpha}, \,\,\,
\alpha=-\frac{1}{2} d \log \left| \frac{ f^{U} }{ f^{V} } \right|
\]
on $U \cap V$ by Lemma \ref{diff_conn}. Because $\nabla^{\mu^{V}} I =\nabla^{\mu^{U}} I=0$ on $U \cap V$, 
we have $\alpha=-\frac{1}{2} d \log \left| \frac{ f^{U} }{ f^{V} } \right|=0$. This defines 
the quaternionic connection $\nabla^{I}$ with $\nabla^{I} I=0$. 
The uniqueness of such a connection follows from Lemma \ref{3_33}. 
\end{proof}

By \cite[Corollary 7.2]{J} and \cite[Theorem 2.1]{P2}, a quaternionic complex manifold $M$ 
of $\dim M=4$ is exactly a K\"ahler surface with zero scalar curvature.
Therefore, when $\dim M=4$, 
the closed type condition for a compatible complex structure $I$ with $Q$ means that 
$M$ admits an open covering $\{ U_{i} \}$ of $M$ such that each $(U_{i},Q|_{U_{i}})$ 
is conformal to a K\"ahler manifold of zero scalar curvature.

\section{An obstruction to the existence of hypercomplex structures}
\setcounter{equation}{0}

\subsection{Curvatures of the $\mu$-connection}\label{sec_mu_conn}
As (1.5.2) in \cite{AM1}, we have 
\begin{align*}
 [ R^{\nabla}_{X,Y}, I_{\alpha} ] = \Omega^{\nabla}_{\gamma}(X,Y)I_{\beta}-\Omega^{\nabla}_{\beta}(X,Y)I_{\gamma}
\end{align*}
for $X$, $Y \in TM$, where $(\alpha,\beta,\gamma)$ is any cyclic permutation of $(1,2,3)$. 
From \cite[Corollary 1.4 and Remark 1.12]{AM1}, 
the two-form $\Omega_{\alpha}$ is 
given by 
\begin{align*}
 \Omega^{\nabla}_{\alpha}(X,Y)=2(B^{\nabla}(X,I_{\alpha}Y)-B^{\nabla}(Y,I_{\alpha}X))
\end{align*}
for $X$, $Y \in TM$. On the other hand, we have 
\begin{align*}
 \Omega^{\nabla}_{\alpha}=d \omega^{\nabla}_{\alpha}+\omega^{\nabla}_{\beta} \wedge \omega^{\nabla}_{\gamma}. 
\end{align*}

We consider the case of a quaternionic complex manifold defined by a nowhere vanishing twistor function $\mu$. 
Take an admissible frame $(I_{1}=I,I_{2},I_{3})$, where $I$ is a complex structure defined from $\mu$. 
Since $\nabla^{\mu} I=0$, we have $\omega^{\nabla^{\mu}}_{2}=\omega^{\nabla^{\mu}}_{3}=0$, and hence 
$\Omega^{\nabla^{\mu}}_{2}=\Omega^{\nabla^{\mu}}_{3}=0$.   

\begin{lemma}
The Ricci tensor $Ric^{\nabla^{\mu}}$ satisfies that 
\[ Ric^{\nabla^{\mu}}(X,Y)=Ric^{\nabla^{\mu}}(I_{1}X,I_{1}Y)
=-Ric^{\nabla^{\mu}}(I_{2}X,I_{2}Y)=-Ric^{\nabla^{\mu}}(I_{3}X,I_{3}Y)
\]
for $X$, $Y \in TM$.
\end{lemma}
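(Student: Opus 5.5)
The plan is to read off the symmetries of $Ric^{\nabla^{\mu}}$ from the vanishing of the curvature two-forms $\Omega^{\nabla^{\mu}}_{2}$ and $\Omega^{\nabla^{\mu}}_{3}$, which was observed just before the statement. The link between the two is the tensor $B^{\nabla^{\mu}}$.

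\emph{Step 1: the symmetries of $B$.} By Proposition \ref{integrable}, $Ric^{\nabla^{\mu}}$ is symmetric. Hence $(Ric^{\nabla^{\mu}})^{a}=0$, and
\[ B:=B^{\nabla^{\mu}}=\tfrac{1}{4n}Ric^{\nabla^{\mu}}-\tfrac{1}{2n(n+2)}\Pi_{h}(Ric^{\nabla^{\mu}}), \]
which is a symmetric tensor. Recall that $\Omega^{\nabla}_{\alpha}(X,Y)=2(B(X,I_{\alpha}Y)-B(Y,I_{\alpha}X))$. So $\Omega^{\nabla^{\mu}}_{2}=0$ gives $B(X,I_{2}Y)=B(Y,I_{2}X)$ for all $X,Y$. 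Replacing $Y$ by $I_{2}Y$ and using the symmetry of $B$, we get $B(I_{2}X,I_{2}Y)=-B(X,Y)$. The same argument with $\Omega^{\nabla^{\mu}}_{3}=0$ gives $B(I_{3}X,I_{3}Y)=-B(X,Y)$. Writing $I_{1}=I_{2}I_{3}$ and applying these two identities in turn yields $B(I_{1}X,I_{1}Y)=B(X,Y)$.

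\emph{Step 2: killing the $\Pi_{h}$-part.} By Step 1,
\[ \Pi_{h}(B)=\tfrac{1}{4}(B+B-B-B)=0. \]
Now $\Pi_{h}$ is a linear projection, so $\Pi_{h}\circ\Pi_{h}=\Pi_{h}$. Applying $\Pi_{h}$ to the formula for $B$ gives
\[ 0=\Pi_{h}(B)=\Bigl(\tfrac{1}{4n}-\tfrac{1}{2n(n+2)}\Bigr)\Pi_{h}(Ric^{\nabla^{\mu}})=\tfrac{1}{4(n+2)}\Pi_{h}(Ric^{\nabla^{\mu}}). \]
Hence $\Pi_{h}(Ric^{\nabla^{\mu}})=0$, so $B=\frac{1}{4n}Ric^{\nabla^{\mu}}$.

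\emph{Step 3: conclusion.} Since $Ric^{\nabla^{\mu}}=4nB$, the three identities from Step 1 transfer directly to $Ric^{\nabla^{\mu}}$:
\[ Ric^{\nabla^{\mu}}(X,Y)=Ric^{\nabla^{\mu}}(I_{1}X,I_{1}Y)=-Ric^{\nabla^{\mu}}(I_{2}X,I_{2}Y)=-Ric^{\nabla^{\mu}}(I_{3}X,I_{3}Y). \]

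The only delicate points are two. First, the sign conventions in the substitution of Step 1 must be tracked carefully. Second, one must check that $\Pi_{h}$ really is idempotent on $(0,2)$-tensors, which holds because the map $\theta\mapsto\theta(I_{\alpha}\cdot,I_{\alpha}\cdot)$ generates a group action whose average is $\Pi_{h}$. Everything else is bookkeeping.
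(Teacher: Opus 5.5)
Your proof is correct and uses the same ingredients as the paper: $\Omega^{\nabla^{\mu}}_{2}=\Omega^{\nabla^{\mu}}_{3}=0$, the symmetry of $Ric^{\nabla^{\mu}}$, and the formula expressing $\Omega_{\alpha}$ through $B^{\nabla^{\mu}}$. The difference is only in the bookkeeping. The paper substitutes the definition of $B^{\nabla^{\mu}}$ into $B(X,Y)=-B(I_{2}X,I_{2}Y)$ and solves the resulting linear relations directly for $Ric^{\nabla^{\mu}}$, obtaining Corollary \ref{cor_42} afterwards; you instead derive all the symmetries for $B^{\nabla^{\mu}}$ first and use the idempotence of $\Pi_{h}$ to get $\Pi_{h}Ric^{\nabla^{\mu}}=0$ (i.e.\ the corollary) before transferring the symmetries to $Ric^{\nabla^{\mu}}$.
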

\begin{proof}
Since $\Omega^{\nabla^{\mu}}_{2}=0$ and $Ric^{\nabla^{\mu}}$ is symmetric, it holds that 
$B^{\nabla^{\mu}}(X,Y)=-B^{\nabla^{\mu}}(I_{2}X,I_{2}Y)$
for any $X$, $Y \in TM$. 
Then we have
\begin{align*}
\frac{1}{4n}Ric^{\nabla^{\mu}}(X,Y)
&-\frac{1}{2n(n+2)} \Pi_{h} Ric^{\nabla^{\mu}}(X,Y) \\
&=-\frac{1}{4n}Ric^{\nabla^{\mu}}(I_{2}X,I_{2}Y)
+\frac{1}{2n(n+2)} \Pi_{h} Ric^{\nabla^{\mu}}(I_{2}X,I_{2}Y) \\
&=-\frac{1}{4n}Ric^{\nabla^{\mu}}(I_{2}X,I_{2}Y)
+\frac{1}{2n(n+2)} \Pi_{h} Ric^{\nabla^{\mu}} (X,Y).
\end{align*}
This means that 
\begin{align}\label{a111}
Ric^{\nabla^{\mu}}(X,Y)+Ric^{\nabla^{\mu}}(I_{2}X,I_{2}Y)
=\frac{2}{(n+2)} \Pi_{h} Ric^{\nabla^{\mu}}(X,Y).
\end{align}
Similarly, we have  
\begin{align}\label{a112}
Ric^{\nabla^{\mu}}(X,Y)+Ric^{\nabla^{\mu}}(I_{3}X,I_{3}Y)
=\frac{2}{(n+2)} \Pi_{h} Ric^{\nabla^{\mu}}(X,Y).
\end{align}
Combining these two equations \eqref{a111} and \eqref{a112}, it holds that 
\begin{align*}
Ric^{\nabla^{\mu}}(I_{2}X,I_{2}Y)=Ric^{\nabla^{\mu}}(I_{3}X,I_{3}Y), 
\end{align*}
which also implies that 
\begin{align*}
Ric^{\nabla^{\mu}}(X,Y)=Ric^{\nabla^{\mu}}(I_{1}X,I_{1}Y). 
\end{align*}
Again by \eqref{a111}, we see that 
\begin{align*}
Ric^{\nabla^{\mu}}(X,Y)+Ric^{\nabla^{\mu}}(I_{2}X,I_{2}Y)
=\frac{2}{(n+2)} \cdot \frac{1}{4} \left( 2Ric^{\nabla^{\mu}}(X,Y)+2Ric^{\nabla^{\mu}}(I_{2}X,I_{2}Y) \right).
\end{align*}
Therefore, we have $Ric^{\nabla^{\mu}}(X,Y)=-Ric^{\nabla^{\mu}}(I_{2}X,I_{2}Y)$. 
\end{proof}

\begin{corollary}\label{cor_42}
$\Pi_{h} Ric^{\nabla^{\mu}}=0$ and $B^{\nabla^{\mu}}=\frac{1}{4n} Ric^{\nabla^{\mu}}$.
\end{corollary}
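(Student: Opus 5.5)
This follows directly from the preceding lemma together with the definitions of $\Pi_{h}$ and $B^{\nabla}$. We work with the $\mu$-connection $\nabla^{\mu}$, whose Ricci tensor is symmetric by Proposition \ref{integrable}. Hence $(Ric^{\nabla^{\mu}})^{a}=0$ and $(Ric^{\nabla^{\mu}})^{s}=Ric^{\nabla^{\mu}}$. By the preceding lemma, relative to an admissible frame $(I_{1}=I,I_{2},I_{3})$ with $I$ induced by $\mu$,
\[ Ric^{\nabla^{\mu}}(I_{1}X,I_{1}Y)=Ric^{\nabla^{\mu}}(X,Y), \quad Ric^{\nabla^{\mu}}(I_{2}X,I_{2}Y)=Ric^{\nabla^{\mu}}(I_{3}X,I_{3}Y)=-Ric^{\nabla^{\mu}}(X,Y). \]

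Substituting these into the definition of $\Pi_{h}$ gives
\[ \Pi_{h}(Ric^{\nabla^{\mu}})(X,Y)=\frac{1}{4}\bigl(1+1-1-1\bigr)Ric^{\nabla^{\mu}}(X,Y)=0. \]
This value does not depend on the choice of admissible frame, because $\Pi_{h}$ is frame independent; it is invariant under $\mathrm{SO}(3)$ changes of frame.

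For the second claim, insert $(Ric^{\nabla^{\mu}})^{a}=0$ and $\Pi_{h}(Ric^{\nabla^{\mu}})^{s}=0$ into the defining formula of $B^{\nabla}$. Only the middle term survives, giving $B^{\nabla^{\mu}}=\frac{1}{4n}Ric^{\nabla^{\mu}}$.

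No step is a genuine obstacle. The only point needing care is that the lemma's identities are stated for a frame with $I_{1}=I$. Since the conclusion $\Pi_{h}Ric=0$ is frame independent, computing it in this particular frame suffices.
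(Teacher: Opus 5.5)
Your proof is correct and is essentially the paper's argument: the paper records this as an immediate consequence of the preceding lemma. Substituting $Ric^{\nabla^{\mu}}(I_{1}X,I_{1}Y)=Ric^{\nabla^{\mu}}(X,Y)$ and $Ric^{\nabla^{\mu}}(I_{2}X,I_{2}Y)=Ric^{\nabla^{\mu}}(I_{3}X,I_{3}Y)=-Ric^{\nabla^{\mu}}(X,Y)$ into $\Pi_{h}$ gives zero, and the symmetry of $Ric^{\nabla^{\mu}}$ (Proposition~\ref{integrable}) then leaves only the middle term of $B^{\nabla^{\mu}}$. Your remark on the $\mathrm{SO}(3)$-invariance of $\Pi_{h}$ is a harmless extra check that the paper leaves implicit.
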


We set $Ric^{\nabla^{\mu}}_{I_{1}}(X,Y):=Ric^{\nabla^{\mu}}(X,I_{1}Y)$ for $X$, $Y \in TM$.

\begin{lemma}
The non-vanishing coefficient $\Omega^{\nabla^{\mu}}_{1}$ is given by 
\begin{align*}
\Omega^{\nabla^{\mu}}_{1}=\frac{1}{n} Ric^{\nabla^{\mu}}_{I_{1}}. 
\end{align*}
\end{lemma}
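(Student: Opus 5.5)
The plan is to read off $\Omega^{\nabla^{\mu}}_{1}$ from the general formula
\[ \Omega^{\nabla}_{\alpha}(X,Y)=2\bigl(B^{\nabla}(X,I_{\alpha}Y)-B^{\nabla}(Y,I_{\alpha}X)\bigr) \]
recalled at the beginning of this subsection, and to substitute $B^{\nabla^{\mu}}=\frac{1}{4n}Ric^{\nabla^{\mu}}$ from Corollary \ref{cor_42}. No new curvature computation is needed; the argument only combines the symmetry properties of $Ric^{\nabla^{\mu}}$ established in the preceding lemma.

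Concretely, for $X,Y\in TM$ and $\alpha=1$ we will have
\[ \Omega^{\nabla^{\mu}}_{1}(X,Y)=\frac{1}{2n}\bigl(Ric^{\nabla^{\mu}}(X,I_{1}Y)-Ric^{\nabla^{\mu}}(Y,I_{1}X)\bigr). \]
Since $Ric^{\nabla^{\mu}}$ is symmetric (Proposition \ref{integrable}) and $I_{1}$-invariant by the preceding lemma, we can rewrite the second term:
\[ Ric^{\nabla^{\mu}}(Y,I_{1}X)=Ric^{\nabla^{\mu}}(I_{1}Y,I_{1}^{2}X)=-Ric^{\nabla^{\mu}}(I_{1}Y,X)=-Ric^{\nabla^{\mu}}(X,I_{1}Y). \]
Hence the bracket equals $2Ric^{\nabla^{\mu}}(X,I_{1}Y)=2Ric^{\nabla^{\mu}}_{I_{1}}(X,Y)$, which gives $\Omega^{\nabla^{\mu}}_{1}=\frac{1}{n}Ric^{\nabla^{\mu}}_{I_{1}}$. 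This also shows that $Ric^{\nabla^{\mu}}_{I_{1}}$ is skew, consistent with $\Omega_{1}$ being a two-form.

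As a consistency check, the same computation with $\alpha=2$ should give $\Omega^{\nabla^{\mu}}_{2}=0$. Using $Ric^{\nabla^{\mu}}(I_{2}\cdot,I_{2}\cdot)=-Ric^{\nabla^{\mu}}$, we get $Ric^{\nabla^{\mu}}(Y,I_{2}X)=Ric^{\nabla^{\mu}}(X,I_{2}Y)$, so the bracket cancels, in agreement with $\omega^{\nabla^{\mu}}_{2}=\omega^{\nabla^{\mu}}_{3}=0$. The only point needing care is the sign and normalization convention in the formula for $\Omega_{\alpha}$ taken from \cite{AM1}. If that formula carries a different factor, the constant $\frac{1}{n}$ changes accordingly, so I would double-check it against $[R_{X,Y},I_{\alpha}]$ using Corollary \ref{cor_42}.
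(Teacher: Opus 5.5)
Your proposal is correct and follows the paper's argument: substitute $B^{\nabla^{\mu}}=\frac{1}{4n}Ric^{\nabla^{\mu}}$ from Corollary \ref{cor_42} into $\Omega^{\nabla}_{1}(X,Y)=2\bigl(B^{\nabla}(X,I_{1}Y)-B^{\nabla}(Y,I_{1}X)\bigr)$, then combine the two terms using the symmetry and $I_{1}$-invariance of $Ric^{\nabla^{\mu}}$. Your explicit step $Ric^{\nabla^{\mu}}(Y,I_{1}X)=-Ric^{\nabla^{\mu}}(X,I_{1}Y)$ gets the sign right; the paper's displayed intermediate line has $-\frac{1}{4n}Ric^{\nabla^{\mu}}(I_{1}Y,X)$ where a $+$ is meant, but it reaches the same $\frac{1}{n}Ric^{\nabla^{\mu}}_{I_{1}}$.
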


\begin{proof}
By corollary \ref{cor_42}, we have 
\begin{align*}
\Omega^{\nabla^{\mu}}_{1}(X,Y) 
&=2(B^{\nabla^{\mu}}(X,I_{1}Y)-B^{\nabla^{\mu}}(Y,I_{1}X)) \\
&=2 \left( \frac{1}{4n} Ric^{\nabla^{\mu}} (X,I_{1}Y)- \frac{1}{4n} Ric^{\nabla^{\mu}} (I_{1}Y,X) \right) \\
&=\frac{1}{n} Ric^{\nabla^{\mu}} (X,I_{1}Y)=\frac{1}{n} Ric^{\nabla^{\mu}}_{I_{1}}(X,Y)
\end{align*}
for all $X$, $Y \in TM$. 
\end{proof}

Moreover, we have 
\begin{align*}
\mathrm{Tr} (I_{1} R^{\nabla^{\mu}}_{X,Y})=-2Ric^{\nabla^{\mu}}_{I_{1}}(X,Y)
\end{align*}
since 
\[ \Omega^{\nabla^{\mu}}_{1}(X,Y) = -\frac{1}{2n} \mathrm{Tr} (I_{1} R^{\nabla^{\mu}}_{X,Y}). \]

The following proposition shows that 
the Ricci tensor $Ric^{\nabla^{\mu}}$ of the $\mu$-connection 
provides an obstruction to the existence of a local hypercomplex structure 
of the form $(I_{1}=I,I_{2},I_{3})$.

\begin{proposition}\label{local_obst}
Let $(M,Q)$ be a quaternionic complex manifold with a twistor function $\mu$, 
and let $I$ be the complex structure induced by $\mu$.  
If $(I_{1}=I,I_{2},I_{3})$ is a hypercomplex structure on an open set $U \subset M$, 
then $Ric^{\nabla^{\mu}}=0$ on $U$. 
In particular, if $Ric^{\nabla^{\mu}}_{x} \neq 0$ at a point $x \in M$, 
then any neighborhood $V$ of $x$ does not admit a hypercomplex structure
$(I_{1}=I,I_{2},I_{3})$ on $V$.
\end{proposition}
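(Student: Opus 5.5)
On $U$ the plan is to compare $\nabla^{\mu}$ with the Obata connection $\nabla^{Ob}$ of the hypercomplex structure $(I_{1}=I,I_{2},I_{3})$.

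First, $\nabla^{Ob}$ is a torsion-free connection on $U$ preserving each $I_{\alpha}$, so it is a quaternionic connection for $Q|_{U}$. It also satisfies $\nabla^{Ob}I=0$. The restriction $\nabla^{\mu}|_{U}$ is a quaternionic connection with $\nabla^{\mu}I=0$ as well. Lemma \ref{3_33}, applied on $U$, then gives $\nabla^{\mu}=\nabla^{Ob}$ on $U$.

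Second, since $\nabla^{Ob}I_{\alpha}=0$ for all $\alpha$, all connection forms $\omega_{\alpha}^{\nabla^{\mu}}$ in \eqref{conn_f} vanish with respect to the frame $(I_{1},I_{2},I_{3})$. Hence all curvature coefficients vanish:
\[
\Omega^{\nabla^{\mu}}_{\alpha}=d\omega^{\nabla^{\mu}}_{\alpha}+\omega^{\nabla^{\mu}}_{\beta}\wedge\omega^{\nabla^{\mu}}_{\gamma}=0 .
\]
Equivalently, $[R^{\nabla^{\mu}}_{X,Y},I_{\alpha}]=0$. In particular $\Omega^{\nabla^{\mu}}_{1}=0$ on $U$.

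Third, the lemma just proved gives $\Omega^{\nabla^{\mu}}_{1}=\frac{1}{n}Ric^{\nabla^{\mu}}_{I_{1}}$, so $Ric^{\nabla^{\mu}}(X,I_{1}Y)=0$ for all $X,Y$. Since $I_{1}$ is invertible, $Ric^{\nabla^{\mu}}=0$ on $U$. The "in particular" statement is the contrapositive, applied to $U=V$.

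The main point to check is the identification $\nabla^{\mu}=\nabla^{Ob}$. Lemma \ref{3_33} is stated globally, but its proof is pointwise: Lemma \ref{q_conn_eq} shows the difference of the two connections is $S^{\xi}$, and $[S^{\xi},I]=0$ forces $\xi=0$. So it applies verbatim on $U$.

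There is an alternative route that avoids the Obata connection. It uses the standard fact that the Obata connection has Ricci tensor with vanishing $Q$-trace relations, namely $B$ skew and $\Omega_{\alpha}=0$, together with the formula $\mathrm{Tr}(I_{1}R^{\nabla^{\mu}}_{X,Y})=-2Ric^{\nabla^{\mu}}_{I_{1}}(X,Y)$. I would use the first route, since it relies only on results already established.
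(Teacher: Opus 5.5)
Your proof is correct and follows the paper's argument. The key step, identifying $\nabla^{\mu}$ with the Obata connection on $U$ via Lemma \ref{3_33}, is exactly what the paper does. The only difference is the last step. The paper concludes because the Obata connection's Ricci tensor is skew-symmetric while $Ric^{\nabla^{\mu}}$ is symmetric. You instead combine $\Omega^{\nabla^{\mu}}_{1}=0$ (all $I_{\alpha}$ are parallel) with $\Omega^{\nabla^{\mu}}_{1}=\frac{1}{n}Ric^{\nabla^{\mu}}_{I_{1}}$, which has the small advantage of using only identities proved in the paper rather than the standard, but not proved there, skew-symmetry of the Obata Ricci tensor.
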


\begin{proof}
Let $\nabla^{0}$ be the Obata connection on $U$ associated with 
the hypercomplex structure $(I_{1}=I,I_{2},I_{3})$. 
Since $\nabla^{0}I=\nabla^{\mu}I=0$, 
Lemma \ref{3_33} implies that $\nabla^{0}=\nabla^{\mu}$. 
Hence $Ric^{\nabla^{0}}=Ric^{\nabla^{\mu}}$. 
Because $Ric^{\nabla^{0}}$ is skew-symmetric whereas 
$Ric^{\nabla^{\mu}}$ is symmetric, 
it follows that $Ric^{\nabla^{\mu}}=0$ on $U$.
\end{proof}

To investigate the global existence of a hypercomplex structure 
of the form $(I_{1}=I,I_{2},I_{3})$, 
we make use of the first Chern class. 
For this purpose, we prove the following theorem.

\begin{theorem}\label{chern_mu_conn}
The first Chern class $c_{1}(M)$ of a quaternionic complex manifold $(M,I_{1})$ defined by twistor function 
$\mu$ is represented by the Ricci form of the $\mu$-connection. More precisely,
\begin{align*}
c_{1}(M)=\left[ -\frac{1}{2 \pi} Ric^{\nabla^{\mu}}_{I_{1}} \right]. 
\end{align*}
\end{theorem}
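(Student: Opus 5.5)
Since $\nabla^{\mu} I_{1}=0$ by Proposition \ref{integrable}, the connection $\nabla^{\mu}$ is a connection on the complex vector bundle $(TM,I_{1})$. Its curvature $R^{\nabla^{\mu}}_{X,Y}$ commutes with $I_{1}$, so it is a $\mathfrak{gl}(2n,\mathbb{C})$-valued two-form. I will apply Chern--Weil theory: $c_{1}(M)$ is represented by $\frac{i}{2\pi}\mathrm{tr}_{\mathbb{C}}R^{\nabla^{\mu}}$. The whole task is then to express this complex trace through the Ricci tensor, and the preceding computation $\mathrm{Tr}(I_{1}R^{\nabla^{\mu}}_{X,Y})=-2Ric^{\nabla^{\mu}}_{I_{1}}(X,Y)$ does exactly that.

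\textbf{Step 1 (trace conversion).} For a real endomorphism $A$ commuting with $I_{1}$, viewed as a complex-linear map $A_{\mathbb{C}}$ of $(T_xM,I_{1})$, we have
\[ \mathrm{tr}_{\mathbb{C}}A_{\mathbb{C}}=\tfrac12\mathrm{Tr}A-\tfrac{i}{2}\mathrm{Tr}(I_{1}A). \]
To check this, pick a complex basis $v_{1},\dots,v_{2n}$; the real basis $v_{j},I_{1}v_{j}$ gives $\mathrm{Tr}A=2\,\mathrm{Re}\,\mathrm{tr}_{\mathbb{C}}A_{\mathbb{C}}$ and $\mathrm{Tr}(I_{1}A)=-2\,\mathrm{Im}\,\mathrm{tr}_{\mathbb{C}}A_{\mathbb{C}}$.

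\textbf{Step 2 (real trace vanishes).} Since $\nabla^{\mu}\nu^{\mu}=0$, the induced connection on $\Lambda^{4n}T^{\ast}M$ is flat, so $\mathrm{Tr}R^{\nabla^{\mu}}_{X,Y}=0$. This is consistent with the symmetry of $Ric^{\nabla^{\mu}}$: for a torsion-free connection, the Bianchi identity gives $\mathrm{Tr}R_{X,Y}=Ric(X,Y)-Ric(Y,X)$ up to sign. Hence
\[ \mathrm{tr}_{\mathbb{C}}R^{\nabla^{\mu}}_{X,Y}=-\tfrac{i}{2}\mathrm{Tr}(I_{1}R^{\nabla^{\mu}}_{X,Y})=i\,Ric^{\nabla^{\mu}}_{I_{1}}(X,Y), \]
where the last equality uses the identity displayed before Proposition \ref{local_obst}.

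\textbf{Step 3 (conclude).} Substituting into Chern--Weil gives $c_{1}(M)=\left[\frac{i}{2\pi}\cdot i\,Ric^{\nabla^{\mu}}_{I_{1}}\right]=\left[-\frac{1}{2\pi}Ric^{\nabla^{\mu}}_{I_{1}}\right]$, as claimed. I will also note that $Ric^{\nabla^{\mu}}_{I_{1}}$ is a genuine closed two-form. It is skew because $Ric^{\nabla^{\mu}}$ is symmetric and $I_{1}$-invariant, so $Ric(Y,I_{1}X)=-Ric(I_{1}Y,X)$. It is closed because it is proportional to $\mathrm{Tr}(I_{1}R)$, which is the curvature of the induced connection on $\Lambda^{2n,0}$, or alternatively because $\Omega^{\nabla^{\mu}}_{1}=d\omega_{1}$ once $\omega_{2}=\omega_{3}=0$.

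The main obstacle is sign and normalization bookkeeping: the convention $\frac{i}{2\pi}$ versus $-\frac{1}{2\pi i}$ for curvature, the sign of the Ricci contraction, and the sign in $\Omega_{1}=-\frac{1}{2n}\mathrm{Tr}(I_{1}R)$. I would fix these by testing on $\mathbb{C}P^{n}\subset\mathbb{H}P^{n}$, or on a flat model, checking positivity against a known Kähler case.
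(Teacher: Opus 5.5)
Your proposal is correct and follows essentially the paper's route: the paper derives $\mathrm{Tr}(I_{1}R^{\nabla^{\mu}}_{X,Y})=-2Ric^{\nabla^{\mu}}_{I_{1}}(X,Y)$ from $\Omega^{\nabla^{\mu}}_{1}=\frac{1}{n}Ric^{\nabla^{\mu}}_{I_{1}}$ and $\Omega^{\nabla^{\mu}}_{1}=-\frac{1}{2n}\mathrm{Tr}(I_{1}R^{\nabla^{\mu}})$, and the theorem is then the Chern--Weil computation for $\nabla^{\mu}$ on $(TM,I_{1})$, which is exactly your Steps 1--3 with the real trace removed by $\nabla^{\mu}\nu^{\mu}=0$. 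Your signs are already consistent: for a K\"ahler metric the formula reduces to the familiar $c_{1}=[\rho/2\pi]$ with $\rho(X,Y)=Ric(I_{1}X,Y)$, so the extra normalization test you mention is not needed.
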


The {\cmssl characteristic $4$-form $\Theta^{\nabla}$} associated 
with the quaternionic connection $\nabla$ is defined by 
\begin{align*}
\Theta^{\nabla}=\sum_{\alpha=1}^{3} \Omega^{\nabla}_{\alpha} \wedge \Omega^{\nabla}_{\alpha}
\end{align*}
as in \cite{AM1}. 
Note that $d \Theta^{\nabla}=0$ (see \cite[Proposition 5.2]{AM1}). 
In particular, if the principal $\mathrm{SO}(3)$-bundle $S(M)$ associated with $Q$ which is the bundle of 
admissible frames is trivial, 
then $\Theta^{\nabla}$ is exact 
for any quaternionic connection $\nabla$ 
due to \cite[Propositions 5.2 and 5.3]{AM1}.

\begin{proposition}\label{a113}
For a quaternionic complex manifold $M$, the characteristic $4$-form $\Theta^{\nabla^{\mu}}$ satisfies  
$4  \pi^{2} c_{1}(M)^{2}=n^{2}[\Theta^{\nabla^{\mu}}]$.
\end{proposition}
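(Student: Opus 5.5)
The plan is to compute $\Theta^{\nabla^{\mu}}$ pointwise using the special form of the curvature coefficients of the $\mu$-connection, and then pass to cohomology using Theorem \ref{chern_mu_conn}.

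Choose a local admissible frame $(I_{1}=I,I_{2},I_{3})$, where $I$ is the complex structure induced by $\mu$. Since $\nabla^{\mu}I=0$, we have $\omega^{\nabla^{\mu}}_{2}=\omega^{\nabla^{\mu}}_{3}=0$, and hence $\Omega^{\nabla^{\mu}}_{2}=\Omega^{\nabla^{\mu}}_{3}=0$, as noted at the beginning of Section \ref{sec_mu_conn}. The characteristic $4$-form therefore has a single nonzero summand,
\begin{align*}
\Theta^{\nabla^{\mu}}=\Omega^{\nabla^{\mu}}_{1}\wedge\Omega^{\nabla^{\mu}}_{1}.
\end{align*}
This summand is globally well defined: $I_{1}=I$ is global, and $\Omega_{1}$ is unchanged under rotations of $(I_{2},I_{3})$, since $\Omega^{\nabla^{\mu}}_{1}=d\omega^{\nabla^{\mu}}_{1}$ because the wedge term vanishes. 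By the preceding lemma, $\Omega^{\nabla^{\mu}}_{1}=\frac{1}{n}Ric^{\nabla^{\mu}}_{I_{1}}$. It follows that
\begin{align*}
n^{2}\,\Theta^{\nabla^{\mu}}=Ric^{\nabla^{\mu}}_{I_{1}}\wedge Ric^{\nabla^{\mu}}_{I_{1}}
\end{align*}
as forms.

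Next we invoke Theorem \ref{chern_mu_conn}, which gives $c_{1}(M)=[-\frac{1}{2\pi}Ric^{\nabla^{\mu}}_{I_{1}}]$. In particular $Ric^{\nabla^{\mu}}_{I_{1}}$ is a closed $2$-form; this is also visible from $\Omega_{1}=d\omega_{1}$ locally. Because the cup product in de Rham cohomology is induced by the wedge product of closed representatives,
\begin{align*}
4\pi^{2}c_{1}(M)^{2}=4\pi^{2}\cdot\frac{1}{4\pi^{2}}\,[Ric^{\nabla^{\mu}}_{I_{1}}\wedge Ric^{\nabla^{\mu}}_{I_{1}}]=n^{2}[\Theta^{\nabla^{\mu}}],
\end{align*}
which is the claim.

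The only delicate point is bookkeeping: we must check that $\Omega_{1}$ here is exactly the coefficient entering $\Theta$, with the same normalization as in the formula $\Omega_{\alpha}=2(B(X,I_{\alpha}Y)-B(Y,I_{\alpha}X))$ used to derive $\Omega_{1}=\frac{1}{n}Ric_{I_{1}}$. Both come from \cite{AM1} with the same conventions, so the constants should match. Once that is confirmed, the result follows directly from the cited theorem.
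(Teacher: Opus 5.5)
Your argument is correct and is exactly the derivation the paper intends. The paper states the proposition without a written proof, but it sets up precisely the ingredients you use: $\Omega^{\nabla^{\mu}}_{2}=\Omega^{\nabla^{\mu}}_{3}=0$ reduces $\Theta^{\nabla^{\mu}}$ to $\Omega^{\nabla^{\mu}}_{1}\wedge\Omega^{\nabla^{\mu}}_{1}$, the lemma $\Omega^{\nabla^{\mu}}_{1}=\frac{1}{n}Ric^{\nabla^{\mu}}_{I_{1}}$ turns this into $\frac{1}{n^{2}}Ric^{\nabla^{\mu}}_{I_{1}}\wedge Ric^{\nabla^{\mu}}_{I_{1}}$, and Theorem \ref{chern_mu_conn} finishes. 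Your normalization worry is unfounded, since the $\Omega_{\alpha}$ in $\Theta^{\nabla}$ and in that lemma are the same coefficients, defined by $[R^{\nabla}_{X,Y},I_{\alpha}]=\Omega^{\nabla}_{\gamma}(X,Y)I_{\beta}-\Omega^{\nabla}_{\beta}(X,Y)I_{\gamma}$.
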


\subsection{Proof of the main theorem}

The following lemma is easy to prove. 

\begin{lemma}\label{hyp_chern}
Let $(M,(I_{1},I_{2},I_{3}))$ be a hypercomplex manifold. The 
the first Chern class of $(M,I_{\alpha})$ vanishes for $\alpha=1,2,3$. 
\end{lemma}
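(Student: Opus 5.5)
The plan is to use a torsion-free connection that preserves $I_{\alpha}$ and to compute the Chern class from the trace of its curvature on $(TM,I_{\alpha})$. Take the Obata connection $\nabla^{0}$ of $(I_{1},I_{2},I_{3})$. It is torsion-free and satisfies $\nabla^{0}I_{\beta}=0$ for $\beta=1,2,3$. In particular $\nabla^{0}$ is a complex-linear connection on the complex vector bundle $(TM,I_{\alpha})$. By Chern--Weil theory, $c_{1}(M,I_{\alpha})$ is represented by $\frac{\sqrt{-1}}{2\pi}\mathrm{tr}_{\mathbb{C}}R^{\nabla^{0}}$. Here $\mathrm{tr}_{\mathbb{C}}A=\frac12(\mathrm{Tr}A-\sqrt{-1}\,\mathrm{Tr}(I_{\alpha}A))$ for an $I_{\alpha}$-linear endomorphism $A$.

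The key steps are as follows. First, the curvature $R^{\nabla^{0}}_{X,Y}$ commutes with all three $I_{\beta}$, since these are parallel. Hence $R^{\nabla^{0}}_{X,Y}$ takes values in $\mathfrak{gl}(n,\mathbb{H})$. Second, take any $A\in\mathfrak{gl}(n,\mathbb{H})$, viewed as a complex-linear endomorphism of $(TM,I_{\alpha})$. Choose $\beta\neq\alpha$. Then $I_{\beta}$ anticommutes with $I_{\alpha}$ and commutes with $A$. This gives
\[ \mathrm{Tr}(I_{\alpha}A)=\mathrm{Tr}(I_{\beta}^{-1}I_{\alpha}AI_{\beta})=-\mathrm{Tr}(I_{\alpha}A), \]
so $\mathrm{Tr}(I_{\alpha}A)=0$. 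Therefore $\mathrm{tr}_{\mathbb{C}}R^{\nabla^{0}}_{X,Y}=\frac12\mathrm{Tr}R^{\nabla^{0}}_{X,Y}$, which is a real $2$-form.

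Third, I need to kill the real part. $\mathrm{Tr}R^{\nabla^{0}}$ is the curvature of the connection induced by $\nabla^{0}$ on the real line bundle $L=\Lambda^{4n}T^{\ast}M$, up to sign. So $\mathrm{Tr}R^{\nabla^{0}}=-d\theta_{L}$ locally, and the class is controlled by a real line bundle connection. For the Chern class this is enough. $c_{1}$ is a real cohomology class represented by $\frac{\sqrt{-1}}{2\pi}\mathrm{tr}_{\mathbb{C}}R$, and this form must be real. We have just shown it is $\sqrt{-1}$ times a real form, so it vanishes. Note that $\mathrm{tr}_{\mathbb{C}}R$ of a connection on a complex bundle is a priori complex-valued. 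Chern--Weil gives the real class only after taking real parts, and its imaginary part is exact. Concretely, the imaginary part $\frac{1}{4\pi}\mathrm{Tr}R^{\nabla^{0}}$ equals $-\frac{1}{4\pi}d\theta_{L}$ for a global $\theta_{L}$. One gets a global $\theta_{L}$ by choosing a global density, or more simply by using the density bundle $|L|$, which is trivial. Hence the representative of $c_{1}$ is exact, and in fact zero at the form level, so $c_{1}(M,I_{\alpha})=0$ in real cohomology.

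The main obstacle is this last bookkeeping about real versus imaginary parts. The cleanest route is to avoid it: replace $\nabla^{0}$ by $\nabla^{0}+S^{\xi}$-type adjustments if needed. Better, observe directly that $\mathrm{tr}_{\mathbb{C}}R^{\nabla^{0}}=\frac12\mathrm{Tr}R^{\nabla^{0}}=-\frac12 d\theta_{|L|}$, where $\theta_{|L|}$ is the connection form with respect to a global positive density. This form is globally exact. Therefore $c_{1}(M,I_{\alpha})=\bigl[\frac{\sqrt{-1}}{2\pi}\mathrm{tr}_{\mathbb{C}}R^{\nabla^{0}}\bigr]=0$ for each $\alpha$.
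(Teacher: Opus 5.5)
Your proof is correct and is essentially the paper's. The paper also takes the Obata connection and deduces $\mathrm{Tr}(I_{\alpha}R^{\nabla^{0}})=0$ from $[R^{\nabla^{0}},I_{\alpha}]=0$, via $\Omega_{\alpha}=0$ and $\Omega_{\alpha}=-\frac{1}{2n}\mathrm{Tr}(I_{\alpha}R)$, which is your conjugation-by-$I_{\beta}$ step in another form. Your density-bundle argument that $\mathrm{Tr}R^{\nabla^{0}}$ is globally exact spells out why $c_{1}=[\frac{1}{4\pi}\mathrm{Tr}(I_{\alpha}R)]$, which the paper leaves implicit.

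Do delete the intermediate claims that the Chern--Weil form ``must be real'' and therefore ``vanishes''. $\mathrm{Tr}R^{\nabla^{0}}$ is proportional to $(Ric^{\nabla^{0}})^{a}$ and need not vanish; only its exactness is needed. Also delete the $\nabla^{0}+S^{\xi}$ aside, since for $\xi\neq 0$ it would destroy $\nabla I_{\alpha}=0$.
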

\begin{proof}
Let $\nabla^{0}$ be the Obata connection. We have $[R^{\nabla^{0}},I_{\alpha}]=0$, and hence 
$\Omega_{\alpha}=0$ for $\alpha=1,2,3$. Therefore we see that $\mathrm{Tr} I_{\alpha} R^{\nabla^{0}}_{X,Y}=0$.  
\end{proof}

If $I$ is a compatible complex structure of closed type on a quaternionic manifold, then the Ricci tensor 
$Ric^{\nabla^{I}}$ of the quaternionic connection 
$\nabla^{I}$ given in Proposition \ref{th_2a} satisfies the same properties as those of $\mu$-connection. 
Precisely, we have 

\begin{lemma}\label{Ric_Iconn}
Let $I$ be a compatible complex structure of closed type. 
The Ricci tensor $Ric^{\nabla^{I}}$ is symmetric and
\begin{align*}
Ric^{\nabla^{I}}(X,Y)=Ric^{\nabla^{I}}(I_{1}X,I_{1}Y)
=-Ric^{\nabla^{I}}(I_{2}X,I_{2}Y)=-Ric^{\nabla^{I}}(I_{3}X,I_{3}Y)
\end{align*}
for any $X$, $Y \in TM$, where $(I_{1}=I,I_{2},I_{3})$ is an admissible frame. Hence we have
$\Pi_{h} Ric^{\nabla^{I}}=0$ and $B^{\nabla^{I}}=\frac{1}{4n} Ric^{\nabla^{I}}$. 
In particular, the similar statement as Proposition \ref{local_obst} holds for $\nabla^{I}$. 
Moreover, the first Chern class $c_{1}(M)$ of $(M,I)$ is given by
\begin{align*}
c_{1}(M)=\left[ -\frac{1}{2 \pi} Ric^{\nabla^{I}}_{I} \right]. 
\end{align*}
\end{lemma}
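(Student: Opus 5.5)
The plan is to reduce everything to the local $\mu$-connection statements already proved, using Proposition \ref{th_2a}. By closed type, Proposition \ref{th_2a} gives a unique quaternionic connection $\nabla^{I}$ with $\nabla^{I}I=0$. It also gives an open cover $\{U\}$ of $M$ together with nowhere vanishing twistor functions $\mu^{U}=f^{U}I\otimes e$ on each $U$, such that $\nabla^{I}|_{U}=\nabla^{\mu^{U}}$. (Proposition \ref{th_2a} assumed an $S^{1}$-action, but the construction never uses it; alternatively, Lemma \ref{3_33} shows directly that $\nabla^{I}|_U=\nabla^{\mu^U}$, since both preserve $I$.) Every assertion in the lemma except the Chern class statement is pointwise or local. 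So on each $U$ we may apply the results of Section \ref{sec_mu_conn} to the quaternionic complex manifold $(U,Q|_U,\mu^U)$.

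First, $Ric^{\nabla^{\mu^U}}$ is symmetric by Proposition \ref{integrable}. The preceding lemma gives the $I_1$-invariance and the $I_2,I_3$-anti-invariance, and Corollary \ref{cor_42} gives $\Pi_h Ric=0$ and $B=\frac{1}{4n}Ric$. These identities transfer verbatim to $\nabla^{I}$ on each $U$, and hence globally. Here one should check that the admissible frame $(I_1=I,I_2,I_3)$ is arbitrary; this is fine, because the identities for $I_2,I_3$ were proved for any completion of $I$ to an admissible frame. The analogue of Proposition \ref{local_obst} then follows by the same argument, with $\nabla^{\mu}$ replaced by $\nabla^{I}$. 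Any hypercomplex structure $(I,I_2,I_3)$ on an open set $V$ has an Obata connection preserving $I$, so by Lemma \ref{3_33} that connection equals $\nabla^{I}|_V$. Its Ricci tensor is therefore both skew and symmetric, hence zero.

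For the Chern class, the plan is to repeat the proof of Theorem \ref{chern_mu_conn} globally rather than patch local classes. Since $\nabla^{I}I=0$ and $\nabla^I$ is torsion-free, $\nabla^{I}$ is a connection on the complex vector bundle $(TM,I)$. Chern–Weil then gives $c_1(M)=\left[\frac{\sqrt{-1}}{2\pi}\mathrm{tr}_{\mathbb C}R^{\nabla^{I}}\right]$. We have $\mathrm{tr}_{\mathbb C}A=\frac12(\mathrm{Tr}A-\sqrt{-1}\,\mathrm{Tr}(IA))$ for $I$-linear $A$, and $\mathrm{Tr}R^{\nabla^I}_{X,Y}$ is proportional to the skew part of the Ricci tensor, which vanishes. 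So $c_1$ is represented by $\frac{1}{4\pi}\mathrm{Tr}(I R^{\nabla^I})$. The identity $\mathrm{Tr}(I_1R_{X,Y})=-2Ric_{I_1}(X,Y)$ holds on each $U$, where $\nabla^{I}=\nabla^{\mu^U}$, so it holds globally, and substituting gives $c_1(M)=[-\frac{1}{2\pi}Ric^{\nabla^I}_I]$.

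The main obstacle is the bookkeeping of signs and normalizations in this last step: the convention for $\mathrm{tr}_{\mathbb C}$ versus the real trace, and the sign relating $\Omega_1$ to $\mathrm{Tr}(I_1R)$. These must match exactly the convention used in Theorem \ref{chern_mu_conn}. I would simply cite that theorem's computation, noting that its proof is pointwise in the curvature and uses only $\nabla I=0$ and the symmetry of the Ricci tensor. Both facts hold for $\nabla^{I}$ globally, so the representing form is a global closed form and no gluing argument is needed.
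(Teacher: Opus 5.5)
Your proposal is correct and follows the paper's argument, which the paper leaves implicit: $\nabla^{I}$ locally coincides with the $\mu^{U}$-connections of Proposition \ref{th_2a}, so the pointwise identities of the preceding lemma and Corollary \ref{cor_42} transfer, and the Chern class formula comes from the same Chern--Weil computation behind Theorem \ref{chern_mu_conn}, using only $\nabla^{I}I=0$, the symmetry of the Ricci tensor and $\mathrm{Tr}(I_{1}R)=-2Ric_{I_{1}}$. You are also right that the $S^{1}$-hypothesis in Proposition \ref{th_2a} is never used, and your normalization $c_{1}=[\frac{1}{4\pi}\mathrm{Tr}(IR)]$ is consistent with the paper's.
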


Now we prove the main theorem of this paper. 

\begin{theorem}\label{q_flat_fix}
Suppose that $(M,Q)$ is a quaternionic manifold of $\dim M=4n$ 
endowed with an $S^{1}$-quaternionic action, and 
that $F$ is a connected component of the fixed-point set of the $S^{1}$-action with $f_{Q} \neq 0$. 
Let $I$ be an $S^{1}$-invariant compatible complex structure of closed type. 
If the quaternionic Weyl curvature $W$ of $(M,Q)$ vanishes, then we obtain  
\[ 2n c_{1}(F) = (m+1) \iota^{\ast} c_{1}(M), \]
where $2m=\dim F$ and $\iota:F \to M$ denotes the inclusion.  
In particular, this yields the following obstructions
\begin{enumerate}
  \item if  $c_{1}(F) \neq 0$, then there exists no hypercomplex structure $(I_{1},I_{2},I_{3})$ 
  on any open set $U$ of $M$ such that $I_{1}=I$ and $F \subset U$. 
  \item if  $c_{1}(F)^{2} \neq 0$, then the ${\rm SO(3)}$-bundle $S(U)$ is not trivial 
  for any open set $U$ of $M$ such that $F \subset U$. In particular, if $c_{1}(F)^{2} \neq 0$, then 
  there exists no hypercomplex structure $H$ on a connected component of $M$ containing $F$ 
  such that $Q=\langle H \rangle$. 
\end{enumerate}
\end{theorem}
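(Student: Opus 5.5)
We work throughout with the connection $\nabla^{I}$ of Proposition \ref{th_2a}, which is the unique quaternionic connection with $\nabla^{I}I=0$. By Lemma \ref{inv_111} it is $S^{1}$-invariant, since $L_{X}I=0$. Hence Lemma \ref{fix_point} and Theorem \ref{thm_fixpts} apply to it: $F$ is a totally geodesic, transversally complex submanifold for $\nabla^{I}$.

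By Corollary \ref{ex_m_map}, applied locally to the twistor functions supplied by Proposition \ref{th_2a}, the complex structure $f_{Q}/\|f_{Q}\|$ on $F$ agrees with $\pm I|_{F}$. So $F$ is an $I$-complex submanifold, and the connection $\nabla^{F}$ induced by $\nabla^{I}$ on $TF$ is a torsion-free connection preserving $I|_{F}$. Consequently $c_{1}(F)=[-\frac{1}{2\pi}\cdot\frac{1}{2}\mathrm{Tr}_{\mathbb{R}}(I\circ R^{\nabla^{F}})]$, with the same normalization as in Theorem \ref{chern_mu_conn}. Because $F$ is totally geodesic, $R^{\nabla^{F}}_{Y,Z}=R^{\nabla^{I}}_{Y,Z}|_{TF}$ for $Y,Z\in TF$. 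The task therefore reduces to computing $\mathrm{Tr}(I\circ R^{\nabla^{I}}_{Y,Z}|_{TF})$ and comparing it with $\mathrm{Tr}(I\circ R^{\nabla^{I}}_{Y,Z})=-2Ric^{\nabla^{I}}_{I}(Y,Z)$ on all of $TM$, which represents $\iota^{\ast}c_{1}(M)$ by Lemma \ref{Ric_Iconn}.

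The key step uses the hypothesis $W=0$. Then $R^{\nabla^{I}}=R^{B}$ with $B=\frac{1}{4n}Ric^{\nabla^{I}}$ symmetric by Lemma \ref{Ric_Iconn}, so
\[
R_{Y,Z}=S^{B(Z,\cdot)}_{Y}-S^{B(Y,\cdot)}_{Z}.
\]
This is explicit, and I compute $\mathrm{Tr}(I\circ R_{Y,Z}|_{TF})$ term by term from the formula for $S^{\xi}$ in Lemma \ref{q_conn_eq}. Write $\xi=B(Z,\cdot)$ and take $Y\in TF$. The terms $\xi(\cdot)Y$ and $\xi(I_{1}\cdot)I_{1}Y$ map into $TF$ and contribute traces that are multiples of $\xi(I Y)$ restricted to $TF$. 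The terms $\xi(Y)W$ and $\xi(I_{1}Y)I_{1}W$ contribute multiples of the identity on $TF$, whose $I$-trace vanishes. The terms involving $I_{2},I_{3}$ require the decomposition $TM|_{F}\supset V=TF\oplus I_{2}TF$ from Remark \ref{rem_para}. The components $\xi(I_{2}W)I_{2}Y$ and $\xi(I_{2}Y)I_{2}W$ land in $I_{2}TF$, so their tangential projection onto $TF$ is zero and they do not contribute to the trace. One must check that the projection onto $TF$ along $I_{2}TF$ is the correct one; this is where Remark \ref{rem_para} and the total geodesy are used.

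Collecting terms, the trace over $TF$ should come out as a combination of $\mathrm{Tr}_{TF}$ of $B(Z,I\cdot)$ and $B(\cdot,I\,\cdot)$-type expressions, that is,
\[
c\,(m+1)\bigl(B(Y,IZ)-B(Z,IY)\bigr)
\]
for a universal constant $c$, by analogy with $\mathrm{Tr}S^{\xi}=4(n+1)\xi$, now with $m$ in place of $n$. The same computation on $TM$ gives
\[
c\,(n+1)\cdot(\text{same form})
\]
with the same constant $c$; alternatively one can use $\mathrm{Tr}(IR)=-2Ric_{I}$ directly. Dividing, and using that $B=\frac{1}{4n}Ric$, yields $2n\,c_{1}(F)=(m+1)\iota^{\ast}c_{1}(M)$. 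The \textbf{main obstacle} is this trace bookkeeping, specifically getting the $(m+1)$ versus $2n$ constants right, which includes a contribution of the form $\mathrm{Tr}_{TF}(\xi\otimes Y)$ that involves $B$ restricted to $TF$. I would first verify the constants on Example \ref{cpx_pr}: there $F=\mathbb{C}P^{n}$ with $m=n$, and $c_{1}(F)=(n+1)h$ must match.

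For the obstructions: for (1), a hypercomplex structure on $U\supset F$ with $I_{1}=I$ gives $c_{1}(U,I)=0$ by Lemma \ref{hyp_chern}, so $\iota^{\ast}c_{1}(M)=0$ and hence $c_{1}(F)=0$. For (2), squaring the identity and combining it with Proposition \ref{a113} in the form of Lemma \ref{Ric_Iconn} gives $\iota^{\ast}[\Theta^{\nabla^{I}}]\neq0$. If $S(U)$ were trivial, $\Theta$ would be exact on $U$ by \cite[Propositions 5.2, 5.3]{AM1}, a contradiction. A hypercomplex $H$ with $Q=\langle H\rangle$ trivializes $S$, so no such $H$ exists.
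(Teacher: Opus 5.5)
Your route is the paper's: $\nabla^{I}$, its $S^{1}$-invariance via Lemma~\ref{inv_111}, total geodesy of $F$, $R^{\nabla^{I}}=R^{B^{\nabla^{I}}}$ from $W=0$, projection along $I_{2}TF$ as in Remark~\ref{rem_para}, comparison of $I$-twisted traces, and the same arguments for the two obstructions. However, the one computation that carries the content of the theorem is asserted ``by analogy'', and the bookkeeping you do sketch is wrong exactly where it matters.

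In $S^{\xi}_{Y}W$ the term $-\xi(IY)IW$ is a multiple of $I$, not of the identity. Composed with $I$ it becomes $\xi(IY)\,\mathrm{id}_{TF}$, whose trace is $2m\,\xi(IY)$. This is the only source of $m$. If its $I$-trace vanished, as you claim, the remaining terms would give just $2\xi(IY)$, with no $m$ at all.

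The analogy with $\mathrm{Tr}\,S^{\xi}=4(n+1)\xi$ also misleads, since that is the untwisted trace. On $TM$ one has $\mathrm{Tr}(I\circ S^{\xi}_{Y})=4n\,\xi(IY)$. The four terms built from $\mathrm{id}$ and $I$ give $(4n+2)\xi(IY)$, and $-\xi(I_{2}W)I_{2}Y-\xi(I_{3}W)I_{3}Y$ contribute $-2\xi(IY)$. These last terms are precisely the ones the projection removes on $TF$. So ``the same constant $c$ with $n+1$ in place of $m+1$'' is false. Dividing the two expressions you wrote would give $(n+1)c_{1}(F)=(m+1)\iota^{\ast}c_{1}(M)$, not the theorem; the appeal to $B=\frac{1}{4n}Ric^{\nabla^{I}}$ does not change that ratio.

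The missing computation is short. For $Y,W\in TF$ (note $I_{3}TF=I_{2}TF$, since $TF$ is $I$-invariant):
\[
p(S^{\xi}_{Y}W)=\xi(Y)W+\xi(W)Y-\xi(IY)IW-\xi(IW)IY,\qquad \mathrm{Tr}_{TF}(I\circ p\circ S^{\xi}_{Y})=2(m+1)\,\xi(IY).
\]
Use $R_{Y,Z}=S^{B(Z,\cdot)}_{Y}-S^{B(Y,\cdot)}_{Z}$ and $B(Z,IY)=-B(Y,IZ)$ (Lemma~\ref{Ric_Iconn}). This yields
\begin{align*}
\mathrm{Tr}_{TF}(I\circ R^{\nabla^{F}}_{Y,Z})&=-4(m+1)\,B(Y,IZ),\\
\mathrm{Tr}_{TM}(I\circ R^{\nabla^{I}}_{Y,Z})&=-8n\,B(Y,IZ)=-2Ric^{\nabla^{I}}(Y,IZ).
\end{align*}
The ratio $(m+1)/2n$ is the stated identity. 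It is what the paper reads off from the projected formula \eqref{4_10_ric}, namely $n\,\mathrm{Tr}\,R^{\nabla^{F}}_{X,Y}I_{1}=(m+1)Ric^{\nabla^{I}}(I_{1}X,Y)$.

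A check on Example~\ref{cpx_pr} cannot replace this derivation. Moreover, $\mathbb{H}P^{n}$ carries no global compatible complex structure, so you would have to work on an open subset and compute $\iota^{\ast}c_{1}$ there independently.

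The remaining steps are fine: identifying $I=\pm f_{Q}/\|f_{Q}\|$ on $F$, and both obstructions. One small point: with the paper's conventions ($\mathrm{Tr}(I\circ R)=-2Ric_{I}$ and $c_{1}=[-\frac{1}{2\pi}Ric_{I}]$) the normalization is $c_{1}=[\frac{1}{4\pi}\mathrm{Tr}(I\circ R)]$, the opposite sign to yours. This is harmless here, since you use it on both sides.
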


\begin{proof}
From the assumption, we have the quaternionic connection $\nabla^{I}$ such that $\nabla^{I}I=0$. 
Since $L_{X} I = 0$ and $\nabla^{I} I = 0$, 
it follows that $f_{Q}$ and $I$ are linearly dependent, hence $
I = \pm \frac{f_{Q}}{\| f_{Q} \|}$ on $F$.
Therefore, by (3) of Lemma \ref{cpx_submfd}, 
$F$ is transversally complex. 
By Lemma \ref{inv_111}, 
the connection $\nabla^{I}$ is $S^{1}$-invariant. 
Because of Lemma \ref{fix_point}, $F$ is totally geodesic with respect to $\nabla^{I}$. 
Consequently, we can define a connection $\nabla^{F}$ on $F$ by 
\[ \iota_{\ast}(\nabla^{F}_{X}Y)= \nabla^{I}_{X} \iota_{\ast}Y \]
for $X$, $Y \in \Gamma(TF)$. Then 
it holds that $\iota_{\ast} R^{\nabla^{F}}_{X,Y}Z=R^{\nabla^{I}}_{\iota_{\ast}X, \iota_{\ast}Y} \iota_{\ast}Z$.  
Since $W=0$, we obtain 
\[ R^{\nabla^{I}}_{\iota_{\ast}X, \iota_{\ast}Y} \iota_{\ast}Z=R^{B^{\nabla^{I}}}_{\iota_{\ast}X, \iota_{\ast}Y} \iota_{\ast}Z \in \iota_{\ast}(TF) \oplus I_{2} (\iota_{\ast}(TF)) 
(=\iota_{\ast}(TF) \oplus I_{3} (\iota_{\ast}(TF))) \]
for $X$, $Y$, $Z \in TF$. 
Denote the projection from 
$\iota_{\ast}(TF) \oplus I_{2} (\iota_{\ast}(TF)) $ on the first factor $\iota_{\ast}(T (F \cap U))$ by $p$, 
where $(I_{1}=I,I_{2},I_{3})$ is an admissible frame. 
We have 
\begin{align}\label{4_10_ric}
\iota_{\ast} R^{\nabla^{F}}_{X,Y}Z
=&p(R^{\nabla^{I}}_{\iota_{\ast}X, \iota_{\ast}Y} \iota_{\ast}Z)
=p(R^{B^{\nabla^{I}}}_{\iota_{\ast}X, \iota_{\ast}Y} \iota_{\ast}Z) \\
=&B^{\nabla^{I}}( \iota_{\ast}Y, \iota_{\ast}Z) \iota_{\ast}X
-B^{\nabla^{I}}( \iota_{\ast}X, \iota_{\ast}Z) \iota_{\ast}Y 
+2 B^{\nabla^{I}}( \iota_{\ast}X,I_{1} \iota_{\ast}Y)I_{1} \iota_{\ast}Z \nonumber \\
&-B^{\nabla^{I}}( \iota_{\ast}Y,I_{1} \iota_{\ast}Z)I_{1}X
+B^{\nabla^{I}}( \iota_{\ast}X,I_{1} \iota_{\ast}Z)I_{1} \iota_{\ast}Y \nonumber
\end{align}
for $X$, $Y$, $Z \in TF$ by the virtue of 
Remark \ref{rem_para}. 
Since $B^{\nabla^{I}}=\frac{1}{4n} Ric^{\nabla^{I}}$, we have
\[ n \mathrm{Tr}R^{\nabla^{F}}_{X,Y} I_{1}=(m+1)(\iota^{\ast}Ric^{\nabla^{I}})(I_{1}X,Y) \]
for $X$, $Y \in TF$ on $F$. This shows that 
\begin{align*}
2n c_{1}(F) =(m+1) \iota^{\ast}(c_{1}(M)). 
\end{align*}
Theorem \ref{chern_mu_conn} together with 
Lemmas \ref{hyp_chern} and \ref{Ric_Iconn} 
implies that $M$ does not admit a hypercomplex structure 
$(I_{1},I_{2},I_{3})$ on any open set $U \subset M$ 
with $I_{1}=I$ and $F \subset U$, 
since $c_{1}(F) \neq 0$. 
The second assertion follows from Proposition \ref{a113} 
and Lemma \ref{Ric_Iconn}, 
as the characteristic $4$-form is not exact.
\end{proof}

\begin{remark}
{\rm 
If $Ric^{\nabla^{F}}_{x} \neq 0$ at a point $x \in F$, then any neighborhood $U \subset M$ of $x$ does 
not admit a hypercomplex structure $(I_{1}=I,I_{2},I_{3})$ on $U$ by 
Proposition \ref{local_obst}, Lemma \ref{Ric_Iconn} 
and \eqref{4_10_ric}. 
}
\end{remark}

As a direct consequence, we have the following corollary.

\begin{corollary}\label{q_flat_fix_qcm}
Let $(M,Q)$ be a quaternionic manifold with an $S^{1}$-quaternionic action and a twistor function $\mu$, and let $F$ be a connected component of the fixed-point set of the $S^{1}$-action with $f_{Q} \neq 0$ 
and $\mu^{-1}(0) \cap F =\emptyset$. 
Set $M^{\prime}=M \setminus \mu^{-1}(0)$. 
Assume that the complex structure $I$ induced by $\mu$ is $S^{1}$-invariant and that 
the quaternionic Weyl curvature of $(M,Q)$ vanishes. 
Then the same conclusion as in Theorem \ref{q_flat_fix} holds for $M^{\prime}$ and $F$.
\end{corollary}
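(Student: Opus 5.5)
The plan is to reduce Corollary \ref{q_flat_fix_qcm} to Theorem \ref{q_flat_fix} applied to the open quaternionic manifold $M^{\prime}=M\setminus\mu^{-1}(0)$ with the restricted structure $Q|_{M^{\prime}}$. The work consists of checking the hypotheses of Theorem \ref{q_flat_fix} on $M^{\prime}$: an $S^{1}$-quaternionic action, a fixed component $F$ with $f_{Q}\neq 0$, vanishing Weyl curvature, and an $S^{1}$-invariant compatible complex structure of closed type. I take the ``twistor function'' on $M$ to be defined globally on $M$, possibly with zeros.

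First, $M^{\prime}$ is $S^{1}$-invariant. This holds because $I$ is $S^{1}$-invariant, so by Corollary \ref{ex_m_map} $\mu$ is a function multiple of $f_{Q}^{\nabla^{\mu}}\otimes e$. More directly, one can use the invariance of $\mu$ itself, obtained from naturality of the twistor operator under quaternionic diffeomorphisms (if needed, replace $\mu$ by its $S^{1}$-average, which is still a twistor function since $D^{s_{0}}$ is natural). Then $\mu^{-1}(0)$ is invariant, so the action restricts to $M^{\prime}$, still preserving $Q$. Since $F\cap\mu^{-1}(0)=\emptyset$, $F$ is a connected component of the fixed-point set of the restricted action, and $f_{Q}\neq 0$ on $F$ is unchanged; this condition is local on $F$ and independent of the connection. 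The Weyl tensor $W$ is a local invariant of $Q$, so it vanishes on $M^{\prime}$.

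Next, on $M^{\prime}$ the pair $(M^{\prime},Q,\mu)$ is a quaternionic complex manifold. By Proposition \ref{integrable}, $I=\bar\mu/\|\bar\mu\|$ is an integrable compatible complex structure, and the $\mu$-connection satisfies $\nabla^{\mu}I=0$ and has symmetric Ricci tensor. Hence $\eta^{\nabla^{\mu}}_{I}=0$, because $\omega_{2}=\omega_{3}=0$ in a frame with $I_{1}=I$. By Definition \ref{def_close_type}, $I$ is therefore of closed type; the paper already remarks this after the definition. The $S^{1}$-invariance of $I$ is assumed.

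With these checks, Theorem \ref{q_flat_fix} applies verbatim to $(M^{\prime},Q)$, $F$ and $I$. It gives $2n\,c_{1}(F)=(m+1)\iota^{\ast}c_{1}(M^{\prime})$ together with the two obstructions for open sets $U\subset M^{\prime}$ containing $F$. In this case $\nabla^{I}=\nabla^{\mu}$ by Lemma \ref{3_33}, so $c_{1}(M^{\prime})$ is represented by $-\frac{1}{2\pi}Ric^{\nabla^{\mu}}_{I}$ by Theorem \ref{chern_mu_conn}. The only step needing care is the invariance of $M^{\prime}$. If averaging is used, one must also check that the averaged $\mu$ still induces $I$ and does not vanish on $F$. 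It still induces $I$ because invariance of $I$ forces each translate of $\bar\mu$ to be a positive multiple of $I$ after a sign choice. It does not vanish on $F$ because $F$ is pointwise fixed.
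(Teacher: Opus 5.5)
Your core argument is the paper's. Its entire proof is the observation that the complex structure $I$ induced by $\mu$ is of closed type on $M'$: $\nabla^{\mu}I=0$ gives $\eta^{\nabla^{\mu}}_{I}=0$, and $Ric^{\nabla^{\mu}}$ is symmetric. Theorem~\ref{q_flat_fix} is then applied to $M'$, exactly as you do.

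The only place you go beyond the paper is the $S^{1}$-invariance of $M'$. The paper treats this as part of the hypothesis, since $I$ is an $S^{1}$-invariant complex structure defined on $M'$. Your justifications of it do not work as written:
\begin{itemize}
\item Naturality of $D^{s_{0}}$ only shows that the translates $\varphi_{t\ast}\mu$ are again twistor functions. It does not show that $\mu$ is invariant.
\item Corollary~\ref{ex_m_map} determines $\mu$ only up to an arbitrary function factor, and $f_{Q}^{\nabla^{\mu}}$ lives only on $M'$. So it says nothing about the zero set.
\item The averaging step is circular. Your claim that every translate of $\bar\mu$ at a point of $M'$ is a nonzero multiple of $I$ presupposes that the orbit of that point stays in $M'$. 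Without this, the average may vanish on $M'$, and its zero set need not be $\mu^{-1}(0)$, so you would be applying the theorem to a different open set.
\end{itemize}

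If you want to derive the invariance from the local condition $L_{X}I=0$ on $M'$, use the following argument.
\begin{enumerate}
\item The proof of Lemma~\ref{inv_111} is local, so $L_{X}\nabla^{\mu}=0$ on $M'$.
\item Hence $L_{X}\nu^{\mu}$ is $\nabla^{\mu}$-parallel, so $L_{X}\nu^{\mu}=c\,\nu^{\mu}$ with $c$ locally constant on $M'$.
\item Along an orbit segment inside $M'$ this gives $(\varphi_{t}^{\ast}\nu^{\mu})_{x}=e^{ct}\nu^{\mu}_{x}$, which stays bounded for bounded $t$.
\item On the other hand, $(\varphi_{t}^{\ast}\nu^{\mu})_{x}=\|\bar\mu(\varphi_{t}(x))\|^{-2(n+1)}(\varphi_{t}^{\ast}\nu)_{x}$, which blows up if $\varphi_{t}(x)$ approaches $\mu^{-1}(0)$.
\end{enumerate}
So no orbit starting in $M'$ can reach $\mu^{-1}(0)$ in finite time, and $M'$ is $S^{1}$-invariant.
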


\begin{proof}
The complex structure $I$ induced by the twistor function $\mu$ 
is of closed type.
\end{proof}

We point out that there is a non quaternionically flat hypercomplex manifold with quaternionic 
$S^{1}$-action whose first Chern class of the fixed-point set is not trivial as follows.  
Let $(M,H=(I_{1},I_{2},I_{3}))$ be a hypercomplex manifold. 
Consider $\mu=f I_{1} \otimes e \in \Gamma(Q \otimes L^{s_{0}})$ and calculate $\nabla^{0} \mu$
with respect to the Obata connection $\nabla^{0}$ of $H$. 
We have  
\begin{align*}
\nabla^{0} \mu=(df \otimes I_{1}+s_{0}f \theta_{L} \otimes I_{1}) \otimes e. 
\end{align*}
If $f$ is a nowhere vanishing function and $\mu$ is a twistor equation, then 
\[ \theta_{L}=-\frac{1}{s_{0}} d \log |f|. \]  
In particular, if the induced connection on $L$ from the Obata connection 
$\nabla^{0}$ has a parallel non-zero section, then $\mu=c I_{1}$ is a twistor function, 
where $c$ is non-zero constant. Thus $\mu^{-1}(0)=\emptyset$. 
This is true in hyperK\"ahlerian cases.  
Let $T^{\ast} \mathbb{C}P^{n}$ be the cotangent bundle of 
the complex projective space $\mathbb{C}P^{n}$. 
It is known that $T^{\ast} \mathbb{C}P^{n}$ carries the hyperK\"ahler structure 
(Calabi metric) with $W \neq 0$. 
Considering the $S^{1}$-action given by the scalar multiplication on the fibers,   
the fixed-point set of this action is $0$-section, which is $\mathbb{C}P^{n}$. See also Example \ref{ex_gr2}.

\subsection{The quaternionic projective space--Pontecorvo's example--}\label{sec_hp_n}
Let $\mathbb{H}P^{n}$ be the (right-)quaternionic projective space 
with its standard quaternionic structure $Q$. 
It is well known that $(\mathbb{H}P^{n},Q)$ admits the standard 
quaternionic K\"ahler metric $g$. 
We denote by $\mathrm{Aut}(Q)$ (resp.\ $\mathrm{Aut}(g)$) 
the group of quaternionic transformations 
(resp.\ isometries) of $\mathbb{H}P^{n}$. 
It is known that 
\[
\mathrm{Aut}(g)=\mathrm{Sp}(n+1)/\mathbb{Z}_{2} 
\subsetneq 
\mathrm{Aut}(Q)
=\mathrm{PGL}(n+1,\mathbb{H})
=\mathrm{GL}(n+1,\mathbb{H})/\mathbb{R}^{\ast}.
\]
See, for example, \cite{AM2}. 
Thus, in general, a quaternionic $S^{1}$-action on 
$(\mathbb{H}P^{n},Q)$ is not necessarily isometric 
with respect to $g$. 
To determine the fixed-point sets of a quaternionic 
$S^{1}$-action on $\mathbb{H}P^{n}$, 
we therefore work independently of the metric $g$. 
The following lemma is well known, 
however, we include an alternative and more elementary proof for completeness.

\begin{lemma}\label{q_inv}
Let $(M,Q)$ be a quaternionic manifold. 
If $N$ is a $Q$-invariant submanifold, then $N$ is totally geodesic for any quaternionic connection 
$\nabla$.
\end{lemma}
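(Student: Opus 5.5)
The plan is to use the characterization of the second fundamental form through the difference of connections. Fix a quaternionic connection $\nabla$ and set $h(X,Y)$ to be the normal component of $\nabla_XY$ for $X,Y\in\Gamma(TN)$. Since $\nabla$ is torsion-free, $h$ is symmetric. Since $N$ is $Q$-invariant, we have $I_\alpha Y\in\Gamma(TN)$ for any local admissible frame $(I_1,I_2,I_3)$. Then
\[ \nabla_X(I_\alpha Y)=(\nabla_X I_\alpha)Y+I_\alpha\nabla_X Y, \]
and $\nabla_X I_\alpha\in Q$ because $\nabla$ preserves $Q$. So $(\nabla_XI_\alpha)Y$ is tangent to $N$.

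Note that $TM|_N=TN\oplus\nu$ can be chosen with $\nu$ a $Q$-invariant complement; take for instance the orthogonal complement with respect to any $Q$-hermitian metric. With this choice, normal projection commutes with $I_\alpha$, and we get
\[ h(X,I_\alpha Y)=I_\alpha h(X,Y). \]
Combining this with the symmetry of $h$ gives, for cyclic $(\alpha,\beta,\gamma)$,
\[ h(I_\alpha X,I_\beta Y)=I_\alpha h(X,I_\beta Y)=I_\alpha I_\beta h(X,Y)=I_\gamma h(X,Y). \]
Computing the same quantity in the other order gives
\[ h(I_\alpha X,I_\beta Y)=I_\beta h(I_\alpha X,Y)=I_\beta I_\alpha h(X,Y)=-I_\gamma h(X,Y). \]
Hence $I_\gamma h(X,Y)=0$, and so $h=0$. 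Consequently $N$ is totally geodesic for $\nabla$. Since $\nabla$ was an arbitrary quaternionic connection, this holds for every quaternionic connection.

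The one point that needs care is that the normal bundle is defined only as a quotient $TM|_N/TN$. To avoid choosing a metric, I would define $h$ as a $TM|_N/TN$-valued form. Then $Q$ acts on the quotient because $TN$ is $Q$-invariant, and the identity $h(X,I_\alpha Y)=I_\alpha h(X,Y)$ holds there directly. Being totally geodesic means exactly $h=0$ in the quotient, so no metric is needed.

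As a consistency check, I would also verify the conclusion using Lemma \ref{q_conn_eq}: $S^\xi_XY\in TN$ for $X,Y\in TN$ by $Q$-invariance. This confirms independence of the connection, although it is not needed once the argument works for an arbitrary $\nabla$.

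The main obstacle is the anticommutation step. The argument above requires $h$ to be bilinear and symmetric, and it requires the action of $Q$ on the quotient bundle to satisfy the quaternionic relations. Both follow from the $Q$-invariance of $TN$.
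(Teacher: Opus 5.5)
Your proof is correct and follows essentially the same route as the paper: $h$ is taken as a $TM|_N/TN$-valued symmetric form that is $Q$-linear in each slot, and it is then killed by the quaternionic relations. The only difference is the final algebra. The paper combines $h(I_\alpha X,I_\alpha Y)=-h(X,Y)$ with $I_\gamma=I_\alpha I_\beta$, whereas you compare $I_\alpha$ and $I_\beta$ acting in different slots, which is the same idea; you also justify $h(X,I_\alpha Y)=I_\alpha h(X,Y)$ via $\nabla_X I_\alpha\in Q$, a step the paper leaves implicit.
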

\begin{proof}
Let $\nabla$ be any quaternionic connection of $Q$ and  
set $h_{x}(X,Y):=[(\nabla_{X}Y)_{x}] \in T_{x}M/T_{x}N$ at $x \in N$ for all $X$, $Y \in \Gamma(TN)$. 
We have $h(X,I_{\alpha}Y)=I_{\alpha}h(X,Y)$ for any $\alpha \in \{1,2,3 \}$. 
Here we note that $I_{\alpha}[\xi]:=[I_{\alpha} \xi] \in T_{x}M/T_{x}N$ for $\xi \in T_{x}M$. 
Since $\nabla$ is torsion-free, 
$h$ is symmetric. Therefore, it holds that 
$h(I_{\alpha}X,I_{\alpha}Y)=-h(X,Y)$ for any $\alpha \in \{1,2,3 \}$. 
This implies that  
\[ h(I_{\gamma}X,I_{\gamma}Y)=h(I_{\alpha}I_{\beta}X,I_{\alpha}I_{\beta}Y)
=-h(I_{\beta}X,I_{\beta}Y)=h(X,Y) \]
for any cyclic permutation $(\alpha,\beta,\gamma)$ of $(1,2,3)$. 
Hence $h$ vanishes, that is, $\nabla_{X}Y \in \Gamma(TN)$ for all $X,Y \in \Gamma(TN)$. 
\end{proof}

 In particular, the induced quaternionic structure and connection on $N$ makes 
$N$ a quaternionic manifold.

\begin{definition}
A quaternionic manifold $(M,Q)$ is said to be {\cmssl complete}
if there exists a quaternionic connection $\nabla$ 
such that $(M,\nabla)$ is geodesically complete. 
Here, $(M,\nabla)$ is called $\nabla$-geodesically complete 
if every $\nabla$-geodesic is defined on the entire real line $\mathbb{R}$.
\end{definition}

The quaternionic projective space $\mathbb{H}P^{n}$ with standard quaternionic structure 
is complete. Indeed, the Levi-Civita connection of the standard Riemannian metric is a quaternionic connection.

\begin{lemma}\label{t_geodesic}
Let $N_{1}$ and $N_{2}$ are $Q$-invariant submanifolds in a complete quaternionic manifold $(M,Q)$. 
Assume that there exists a point $x \in N_{1} \cap N_{2}$ such that $T_{x}N_{1}=T_{x}N_{2}$. 
If $N_{1}$ is connected and $N_{2}$ is geodesically complete for the induced quaternionic connection, 
then $N_{1}$ is an open submanifold in $N_{2}$. 
\end{lemma}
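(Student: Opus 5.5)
The plan is to fix a complete quaternionic connection $\nabla$ on $M$ and use the induced connections on $N_{1}$ and $N_{2}$. By Lemma \ref{q_inv}, both $N_{1}$ and $N_{2}$ are totally geodesic for $\nabla$. Hence the induced connections $\nabla^{N_{1}}$ and $\nabla^{N_{2}}$ are simply the restrictions of $\nabla$, and their geodesics are $\nabla$-geodesics of $M$ lying in $N_{i}$. Two consequences follow. First, a $\nabla$-geodesic of $M$ with initial velocity tangent to $N_{i}$ stays in $N_{i}$ for as long as the $\nabla^{N_{i}}$-geodesic is defined. Second, by uniqueness of geodesics it coincides with that $\nabla^{N_{i}}$-geodesic. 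The strategy is to show that the set
\[ A:=\{ y \in N_{1} \cap N_{2} \mid T_{y}N_{1}=T_{y}N_{2},\ \text{and a neighborhood of } y \text{ in } N_{1} \text{ lies in } N_{2} \} \]
is nonempty, open, and closed in $N_{1}$. Connectedness of $N_{1}$ then gives $N_{1}\subset N_{2}$. Since $\dim N_{1}=\dim N_{2}$, the inclusion is an injective immersion between manifolds of equal dimension, hence an open map, so $N_{1}$ is open in $N_{2}$.

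The first step is to show $x\in A$. I would take a small normal neighborhood $V=\exp^{N_{1}}_{x}(B)$ of $x$ in $N_{1}$, where $B\subset T_{x}N_{1}$ is a star-shaped neighborhood of $0$. For $v\in B$, the curve $t\mapsto \exp^{N_{1}}_{x}(tv)$, $t\in[0,1]$, is a $\nabla$-geodesic of $M$ with initial velocity $v\in T_{x}N_{2}$. Since $N_{2}$ is geodesically complete, the $\nabla^{N_{2}}$-geodesic with initial velocity $v$ exists for all time and is also a $\nabla$-geodesic. By uniqueness the two curves agree, so $\exp^{N_{1}}_{x}(v)=\exp^{N_{2}}_{x}(v)\in N_{2}$. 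Thus $V\subset N_{2}$. The tangent spaces then agree on $V$ because $\exp^{N_{1}}_{x}=\exp^{N_{2}}_{x}$ on $B$ and both are local diffeomorphisms at $0$ onto open pieces. More robustly, one can say that $V$ is an embedded submanifold of $N_{2}$ of the same dimension. The same argument applied at any point of $A$ shows that $A$ is open.

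The main obstacle is closedness of $A$ in $N_{1}$. I would take $y\in\overline{A}$ and a point $y'\in A$ inside a normal neighborhood of $y$ in $N_{1}$, chosen small enough that $y$ is the endpoint of a short $\nabla^{N_{1}}$-geodesic from $y'$. The point is that the argument for $x\in A$ only used the tangent condition at the base point, and that condition holds at $y'$. Running the argument from $y'$, the $N_{1}$-geodesic from $y'$ to $y$ is also an $N_{2}$-geodesic, since $N_{2}$ is complete and the initial vector lies in $T_{y'}N_{1}=T_{y'}N_{2}$. Hence $y\in N_{2}$. Moreover, $\exp^{N_{1}}_{y'}$ and $\exp^{N_{2}}_{y'}$ agree on a star-shaped set containing the preimage of a neighborhood of $y$, because $N_{2}$ is complete and $N_{1}$ is defined there. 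At $y$, $\exp^{N_{1}}_{y'}$ is a local diffeomorphism, by choosing $y'$ within a uniformly normal (convex) neighborhood of $y$. It follows that a neighborhood of $y$ in $N_{1}$ lies in $N_{2}$, and the tangent spaces coincide by dimension. The care needed is in choosing the uniformly normal neighborhood so that geodesics from $y'$ reach $y$ inside the domain of $\exp^{N_{1}}_{y'}$. This is standard, via Whitehead's convex neighborhoods for affine connections.
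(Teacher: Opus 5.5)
Your proof is correct and uses the same idea as the paper. Both submanifolds are totally geodesic by Lemma \ref{q_inv}, so completeness of $N_{2}$ together with uniqueness of geodesics makes every $N_{1}$-geodesic starting at a point of tangency an $N_{2}$-geodesic, and connectedness of $N_{1}$ finishes the argument. The paper differs only in how it carries $T_{y}N_{1}=T_{y}N_{2}$ to the endpoint: it uses $\nabla$-parallel translation along the common geodesic (both $TN_{1}$ and $TN_{2}$ are parallel along curves lying in them), which lets it argue along broken geodesics and makes your convex-neighborhood step unnecessary. Your exponential-map argument, in turn, shows directly that $N_{1}$ is open in $N_{2}$, not just contained in it.
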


\begin{proof}
The proof is similar as in 14. Lemma in p.105 of \cite{O}. 
By Lemma \ref{q_inv}, $N_{1}$ and $N_{2}$ are totally geodesic submanifolds for 
any quaternionic connection. Because $(M,Q)$ is complete, there exists  
a quaternionic connection $\nabla$ 
for which $(M,\nabla)$ is $\nabla$-geodesically complete. 
Let $\gamma$ be a geodesic on $N_{1}$ starting from $x$ to $y$ with respect to the induced connection from 
$\nabla$. 
So $\gamma$ is a geodesic on $(M,\nabla)$ and $\gamma^{\prime}(0) \in T_{x}N_{2}$.
Therefore, $\gamma$ is a geodesic on $N_{2}$ since $N_{2}$ is totally geodesic and $\nabla$-complete. 
Since the parallel translation of $T_{x}N_{1}=T_{x}N_{2}$ on $(M,\nabla)$ along 
$\gamma$ gives $T_{y}N_{1}=T_{y}N_{2}$. 
This shows $y \in N_{2}$. By repeating this argument and connected assumption for $N_{1}$, 
we have $N_{1} \subset N_{2}$.    
\end{proof}

Now we can characterize the fixed-point sets of a quaternionic $S^{1}$-action on $\mathbb{H}P^{n}$. 

\begin{theorem}\label{fix_hp_n}
If $S^{1}$ acts on $\mathbb{H}P^{n}$ preserving the quaternionic structure, then 
each connected component $F$ of the fixed-point set 
is one of the following: 
\begin{enumerate}
\item an isolated point, 
\item a submanifold diffeomorphic to $\mathbb{C}P^{l}$ with $f_{Q} \neq 0$ 
which is a transversally totally complex submanifold contained in 
$\mathbb{H}P^{l} \subset \mathbb{H}P^{n}$, 
\item a submanifold diffeomorphic to $\mathbb{H}P^{k}$ with $f_{Q}=0$. 
\end{enumerate}
\end{theorem}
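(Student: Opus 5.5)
We would prove Theorem \ref{fix_hp_n} by working with the totally geodesic structure given by Theorem \ref{thm_fixpts} and Lemma \ref{t_geodesic}, splitting into the two cases $f_{Q}=0$ and $f_{Q}\neq 0$ on $F$. By Theorem \ref{thm_fixpts}, $\|f_{Q}\|$ is constant on $F$, so this case distinction is well defined on each component. Since the $S^{1}$-action sits inside $\mathrm{PGL}(n+1,\mathbb{H})$ and $S^{1}$ is compact, we may conjugate it into $\mathrm{Sp}(n+1)/\mathbb{Z}_{2}$. The conjugated action is therefore isometric for the standard metric $g$, and the Levi-Civita connection $\nabla^{g}$ is an $S^{1}$-invariant quaternionic connection. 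Conjugation maps fixed-point components diffeomorphically and preserves $Q$, so it suffices to treat isometric actions.

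\emph{Case $f_{Q}=0$.} By Theorem \ref{thm_fixpts}, $F$ is a quaternionic submanifold with $T_{x}F$ a $Q$-invariant subspace of quaternionic dimension $k$. If $k=0$, $F$ is an isolated point. Otherwise we pick a linear $\mathbb{H}P^{k}\subset\mathbb{H}P^{n}$ through $x$ with $T_{x}\mathbb{H}P^{k}=T_{x}F$; such a subspace exists because $\mathrm{Sp}(n+1)$ acts transitively on pairs consisting of a point and a quaternionic subspace of its tangent space. Both submanifolds are $Q$-invariant, and $\mathbb{H}P^{k}$ is geodesically complete. Lemma \ref{t_geodesic} then gives $F$ open in $\mathbb{H}P^{k}$. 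Since $F$ is a fixed-point component of a compact group action, it is closed in $\mathbb{H}P^{n}$. Connectedness of $\mathbb{H}P^{k}$ then gives $F=\mathbb{H}P^{k}$.

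\emph{Case $f_{Q}\neq 0$.} By Theorem \ref{thm_fixpts} and Remark \ref{rem_para}, $F$ is transversally complex of complex dimension $l\le n$ and totally geodesic for $\nabla^{g}$. The subbundle $V=TF\oplus I_{2}TF$ is $Q$-invariant and parallel. We then show that $F$ lies in a quaternionic submanifold isometric to $\mathbb{H}P^{l}$:
\begin{enumerate}
\item Take $N=\exp_{x}(V_{x})$, the totally geodesic $\mathbb{H}P^{l}$ tangent to $V_{x}$.
\item Since $F$ is $\nabla^{g}$-totally geodesic and connected, every point of $F$ is reached by a geodesic of $F$ from $x$.
\item Parallel transport along such a geodesic preserves both $V$ and $T\mathbb{H}P^{l}$, and these agree at $x$. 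Hence the geodesic stays in $N$, exactly as in the proof of Lemma \ref{t_geodesic}.
\end{enumerate}
Thus $F\subset\mathbb{H}P^{l}$, and $F$ is a closed, totally geodesic, half-dimensional transversally complex submanifold of $\mathbb{H}P^{l}$.

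The main obstacle is identifying $F$ with $\mathbb{C}P^{l}$. Our first approach is a second application of the Lemma \ref{t_geodesic} argument, this time comparing with the standard $\mathbb{C}P^{l}\subset\mathbb{H}P^{l}$:
\begin{enumerate}
\item Totally geodesic submanifolds of $\mathbb{H}P^{l}$ are determined by their tangent space at one point. The possible tangent spaces form curvature-invariant subspaces, classified by Wolf.
\item A half-dimensional, $I_{1}$-complex subspace that is totally real with respect to $I_{2}$ and $I_{3}$ is $\mathrm{Sp}(l+1)$-equivalent to $T_{x}\mathbb{C}P^{l}$. It is curvature-invariant, since $R^{g}$ is given by the quaternionic space-form formula.
\item Completeness of $\mathbb{C}P^{l}$, together with closedness of $F$, then yields $F=\mathbb{C}P^{l}$.
\end{enumerate}

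As a fallback, we would diagonalize the generator in $\mathfrak{sp}(n+1)$ as $\mathrm{diag}(i\lambda_{1},\dots,i\lambda_{n+1})$ and compute the fixed points directly. A point $[v]$ is fixed exactly when $v$ lies in a sum of eigenspaces with $\lambda_{j}=\pm\lambda$ for some $\lambda$, twisted by $j$. This sum is a quaternionic subspace whose fixed locus is $\mathbb{C}P^{l}$ when $\lambda\neq 0$ and $\mathbb{H}P^{k}$ when $\lambda=0$. This recovers all three cases and the inclusion $\mathbb{C}P^{l}\subset\mathbb{H}P^{l}$.
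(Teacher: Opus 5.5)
Your first route to $F\cong\mathbb{C}P^{l}$ contains a false step. Lemma~\ref{cpx_submfd} only gives $T_{x}F\cap I_{2}T_{x}F=\{0\}$. For $l\geq 2$ this does not make $T_{x}F$ equivalent to $T_{x}\mathbb{C}P^{l}$. Fix $I_{1}$. The $l$-dimensional $I_{1}$-complex subspaces $W\subset V_{x}\cong\mathbb{C}^{2l}$ with $W\cap I_{2}W=\{0\}$ form an open subset of $\mathrm{Gr}_{l}(\mathbb{C}^{2l})$, of real dimension $2l^{2}$. The orbit of $T_{x}\mathbb{C}P^{l}$ under isotropy elements preserving $I_{1}$ is only $\mathrm{Sp}(l)/\mathrm{U}(l)$, of dimension $l^{2}+l$, and it consists exactly of the $W$ with $I_{2}W\perp W$.

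Curvature invariance is what rules out the other subspaces, so you must use it as an input rather than derive it as a consequence. Since $F$ is $\nabla^{g}$-totally geodesic, $R^{g}(X,Y)X\in T_{x}F$ for $X,Y\in T_{x}F$. In the space-form formula the $I_{2}$- and $I_{3}$-terms lie in $I_{2}T_{x}F$ and add up to a nonzero multiple of $g(I_{2}Y,X)I_{2}X+g(I_{3}Y,X)I_{3}X$. Transversality therefore forces $g(I_{2}Y,X)=g(I_{3}Y,X)=0$, i.e.\ $I_{2}T_{x}F\perp T_{x}F$. An $I_{1}$-unitary basis of $T_{x}F$ is then a quaternionic-unitary basis of $V_{x}$. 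Hence an isometry fixing $x$ carries $T_{x}F$ onto the tangent space of a standard $\mathbb{C}P^{l}$ through $x$, and your steps 1 and 3 finish the argument; Wolf's classification is not needed.

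Your fallback, by contrast, is correct and already gives a complete proof. The reduction to isometric actions is valid: $\mathrm{PGL}(n+1,\mathbb{H})$ is connected, so by Cartan--Malcev--Iwasawa the compact image of $S^{1}$ is conjugate into $\mathrm{Sp}(n+1)/\mathbb{Z}_{2}$. Conjugation by an element of $\mathrm{Aut}(Q)$ preserves fixed-point components, linear $\mathbb{H}P^{l}$'s, transversal complexity and the vanishing of $f_{Q}$.

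For the lifted generator $\mathrm{diag}(i\lambda_{1},\dots,i\lambda_{n+1})$, a point $[v]$ is fixed iff $i\lambda_{k}v_{k}=v_{k}p$ for all $k$ and a common $p\in\mathrm{Im}\,\mathbb{H}$. This forces $|\lambda_{k}|=|p|$ on the support of $v$. After normalising $p$ to $i|p|$ by a right unit quaternion, $v_{k}\in\mathbb{C}$ or $v_{k}\in j\mathbb{C}$ according to the sign of $\lambda_{k}$. This yields $\mathbb{H}P^{k}$ when $p=0$, and otherwise a linear $\mathbb{C}P^{l}$ inside the $\mathbb{H}P^{l}$ spanned by the coordinates with $|\lambda_{k}|=|p|$.

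You should add the matching with $f_{Q}$. It follows from Theorem~\ref{thm_fixpts}: a positive-dimensional $\mathbb{C}P^{l}$ of this form is not $Q$-invariant, and an $\mathbb{H}P^{k}$ with $k\geq 1$ is not transversally complex.

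This is a genuinely different route from the paper. The paper works without the metric, since $\mathrm{Aut}(g)\subsetneq\mathrm{Aut}(Q)$. It identifies $F$ through Tsukada's characterization of $\mathbb{C}P^{l}$ by the vanishing of the $(2,0)+(0,2)$-part of the second fundamental form when $l\geq 2$, and through totally umbilic surfaces in the conformal $S^{4}$ when $l=1$. Your conjugation step shows that assuming isometry loses nothing for $\mathbb{H}P^{n}$, after which everything reduces to linear algebra. Your argument is self-contained, uniform in $l$, and describes the components explicitly through the weights $\lambda_{k}$. The paper's argument is local and metric-free, fitting the quaternionic-submanifold viewpoint of the rest of the paper, but it relies on Tsukada's external rigidity theorem and a separate four-dimensional argument.
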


\begin{proof}
By Theorem \ref{thm_fixpts}, 
$F$ is either a quaternionic or a transversally complex and totally geodesic submanifold. 
We first consider the case where $F$ is a quaternionic submanifold in $M$. 
We may assume that there exists a point $x \in F$ such that 
$T_{x}F=T_{x} \mathbb{H}P^{k}$, where $4k=\dim F$. 
By Lemma \ref{t_geodesic}, $F$ is an open submanifold in $\mathbb{H}P^{k}$. 
Since $F$ is closed, it follows that $F=\mathbb{H}P^{k}$. 

Next, we consider 
the case where $F$ is transversally complex. 
Set $V=TF \oplus I_{2} TF(=TF \oplus I_{3} TF)$, which is a $Q$-invariant subbundle of $TM|_{F}$ over $F$. 
Since $F$ is totally geodesic for a quaternionic connection, $V$ is parallel with respect to 
this connection, and hence with respect to any quaternionic connection (Remark \ref{rem_para}). 
We may assume that there exists $x \in F$ such that 
$V_{x} = T_{x}\mathbb{H}P^{l}$, where $4l=\mathrm{rank}\,  V$. 
Take a geodesic $\gamma$ on $F$ from $x$ to $y$. 
Then $\gamma$ is a geodesic in $\mathbb{H}P^{n}$, and hence, 
it is geodesic in $\mathbb{HP}^{l}$, and hence we obtain  
$V_{y}=T_{y}\mathbb{H}P^{l}$. This shows that $F$ is contained in $\mathbb{H}P^{l}$, where 
it is a (totally geodesic and) transversally complex submanifold with $\dim F=2l$. 
Finally, we show that $F=\mathbb{C}P^{l}$.  
We consider the case of $l \geq 2$. 
By Theorem \ref{thm_fixpts}, 
$F$ is a transversally totally complex submanifold and it is totally geodesic 
with respect to a quaternionic connection. 
This means that the second fundamental form of $F$ with respect to this quaternionic connection 
vanishes. 
Consequently, the $(2,0)+(0,2)$-part of the second fundamental form of $F$ with respect to 
the standard quaternionic connection on $\mathbb{H}P^{l}$ vanishes.     
By Theorem 6.7 in \cite{Tsu}, it follows that 
$F$ is an open part of $\mathbb{C}P^{l}$. 
Since $F$ is closed, we conclude that $F=\mathbb{C}P^{l}$. 
When $l=1$, $\mathbb{H}P^{1} \cong S^{4}$ with the conformal structure $[g_{0}]$, where 
$g_{0}$ is standard metric on $S^{4}$. Note
that a quaternionic structure is a conformal structure for four dimensional cases. 
Since $F$ is totally geodesic for a metric in $[g_{0}]$,  
$F$ is totally umbilic with respect to the metric $g_{0}$, hence $F$ is an open part of 
the 2-dimensional sphere of a certain radius, which is isomorphic to  
$\mathbb{C}P^{1}$. The closedness for $F$ implies that $F=\mathbb{C}P^{1}$.  
\end{proof}

\begin{remark}
{\rm 
For a transversally totally complex submanifold 
of half dimension in a quaternionic manifold, 
the $(2,0)+(0,2)$-part of the second fundamental form 
is independent of the choice of quaternionic connection. 
See \cite[Proposition 3.7]{Tsu}.
}
\end{remark}

As a consequence of Corollary \ref{q_flat_fix_qcm} and Theorem \ref{fix_hp_n}, we have the following.

\begin{theorem}\label{HP_Fix}
Let $(\mathbb{H}P^{n},Q)$ be the quaternionic projective space with the standard quaternionic structure 
$Q$ and an $S^{1}$-action preserving $Q$, and let $\mu$ be a twistor function. 
We assume that the complex structure $I$ induced from $\mu$ is $S^{1}$-invariant. 
Let $F$ be a connected component of the fixed-point set of the $S^{1}$-action with $f_{Q} \neq 0$ and 
$\mu^{-1}(0) \cap F =\emptyset$. 
Then 
$\mathbb{H}P^{n} \setminus \mu^{-1}(0)$ does not admit hypercomplex structures $(I_{1}=I,I_{2},I_{3})$
on any open set $U$ containing $F$.  
In particular, if $\dim F \geq 4$, then the ${\rm SO(3)}$-bundle
$S(U)$ is not trivial for any open set 
$U \subset \mathbb{H}P^{n} \setminus \mu^{-1}(0)$ such that $F \subset U$. 
Consequently, if $\dim F \geq 4$, then 
a connected component of $\mathbb{H}P^{n} \setminus \mu^{-1}(0)$ containing $F$ with $f_{Q} \neq 0$ admits no hypercomplex structure $H$ 
such that $Q=\langle H \rangle$.
\end{theorem}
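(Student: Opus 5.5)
We will derive this from Corollary \ref{q_flat_fix_qcm} applied to $M=\mathbb{H}P^{n}$, combined with the classification of fixed components in Theorem \ref{fix_hp_n}.

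\textbf{Step 1: the hypotheses of Corollary \ref{q_flat_fix_qcm} hold.} $\mathbb{H}P^{n}$ with its standard $Q$ is quaternionically flat, i.e. $W=0$. Indeed, the standard Levi-Civita connection $\nabla^{g}$ has curvature of the form $R^{B}$ with $B$ a constant multiple of $g$, so $W=R^{\nabla^{g}}-R^{B^{\nabla^{g}}}=0$. For $n=1$, flatness means conformal flatness of $S^{4}$. The remaining hypotheses ($I$ is $S^{1}$-invariant, $f_{Q}\neq 0$ on $F$, $F\cap\mu^{-1}(0)=\emptyset$) are assumed. Corollary \ref{q_flat_fix_qcm} therefore applies on $M'=\mathbb{H}P^{n}\setminus\mu^{-1}(0)$ and gives
\[
2n\,c_{1}(F)=(m+1)\,\iota^{\ast}c_{1}(M'),\qquad 2m=\dim F,
\]
together with obstructions (1) and (2) of Theorem \ref{q_flat_fix}.

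\textbf{Step 2: identify $F$ and its Chern classes.} By Theorem \ref{fix_hp_n}, since $f_{Q}\neq 0$, $F$ is either an isolated point or $\mathbb{C}P^{l}$ with $l=m$. The isolated-point case must be handled separately. A point has $c_{1}=0$, so obstruction (1) gives nothing and the claim must follow some other way. I would argue that $f_{Q}\neq 0$ forces $\dim F>0$: $\operatorname{Ker}\nabla X$ is $I_{1}$-invariant, and by (2) of Lemma \ref{cpx_submfd} the image contains $I_{2}TF$. I am not sure this actually rules out points, so I would either show that an isolated point has $f_{Q}=0$, or read the statement as concerning positive-dimensional $F$. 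I expect this to be the main obstacle.

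For $F=\mathbb{C}P^{l}$ with $l\ge1$, we need $c_{1}(F)=(l+1)h\neq0$. Here the complex structure on $F$ is the one induced by $I=\pm f_{Q}/\|f_{Q}\|$, as shown in the proof of Theorem \ref{q_flat_fix}. By Theorem \ref{fix_hp_n} (via Tsukada's result) this is the standard structure up to conjugation, and either way $c_{1}\neq0$. Obstruction (1) then yields the first assertion.

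\textbf{Step 3: the case $\dim F\ge4$.} Here $l\ge2$, so $c_{1}(F)^{2}=(l+1)^{2}h^{2}\neq0$ in $H^{4}(\mathbb{C}P^{l})$. Obstruction (2) gives that $S(U)$ is nontrivial for every open $U\supset F$ in $M'$. For the final claim, suppose a hypercomplex $H$ with $Q=\langle H\rangle$ existed on the connected component containing $F$. Then $H$ would globally trivialise the frame bundle $S$ on that component, which is an open set containing $F$, a contradiction.
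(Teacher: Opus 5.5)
Your proposal is the paper's proof written out. The paper derives Theorem~\ref{HP_Fix} from Corollary~\ref{q_flat_fix_qcm} and Theorem~\ref{fix_hp_n} in one line, ``$c_{1}(F)=c_{1}(\mathbb{C}P^{m})\neq 0$''. Your Steps 1--3 expand exactly that line: quaternionic flatness of $\mathbb{H}P^{n}$, obstruction (1) from $c_{1}(F)\neq 0$, obstruction (2) from $c_{1}(F)^{2}\neq 0$ when $m\geq 2$, and the observation that a global $H$ would trivialise $S$.

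The isolated-point case you flagged is a genuine gap, and your first proposed fix cannot work. An isolated fixed point can have $f_{Q}\neq 0$, and for such an $F$ the first assertion of the statement is actually false. Consider the action $e^{i\theta}[z]=[e^{i\theta}z_{1}:z_{2}:\cdots:z_{n+1}]$.
\begin{itemize}
\item In the affine chart $w=(z_{2},\dots,z_{n+1})z_{1}^{-1}\in\mathbb{H}^{n}$, $Q$ is spanned by the right multiplications $R_{i},R_{j},R_{k}$, and the action is $w\mapsto we^{-i\theta}$.
\item Hence at $x=[1:0:\cdots:0]$ one has $\nabla X=-R_{i}\in Q$ with trivial kernel. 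So $x$ is an isolated fixed point with $f_{Q}\neq 0$.
\item Let $\nabla^{0}$ be the flat connection of the chart and $e_{0}$ a $\nabla^{0}$-parallel section of $L^{s_{0}}$. Then $\mu=R_{i}\otimes e_{0}$ is a nowhere vanishing, $S^{1}$-invariant twistor function on the chart, since $\nabla^{0}\mu=0$.
\item Up to a constant, $\mu$ is the restriction of the global twistor function $\mu^{\circ}$ of Section~\ref{sec_weight_grass}. That function is invariant under this action and its zero set is exactly $\{z_{1}=0\}$.
\item The induced complex structure is $I=\pm R_{i}$. So the open set $U=\mathbb{H}^{n}=\mathbb{H}P^{n}\setminus\mu^{-1}(0)$ contains $F=\{x\}$ and carries the flat hypercomplex structure built from $R_{i},R_{j},R_{k}$ with $I_{1}=I$.
\end{itemize}

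So you must take your second option. The statement, and the paper's one-line proof, tacitly assume $\dim F>0$, i.e.\ $F=\mathbb{C}P^{m}$ with $m\geq 1$. Under that reading your argument is complete.
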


\begin{proof}
It follows from $c_{1}(F)=c_{1}(\mathbb{C}P^{m})\neq 0$.  
\end{proof}

To illustrate Pontecorvo's example 
$\mathrm{SO}^{\ast}(2n+2)/\mathrm{SO}^{\ast}(2n) \times \mathrm{SO}^{\ast}(2)$ 
from our viewpoint, we consider the lifted map of a twistor function to the Swann bundle which carries the hypercomplex structure, which has been studied in Section \ref{sec_swann}. 
Consider the quaternionic $S^{1}$-action on $\mathbb{H}P^{n}$ given by 
\begin{align}\label{act_4_18}
e^{i \theta}[z]:=[e^{i \theta}z],  
\end{align}
where $[z] \in \mathbb{H}P^{n}$. Note that $e^{i \theta}(u+jv)=e^{i \theta}u+j e^{-i \theta}v$
for $u+jv \in \mathbb{C} \oplus \mathbb{C}=\mathbb{H}$. 
Take the associated pair $(e,\nu^{g})$, 
where $\nu^{g}$ is the volume form of the standard metric $g$ on 
$\mathbb{H}P^{n}$. 
We consider the twistor function $\mu=f_{Q}^{\nabla^{g}} \otimes e$ 
on $\mathbb{H}P^{n}$, where $\nabla^{g}$ is the Levi-Civita connection of $g$  
and $X$ generates the $S^{1}$-action. 
The vector field $\hat{X}$ which generates   
the lifted action to $\mathbb{H}^{n+1} \setminus \{ 0 \}$ is given by 
\begin{align*}
\hat{X}_{(z_{1},\dots, z_{n+1})} 
&= (iz_{1},\dots, iz_{n+1})=(iu_{1},\dots, iu_{n+1},-iv_{1},\dots,-iv_{n+1}) \\
&= \frac{i}{2} \sum_{l=1}^{n+1} \left( u_{l} \frac{\partial}{\partial u_{l}} - \bar{u}_{l} \frac{\partial}{\partial \bar{u}_{l}}
-v_{l} \frac{\partial}{\partial v_{l}} + \bar{v}_{l} \frac{\partial}{\partial \bar{v}_{l}} \right). 
\end{align*}
By Lemmas \ref{qh_moment}, \ref{5_14} and Proposition \ref{5_15}, the zero set of $\hat{\mu}$ coincides with 
the zeros of 
$\theta(\hat{X})=(\theta_{1}(\hat{X}),\theta_{2}(\hat{X}),\theta_{2}(\hat{X}))$, where 
$(\theta_{1},\theta_{2},\theta_{3})$ is the connection form of the ${\rm SO(3)}$-bundle $S(M)$ induced from $\nabla^{g}$:
\begin{align*} 
\theta_{1} &= 
r^{-2} {\rm Im} (\sum_{l=1}^{n+1} \bar{u}_{l} du_{l} + \bar{v}_{l} dv_{l}), \\
\theta_{2} &= 
r^{-2} {\rm Re} (\sum_{l=1}^{n+1} {u}_{l} dv_{l} - v_{l} du_{l}), \\
\theta_{3} &= 
r^{-2} {\rm Im} (\sum_{l=1}^{n+1} {u}_{l} dv_{l} - v_{l} du_{l}),
\end{align*}
where $r^{2}=\sum_{l=1}^{n+1} (\| u_{l} \|^{2} + \| v_{l} \|^{2})$. Then the zero set of $\hat{\mu}$ is given by
\[ \hat{\mu}^{-1}(0)=\{ (u,v) \in \mathbb{C}^{n+1} \oplus \mathbb{C}^{n+1} \mid 
    {}^{t} u\,v=0 \,\,\, \mbox{and} \,\,\, \| u \| = \| v \|  \}. \] 
The fixed-point set of the $S^{1}$-action on $\mathbb{H}P^{n}$ is $\mathbb{C}P^{n}$, which corresponds to 
$V^{\prime}:=\{ (u,0) \in \mathbb{C}^{n+1} \oplus \mathbb{C}^{n+1} \mid u \neq 0 \} 
\subset \mathbb{H}^{n+1} \setminus \{ 0 \}$. 
By Lemma \ref{5_11}, a connected component containing $V^{\prime}$ 
of the compliment of the zeros $\hat{\mu}^{-1}(0)$ is 
\[ \{ (u,v) \in \mathbb{C}^{n+1} \oplus \mathbb{C}^{n+1} \mid  {}^{t}u\,v=0 \,\,\, 
\mbox{and} \,\,\, \| u \| - \| v \| >0 \},  \]
which is 
$\mathrm{SO}^{\ast}(2n+2)/\mathrm{SO}^{\ast}(2n) \times \mathrm{SO}^{\ast}(2)$. 
In fact, using the change of coordinates 
\begin{align*}
z_{i} =\frac{1}{2} (u_{i}+v_{i}), \,\,\, z_{n+1+i}=-\frac{\sqrt{-1}}{2} (u_{i}-v_{i}) \,\,\, (i=1,\dots,n+1),
\end{align*}
we have 
\begin{align*}
{}^{t} z \, z =  {}^{t}u\,v, \,\,\, \| u \|^{2}=\| z \|^{2}+2 \sum_{i=1}^{n+1} \mathrm{Im} \, z_{i}\bar{z}_{n+1+i}, \,\,\,
 \| v \|^{2}=\| z \|^{2}-2 \sum_{i=1}^{n+1} \mathrm{Im} \, z_{i} \bar{z}_{n+i}. 
\end{align*}
See \cite[Proposition 3.8 and Theorem 3.9]{P} and \cite[Example 2]{Hit}. 
Therefore we summarize as follows.

\begin{example}[Pontecorvo's example \cite{P}]\label{ex_qp1}
{\rm 
The manifold
\[
M'=\mathrm{SO}^{\ast}(2n+2)/\mathrm{SO}^{\ast}(2n) \times \mathrm{SO}^{\ast}(2)
\]
is a quaternionic complex manifold with the twistor function 
$\mu=f_{Q}^{\nabla^{g}}|_{M'}$. The induced compatible complex structure $I$ by $\mu$ 
is not locally hypercomplex. 
Suppose that for any $x \in M^{\prime}$ there exists a neighborhood $U$ of $x$ 
on which $(I_{1}=I,I_{2},I_{3})$ defines a hypercomplex structure. 
Then there exists the Obata connection $\nabla^{0}$ on $U$ 
associated with $(I_{1}=I,I_{2},I_{3})$. 
Since $\nabla^{\mu}=\nabla^{0}$ on $U$ for each $x \in M^{\prime}$, the $\mu$-connection induces 
a flat connection on $Q$. 
As $M^{\prime}$ is simply connected, $I_{2}$ and $I_{3}$ extend to $M^{\prime}$ 
by the parallel translation with respect to $\nabla^{\mu}$. 
Hence the extended frame $(I_{1},I_{2},I_{3})$ defines a hypercomplex structure on $M^{\prime}$, 
contradicting Theorem \ref{HP_Fix}. 
In \cite{P}, it is shown that the holonomy group of the canonical connection 
of the (pseudo-Riemannian) symmetric space $M'$ is not contained in 
$\mathrm{SL}(n,\mathbb{H})$, which also implies that 
$(I_{1}=I,I_{2},I_{3})$ is not locally hypercomplex. 
See also \cite[Example 3.10]{CCG}.
}
\end{example}

\subsection{The complex two-plane Grassmann manifold}\label{sec_weight_grass}

We present a further example obtained as the quaternionic complex quotient of $\mathbb{H}P^{n}$. 
The example given here is not quaternionically flat, but by using the geometry of fixed-point sets 
(Proposition \ref{QK_Fix})
similar to the previous section, we can show the non-existence of a hypercomplex structure. 
We refer to \cite{J} for the quaternionic complex quotient and for 
interesting four-dimensional examples. 

Consider the $S^{1}$-action 
\eqref{act_4_18} and 
the twistor function $\mu$ on $\mathbb{H}P^{n}$ as in Example \ref{ex_qp1}. 
Then it holds that 
\begin{align*}
\mu^{-1}(0)=\{ [z]=[u+jv] \in \mathbb{H}P^{n} \mid {}^{t} u\,v=0 \,\,\, \mbox{and} \,\,\, \| u \| = \| v \|  \}.
\end{align*}
and
\begin{align*}
\mu^{-1}(0) / S^{1}=Gr(2,n+1),
\end{align*}
where $Gr(2,n+1)$ is the complex two-plane Grassmann manifold. 
Note that the quaternionic structure on $Gr(2,n+1)$ is not flat, 
that is, $W \neq 0$, 
as follows from the curvature formulas in \cite{Ber}.
We consider the twistor function $\mu^{\circ}$ on $\mathbb{H}P^{n}$ 
whose lifted map $\hat{\mu}^{\circ}$ to the Swann bundle 
is given by 
$\hat{\mu}^{\circ}_{1}=\| u_{1} \|^{2}-\| v_{1} \|^{2}$ and 
$\hat{\mu}^{\circ}_{2}+i \mu^{\circ}_{3} = 2i u_{1} v_{1}$. 
Since $\mu^{\circ}$ is invariant 
under the action \eqref{act_4_18}, $\mu^{\circ}$ descends to a twistor function on the quotient space $Gr(2,n+1)$. 
Therefore $\mathbb{H}P^{n} \setminus \{ [z] \mid z_{1}=0 \}$ is a quaternionic complex manifold 
with the twistor function $\mu^{\circ}$. 
In particular, on $\mu^{-1}(0) \setminus \{ [z] \in \mathbb{H}P^{n} \mid z_{1}=u_{1}+jv_{1} = 0 \}$, 
the twistor function $\mu^{\circ}$ has no zero points. 
Note that the quotient of  
$(\mu^{-1}(0) \setminus \{ [z] \in \mathbb{H}P^{n} \mid z_{1}= 0 \})$ by the $S^{1}$-action 
\eqref{act_4_18}
is a quaternionic complex quotient space 
with the twistor function induced by $\mu^{\circ}$ 
in the sense of \cite{J}. 
It is known that there is no compatible almost complex structure on $Gr(2,n+1)$. 
However, there exists a compatible complex structure on 
\begin{align*}
Gr(2,n+1) \setminus Gr(2,n). 
\end{align*}
Indeed, this open subset carries the quaternionic complex structure 
induced by $\mu^{\circ}$. 
Nevertheless, this compatible complex structure cannot be extended 
to a hypercomplex structure as Example \ref{ex_gr1}. 
To see this, we show the following proposition in the compact setting 
in contrast with the case of $T^{\ast} \mathbb{C}P^{n}$
endowed with the Calabi metric (see Section \ref{sec_mu_conn}).

\begin{proposition}\label{QK_Fix}
Let $(M,Q,g)$ be a compact quaternionic K\"ahler manifold 
with non-zero scalar curvature. 
Suppose that $S^{1}$ acts on $M$ isometrically, 
and let $F$ be a connected component of the fixed-point set 
such that $f_{Q} \neq 0$ and $F \cap \mu^{-1}(0)=\emptyset$, 
where $\mu$ is the quaternionic K\"ahler moment map associated with the $S^{1}$-action. 
Then $M \setminus \mu^{-1}(0)$ 
does not admit a hypercomplex structure $(I_{1}=I,I_{2},I_{3})$
on any open set $U$ containing $F$. 
In particular, if $\dim F \geq 4$, then the ${\rm SO}(3)$-bundle $S(U)$ is not trivial 
for any open set $U \subset M \setminus \mu^{-1}(0)$ with $F \subset U$. 
Consequently, if $\dim F \geq 4$, then 
a connected component of $M \setminus \mu^{-1}(0)$ containing $F$ with $f_{Q} \neq 0$ admits no hypercomplex structure $H$ 
such that $Q=\langle H \rangle$.
\end{proposition}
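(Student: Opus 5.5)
The plan is to replace the flatness hypothesis of Theorem \ref{q_flat_fix} with quaternionic K\"ahler geometry, and to get nonvanishing Chern classes of $F$ from its intrinsic K\"ahler geometry rather than from a curvature identity on $M$.

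First I set up the $\mu$-connection on $M' := M \setminus \mu^{-1}(0)$. Since $\mu$ is the quaternionic K\"ahler moment map, the remark after Lemma \ref{Lie_der_I} gives
\[ \mu = a f_{Q}^{\nabla^{g}}, \]
with $a$ a nonzero multiple of the scalar curvature. Hence the induced complex structure $I$ satisfies $L_X I=0$. By Corollary \ref{ex_m_map}, $I=\pm f_Q/\|f_Q\|$ on $F$. By Corollary \ref{inv_mu_conn}, $\nabla^{\mu}$ is $S^{1}$-invariant. So Lemma \ref{fix_point} applies to $\nabla^{\mu}$: $F$ is $\nabla^{\mu}$-totally geodesic and transversally complex by Lemma \ref{cpx_submfd}. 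Corollary \ref{3.22} then says the connection induced on $F$ by $\nabla^{\mu}$ equals the Levi-Civita connection of $(F,\iota^*g)$. This metric is K\"ahler with respect to $I|_F$, because $\nabla^{\mu}I=0$ restricts.

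Next, suppose a hypercomplex structure $(I_1=I,I_2,I_3)$ exists on some open $U\supset F$ in $M'$. By Proposition \ref{local_obst}, $Ric^{\nabla^{\mu}}=0$ on $U$, so every $\Omega^{\nabla^{\mu}}_\alpha$ vanishes there. I would then repeat the computation in the proof of Theorem \ref{q_flat_fix}, but without assuming $W=0$. Along $F$ the tensor $R^{\nabla^{\mu}}$ equals $R^{\nabla^{0}}$, which commutes with $Q$. The tangential projection in Remark \ref{rem_para} then gives an induced curvature on $F$ that is the curvature of the Levi-Civita connection of $F$.

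To reach a contradiction I use the quaternionic K\"ahler structure instead of flatness. For the Levi-Civita connection $\nabla^{g}$ on a totally geodesic $F$, the curvature restricted to $F$ is
\[ R^{g}=\tfrac{\mathrm{scal}}{4n(n+2)}R_{\mathbb{H}P^n}+W^{g}. \]
Remark \ref{rem_para} lets us project it onto $TF$. By Theorem \ref{thm_1a}/Corollary \ref{3.22}, $F$ with the metric $\iota^{*}g$ is the same Riemannian manifold whether we view it through $\nabla^{g}$ or $\nabla^{\mu}$. Along $F$ we have $R^{\nabla^\mu}=R^{\nabla^0}$, so the previous paragraph makes $\mathrm{Tr}(I_1R^{F})=0$, i.e. the Ricci form of the K\"ahler manifold $F$ vanishes. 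I would then compute the Ricci tensor of $F$ directly from the QK formula. Known results (cf. \cite{B}, and the half-dimensional case in \cite{Tsu}) say that a totally geodesic transversally complex fixed component of a QK manifold with $\mathrm{scal}\neq0$ is K\"ahler–Einstein with Einstein constant a nonzero multiple of $\mathrm{scal}$. That contradicts $Ric^F=0$, and so $c_1(F)\neq0$ as a de Rham class, because $F$ is compact (closed in compact $M$).

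For the second assertion, $F$ is K\"ahler–Einstein with nonzero constant, so $c_1(F)=\lambda[\omega_F]$ with $\lambda\neq0$. When $\dim F\ge4$, compactness gives $c_1(F)^2=\lambda^2[\omega_F^2]\neq0$. Proposition \ref{a113} together with the Chern-class formula of Theorem \ref{chern_mu_conn} (restricted to $F$, where $\iota^*c_1(M')$ is proportional to $c_1(F)$ by the same projection computation) makes $\iota^*[\Theta^{\nabla^\mu}]\neq0$. So $S(U)$ is nontrivial, and no global $H$ with $\langle H\rangle=Q$ exists on the component.

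The main obstacle is the Einstein step: showing $Ric^{F}$ is a nonzero multiple of $\mathrm{scal}\cdot\iota^*g$ using only $W^g$ and the parallel splitting $V=TF\oplus I_2TF$. I would try to show that the trace $\mathrm{Tr}(I_1W^{g}_{X,Y}|_{TF})$ vanishes, using that $W^g$ commutes with $Q$ and $V$ is $Q$-invariant. The remaining $\mathbb{H}P^n$-type term then gives the factor $(m+1)$, as in equation \eqref{4_10_ric}.
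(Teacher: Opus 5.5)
There is a genuine gap. Your contradiction rests on two claims, and neither holds once $W\neq 0$.

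\textbf{(i) A hypercomplex structure near $F$ does not make the Ricci form of $F$ vanish.} Suppose $(I_1=I,I_2,I_3)$ is hypercomplex near $F$. For $X,Y\in TF$, $R^{\nabla^0}_{X,Y}$ preserves $TF$, because $F$ is $\nabla^0=\nabla^\mu$-totally geodesic. It commutes with $Q$, so it also preserves $I_2TF$. Since $I_2$ anticommutes with $I_1$, you get $\mathrm{Tr}_{I_2TF}(I_1R^{\nabla^0}_{X,Y})=-\mathrm{Tr}_{TF}(I_1R^{F}_{X,Y})$. So $Q$-commutation only gives the vacuous identity $\mathrm{Tr}_V(I_1R^{\nabla^0}_{X,Y})=0$. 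Adding $Ric^{\nabla^\mu}=0$ only reduces $R^F$ to the tangential part of $W$, the same uncontrolled term. The paper's own $T^{\ast}\mathbb{C}P^n$ (Calabi) example shows the implication fails in general: a hypercomplex structure containing $I$ exists around a fixed component with $c_1(F)\neq 0$.

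\textbf{(ii) Fixed components need not be K\"ahler--Einstein, so your resolution of the ``main obstacle'' is false.} Take $Gr(2,5)$ with the isometric action of weights $(p,p,q,q,q)$, $p\neq q$. The component $F=\{\ell_1\oplus\ell_2 : \ell_1\subset\mathbb{C}^2,\ \ell_2\subset\mathbb{C}^3\}\cong\mathbb{C}P^1\times\mathbb{C}P^2$ has $f_Q\neq 0$ and $\dim F=6$. It carries the product of Fubini--Study metrics with equal holomorphic sectional curvature $H$, so its Ricci tensor is $H$ on one factor and $\tfrac{3}{2}H$ on the other. Since $I_2TF\perp TF$, the tangential part of $R^{B^{\nabla^g}}$ is a complex-space-form curvature. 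Hence the tangential trace of $I_1W$ cannot vanish on this $F$. The same example has $c_1(F)=2h_1+3h_2$, which is not proportional to $[\omega_F]\propto h_1+h_2$. So the proportionality between $\iota^{\ast}c_1(M\setminus\mu^{-1}(0))$ and $c_1(F)$ that you use for the second assertion holds only in the flat case, Theorem \ref{q_flat_fix}.

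\textbf{The missing idea is that you never have to control $W$.} Both $\nabla^g$ and $\nabla^\mu$ are $S^1$-invariant, so $F$ is totally geodesic for both. By Corollary \ref{3.22} they induce the same connection on $F$, hence $R^{\nabla^g}_{X,Y}Z=R^{\nabla^\mu}_{X,Y}Z$ for $X,Y,Z\in TF$. Since $W=R^{\nabla}-R^{B^{\nabla}}$ is independent of the connection, $W$ cancels, and the tangential parts of $R^{B^{\nabla^g}}$ and $R^{B^{\nabla^\mu}}$ coincide. Use $B^{\nabla^\mu}=\frac{1}{4n}Ric^{\nabla^\mu}$ (Corollary \ref{cor_42}) and $B^{\nabla^g}=\frac{1}{4(n+2)}Ric^{\nabla^g}$ (from the Einstein condition). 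A trace over $TF$ then gives $n\,Ric^{\nabla^g}=(n+2)\,Ric^{\nabla^\mu}$ on $TF$. By Theorem \ref{chern_mu_conn}, $\iota^{\ast}c_1(M\setminus\mu^{-1}(0))=\frac{\tau}{8\pi(n+2)}[\omega_F]$. This class is nonzero because $F$ is compact K\"ahler, and its square is nonzero when $\dim F\geq 4$. The obstruction lives in this restricted ambient class, not in $c_1(F)$. Lemma \ref{hyp_chern} and Proposition \ref{a113} then give both assertions. In your contradiction format, $Ric^{\nabla^\mu}=0$ on $U$ would force $Ric^{\nabla^g}|_{TF}=0$, contradicting $\tau\neq 0$ directly, with no Einstein property of $F$ needed.
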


\begin{proof}
Let $\nabla^{g}$ be the Levi-Civita connection of $g$. 
Since connection $\nabla^{g}$ and $\nabla^{\mu}$ are quaternionic connections with respect to $Q$, we have 
\[ R^{\nabla^{g}}-R^{B^{\nabla^{g}}}=R^{\nabla^{\mu}}-R^{B^{\nabla^{\mu}}}(=W). \]  
By Corollary \ref{3.22}, we see that $\nabla^{g}_{X}Y=\nabla^{\mu}_{X}Y \in \Gamma(TF)$ for any 
$X$, $Y \in \Gamma(TF)$. 
Therefore, it holds that 
\[ p(R^{B^{\nabla^{g}}}_{X,Y}Z)=p(R^{B^{\nabla^{\mu}}}_{X,Y}Z) \]
for any $X$, $Y$, $Z \in \Gamma(TF)$
from Remark \ref{rem_para}, where $p$ is the projection onto the tangent component.  
Using Corollary \ref{cor_42}, we have
\begin{align*}
   & 4n \big( Ric^{\nabla^{g}}(Y,Z)X-Ric^{\nabla^{g}}(X,Z)Y
   -Ric^{\nabla^{g}}(Y,I_{1}Z)I_{1}X+Ric^{\nabla^{g}}(X,I_{1}Z)I_{1}Y \\
   &-2 Ric^{\nabla^{g}}(I_{1}X,Y)I_{1}Z \big) \\
 =& 4(n+2) \big( Ric^{\nabla^{\mu}}(Y,Z)X-Ric^{\nabla^{\mu}}(X,Z)Y-2Ric^{\nabla^{\mu}}(I_{1}X,Y)I_{1}Z \\ 
   &  -Ric^{\nabla^{\mu}}(Y,I_{1}Z)I_{1}X +Ric^{\nabla^{\mu}}(X,I_{1}Z)I_{1}Y \big)
\end{align*}
$X$, $Y$, $Z \in \Gamma(TF)$. Taking the trace in the slot of $X$, we have 
\begin{align}\label{4_20_ric}
n Ric^{\nabla^{g}}(Y,Z)=(n+2)Ric^{\nabla^{\mu}}(Y,Z), 
\end{align}
which means that 
\[  \iota^{\ast} c_{1}(M \setminus \mu^{-1}(0))=\frac{\tau }{8 \pi (n+2)} [ \omega_{F} ] \]  
where $\tau$ is the scalar curvature of $g$, $\omega_{F}$ is the K\"ahler form on $F$   
and $\iota:F \to M \setminus \mu^{-1}(0)$ is the inclusion. 
Since $F$ is a compact K\"ahler manifold due to \cite[Proposition 3.5]{B}, then $c_{1}(M \setminus \mu^{-1}(0)) \neq 0$. 
\end{proof}

\begin{remark}
{\rm 
Proposition \ref{local_obst} implies that for each point 
$x \in F$, any neighborhood $U \subset M$ of $x$
does not admit a hypercomplex structure $(I_{1}=I,I_{2},I_{3})$ on $U$
by \eqref{4_20_ric} provided that the $S^{1}$-action is isometric and 
$\tau \neq 0$. 
}
\end{remark}

\begin{remark}
{\rm 
If $(M,Q,g)$ is a simply connected compact quaternionic K\"ahler manifold 
with non-zero scalar curvature and $\dim M >4$ which is not $\mathbb{H}P^n{}$, then 
any quaternionic $S^{1}$-action is isometric \cite{AM2}. Since the action 
\eqref{act_4_18} on $\mathbb{H}P^{n}$ is isometric, 
the argument in Example \ref{ex_qp1} also follows from Proposition \ref{QK_Fix}.
}
\end{remark}

%
%
%

We now present examples illustrating the obstruction 
to the existence of a hypercomplex structure 
of the form $(I_{1}=I,I_{2},I_{3})$.

\begin{example}\label{ex_gr1}
{\rm 
The manifold $Gr(2,n+1) \setminus Gr(2,n)$ 
carries a quaternionic complex structure induced by 
the twistor function $\mu^{\circ}$. 
Note that $Gr(2,n)$ is the $S^{1}$-quotient of 
$\mu^{-1}(0) \cap \{ [z] \in \mathbb{H}P^{n} \mid z_{1}=0 \}$. 
Consider the $S^{1}$-action as in Example \ref{cpx_gr}. 
Then $\mu^{\circ}$ is $S^{1}$-invariant, and the fixed-point set 
contained in $Gr(2,n+1) \setminus Gr(2,n)$ is $\mathbb{C}P^{n-1}$. 
By Proposition \ref{QK_Fix}, 
$Gr(2,n+1) \setminus Gr(2,n)$ does not admit 
a hypercomplex structure $(I_{1}=I,I_{2},I_{3})$ 
on any open set $U$ containing $\mathbb{C}P^{n-1}$.
}
\end{example}



Finally, 
in order to describe the cotangent bundle $T^{\ast} \mathbb{C}P^{n-1}$ of the complex projective space 
as a quaternionic complex quotient, 
we consider another $S^{1}$-action 
\begin{align}\label{action}
e^{i \theta} \cdot [z] 
&:=[z_{1} : e^{i \theta} z_{2} : \cdots: e^{i \theta} z_{n+1}] 
\end{align}
and 
the twistor function $\mu$ on $\mathbb{H}P^{n}$ as in Example \ref{ex_qp1}.

\begin{example}\label{ex_gr2}
{\rm 
The singular set of the quotient space $\mu^{-1}(0)/_{(0,1,\dots,1)} S^{1}$ by the action \ref{action} is $Gr(2,n)=(\mu^{-1}(0) \cap \{ [z] \mid z_{1}=0 \})/ S^{1}$. 
Consider 
$\mathbb{C}^{n} \times \mathbb{C}^{n}$ defined by $(z,w) \mapsto (\lambda z, \bar{\lambda} w)$ 
for $\lambda \in S^{1}$, and the subset 
\[
{\cal L}:=\{ (z, w) \in \mathbb{C}^{n} \times \mathbb{C}^{n} 
\mid \| z \|^{2} - \| w \|^{2} - 1 = 0,\; 
\langle z,w \rangle = 0 \}.
\]
It is well known that 
$T^{\ast} \mathbb{C}P^{n-1} = {\cal L} / S^{1}$ 
(see \cite{Hit2}). 
We define the following maps:
\begin{align*}
& \iota: {\cal L} \to (\mathbb{C}^{n+1} \setminus \{0\}) \times (\mathbb{C}^{n+1} \setminus \{0\}); 
\quad (z,w) \mapsto ((1,w), (0,z)),  \\
& f:(\mathbb{C}^{n+1} \setminus \{0\}) \times (\mathbb{C}^{n+1} \setminus \{0\}) 
\to \mu^{-1}(0) ;
\quad (u,v) \mapsto [u + jv]. 
\end{align*}
Note that 
$\mathrm{Im}(f \circ \iota) 
= \mu^{-1}(0) 
\setminus \{ [z] \in \mathbb{H}P^{n} \mid z_{1}= 0 \}.$
We now define 
\[
\phi : {\cal L} / S^{1} \to \mu^{-1}(0)/_{(0,1,\dots,1)} S^{1}
\]
by $\phi([(z,w)]) := p(f(\iota(z,w)))$, where $p:\mu^{-1}(0) \to \mu^{-1}(0)/_{(0,1,\dots,1)} S^{1}$ 
is the quotient map. 
Then $\phi$ is well-defined and injective. 
Clearly,
$\mathrm{Im}\,\phi 
= (\mu^{-1}(0)/_{(0,1,\dots,1)} S^{1}) \setminus Gr(2,n)$
and 
$\phi(\{ 0 \text{-section} \}) 
\cong \mathbb{C}P^{n-1}$.
Therefore, the cotangent bundle 
\[ T^{\ast} \mathbb{C}P^{n-1} 
= (\mu^{-1}(0)/_{(0,1,\dots,1)} S^{1}) \setminus Gr(2,n)
\]
carries a quaternionic complex structure induced by 
the twistor function $\mu^{\circ}$ (this is a quaternionic complex quotient).   
Considering the $S^{1}$-action given by the scalar multiplication on the fibers,   
the fixed-point set of this action is $0$-section, which is $\mathbb{C}P^{n}$. 
}
\end{example}

%
%


\noindent
Kazuyuki Hasegawa \\
Faculty of teacher education \\
Institute of human and social sciences \\
Kanazawa university \\
Kakuma-machi, Kanazawa, \\
Ishikawa, 920-1192, Japan. \\
e-mail:kazuhase@staff.kanazawa-u.ac.jp


\begin{thebibliography}{99}


\bibitem{AM2}
D. Alekseevsky and S. Marchiafava, 
{\it Gradient quaternionic vector fields and a
characterization of the quaternionic projective space}, 
ESI 138 (1994).  


\bibitem{AM1}
D. Alekseevsky and S. Marchiafava,
{\it Quaternionic structures on a manifold and subordinated structures}, 
Ann. Mat. Pura Appl. (4) 171 (1996), 205-273. 


\bibitem{AMP}
D. Alekseevsky, S. Marchiafava, M. Pontecorvo,
{\it Compatible complex structures on almost quaternionic manifolds}, 
Trans. Amer. Math. Soc. 351 (1999), 997–1014.


\bibitem{B}
F. Battaglia, {\it Circle actions and Morse theory on quaternion-K\"ahler manifolds}, 
J. London Math. Soc. (2) 59 (1999), 345–358. 


\bibitem{Berger}
M. Berger, 
{\it Sur les groupes d'holonomie homogène des variétés à connexion affine et des variétés riemanniennes}, 
Bull. Soc. Math. France 83 (1955), 279–330.


\bibitem{Ber}
J. Berndt, 
{\it Riemannian geometry of complex two-plane Grassmannians}, 
Rend. Sem. Mat. Univ. Politec. Torino 55 (1997), 19–83.


\bibitem{BC}
A. Bor{\' o}wka and D. Calderbank, 
{\it Projective geometry and the quaternionic Feix-Kaledin construction}, 
Trans. Amer. Math. Soc. 372 (2019), 4729-4760.


\bibitem{Bor}
A. Bor{\' o}wka, 
{\it Complex quaternionic manifolds and  Projective geometry and C-projective structure}, 
Complex manifolds 12 (2025).  


\bibitem{Bre}
G. E. Bredon, Introduction to compact transformation groups, 
Pure and Applied Mathematics, Vol. 46. Academic Press, New York-London, 1972.





\bibitem{CCG}
I. Chrysikos, V. Cort{\' e}s and J. Gregorovi\v{c}, 
{\it Curvature of quaternionic skew-Hermitian manifolds and bundle constructions}.
Math. Nachr. 298 (2025), 87–112.

\bibitem{CH}
V. Cort{\' e}s and K. Hasegawa, 
{\it The quaternionic/hypercomplex-correspondence}, 
Osaka J. Math. 58 (2021), 213-238. 
\bibitem{Fu}
S. Fujimura, 
{\it $Q$-connections and their changes on an almost quaternion manifold}, 
Hokkaido Math. J. {\bf 5} (1976), 239-248. 




%


\bibitem{Hit}
N. Hitchin,
{\it Manifolds with holonomy  $\mathrm{U}^{\ast}(2m)$}, Rev. Mat. Complut. 27 (2014), 351–368.

\bibitem{Hit2}
N. Hitchin, 
{\it Metrics on moduli spaces}, 
Contemp. Math., 58, American Mathematical Society, 1986, 157–178.

\bibitem{J}
D. Joyce, 
{\it The hypercomplex quotient and the quaternionic quotient}, 
Math. Ann. 290 (1991), 323-340. 

\bibitem{K}
S. Kobayashi, Transformation groups in differential geometry, Springer Berlin, 1972. 


\bibitem{MS}
S. Merkulov and L.Schwach{\"o}fer, 
{\it Classification of irreducible holonomies of torsion-free affine connections}, 
Ann. of Math. (2) 150 (1999), 77–149.

\bibitem{O}
B. O'Neill, Semi-Riemannian geometry, 
Pure and Applied Mathematics, 103. Academic Press, Inc., New York, 1983.
 
\bibitem{PPS}
H. Pedersen, Y. Poon and A. Swann, 
{\it Hypercomplex structures associated to quaternionic manifolds}, 
Differential Geom. Appl. {\bf 9} (1998), 273-292.
 
 
 
 
\bibitem{P} 
M. Pontecorvo, 
{\it Complex structures on quaternionic manifolds}, 
Differential Geom. Appl. 4 (1994), 163-177.

\bibitem{P2} 
M. Pontecorvo, 
{\it On twistor spaces of anti-self-dual Hermitian surfaces}, 
Trans. Amer. Math. Soc. 331 (1992), 653–661.
 
\bibitem{S} 
S. Salamon, 
{\it Differential geometry of quaternionic manifolds}, 
Ann. Scient. {\' E}c. Norm. Sup. 19 (1986), 31-55.

\bibitem{Tsu}
K. Tsukada, 
{\it The Gauss maps of transversally complex submanifolds of a quaternion projective space}, 
Tohoku Math. J. (2) 73 (2021), 1–28. 
\end{thebibliography}
\end{document}